\documentclass[a4paper,11pt]{article}
\usepackage[margin=1in]{geometry}
\usepackage[T1]{fontenc}
\usepackage{amsthm,amsfonts,amssymb,amsmath}
\usepackage{microtype}
\usepackage{graphicx}
\usepackage{xcolor}
\usepackage{enumitem}
\usepackage[nocompress]{cite}

\usepackage[colorlinks=true,urlcolor=blue,linkcolor=blue,citecolor=blue]{hyperref}

\newtheorem{Theorem}{Theorem}[section]
\newtheorem*{Itheorem}{Main results (informal summary)}
\newtheorem{Lemma}[Theorem]{Lemma}
\newtheorem{Proposition}[Theorem]{Proposition}	
\newtheorem{Corollary}[Theorem]{Corollary}	
\theoremstyle{definition}
\newtheorem{Remark}[Theorem]{Remark}

\numberwithin{equation}{section}

\newcommand{\C}{\mathbb{C}} 
\newcommand{\R}{\mathbb{R}} 
\newcommand{\Q}{\mathbb{Q}} 
\newcommand{\N}{\mathbb{N}} 
\newcommand{\per}{\textup{per}}
\renewcommand{\Re}{\operatorname{Re}}
\renewcommand{\Im}{\operatorname{Im}}
\newcommand{\iu}{\mathrm{i}}
\newcommand{\eu}{\mathrm{e}}
\newcommand{\eps}{\varepsilon}
\newcommand{\ub}{\mathbf{u}}
\newcommand{\vb}{\mathbf{v}}
\newcommand{\wb}{\mathbf{w}}
\newcommand{\El}{\mathcal{L}}
\newcommand{\de}{\mathrm{d}}
\newcommand{\A}{\mathcal{A}}
\newcommand{\ubu}{\underline{\mathbf{u}}}
\newcommand{\unu}{\underline{u}}

\DeclareMathOperator{\sign}{sign}

\DeclareMathOperator{\sech}{sech}

\allowdisplaybreaks[3]

\title{Dissipative bright and dark pulses under periodic forcing}
\author{Lukas Bengel$^*$ and Bj\"orn de Rijk\thanks{Department of Mathematics, Karlsruhe Institute of Technology, Englerstra\ss e 2, 76131 Karlsruhe, Germany; \texttt{lukas.bengel@kit.edu}, \texttt{bjoern.de-rijk@kit.edu}}}

\begin{document}

\maketitle

\begin{abstract} 
We analyze the existence and stability of bright and dark pulses in a damped nonlinear Schr\"odinger (NLS) equation with periodic forcing. This Lugiato--Lefever equation is a canonical model in nonlinear optics describing pattern formation in a dissipative Kerr cavity driven by a bichromatic laser pump. Bifurcating from the bright and black NLS solitons, we rigorously construct bright and dark single-pulse solutions to the Lugiato--Lefever equation. Using a recently developed toolbox for concatenating and periodically extending pulse solutions in spatially periodic systems, we then obtain periodic multipulse solutions corresponding to experimentally observed broad-bandwidth frequency combs. While the existence and stability of the bright pulses follow from standard Lyapunov--Schmidt reduction and Krein index arguments, the construction of the dark pulses is substantially more delicate because the nonzero asymptotic states of the black NLS soliton yield neutral essential spectrum of the linearization. By carefully tracking the associated small spatial Floquet exponents along the bifurcation and employing exponential dichotomies, we establish the existence of dark pulse solutions. These consist of two domain walls connected by a long plateau with critical absolute spectrum. To our knowledge, this constitutes the first rigorous bifurcation result in the Lugiato--Lefever equation from the black NLS soliton.

\paragraph*{Keywords.} Lugiato--Lefever equation, periodic coefficients, pulse solutions, stability, spectral analysis, bifurcation theory\\
\textbf{Mathematics Subject Classification (2020).} Primary, 34C23, 35B32, 35C08; Secondary, 34C37, 35B10, 35Q55 
\end{abstract}

\section{Introduction}\label{sec:intro}

We investigate the existence and stability of bright and dark pulse solutions to the Lugiato--Lefever equation with periodic forcing,
\begin{align}\label{eq:LLE}
	\iu u_t = - d u_{\xi\xi} + (\zeta - \gamma\iu) u - |u|^2 u + \iu F(\xi - \omega t),
\end{align}
where $u \colon \R \times \R \to \C$ is a complex-valued function, $F \colon \R \to \C$ is a smooth periodic function, and the parameters $d \in \{\pm 1\}$, $\omega \in \R$, and $\zeta,\gamma > 0$ represent dispersion, phase velocity, detuning, and damping, respectively. The case $d = 1$ corresponds to the anomalous dispersion (focusing) regime, whereas $d = -1$ corresponds to the normal dispersion (defocusing) regime. 

Equation~\eqref{eq:LLE} arises as a model in nonlinear optics for frequency-comb generation in dissipative Kerr ringresonators driven by a bichromatic laser pump, see~\cite{Gasmi2023Bandwidth,Hansson2014Bichromatic} and Remark~\ref{rem:derivation}, or by more general spatially periodic laser pumps~\cite{Sun2025,herr2026frequencycombs}. Frequency combs are periodic optical signals whose frequency spectrum is composed of a series of equally spaced spectral lines. Combs with broad frequency spectrum have revolutionized precision frequency measurements, enabling applications in optical communications~\cite{Marin-Palomo2017}, broadband gas sensing~\cite{Schliesser2005}, spectroscopy~\cite{Coddington2008,Picque2019}, and optical metrology~\cite{Udem2002}. The experimental realization of frequency combs comprised of a multitude of well-separated cavity solitons has sparked significant interest~\cite{Herr2014Temporal,Brasch2016Photonic,Xue2015}. These solitons correspond to ultrashort optical pulses with broad frequency spectrum, which makes soliton-based frequency combs highly attractive for applications~\cite{Marin-Palomo2017,Weiner2017}.

\emph{Soliton-based frequency combs} in the Lugiato--Lefever model~\eqref{eq:LLE} are traveling periodic wave solutions obtained by periodically extending bright and dark pulse solutions; see Figure~\ref{fig:pulses_intro} for an illustration. \emph{Bright and dark pulses} are traveling pulse solutions to~\eqref{eq:LLE} that arise near the nonlinear Schr\"odinger (NLS) limit, obtained by setting the damping $\gamma$ and forcing $F$ equal to $0$. Their profiles approximate the formal concatenation
\begin{align} \label{eq:formal_concat}
\sum_{i = 1}^N \chi_i(x) \phi_{\theta_i}(x-X_i),
\end{align}
representing a superposition of $N \in \mathbb{N}$ solitons $\phi_{\theta_i}$ solving the NLS equation
\begin{align} \label{NLS}
\iu u_t = -d u_{xx} + \zeta u - |u|^2 u.
\end{align}
Here, $X_i \in \R$ with $X_i \neq X_j$ for $i \neq j$ denotes the position of the $i$-th soliton, and the functions $\chi_i \colon \R \to [0,1]$ form a partition of unity, with $\chi_i$ being supported on an interval containing $X_i$ and satisfying $\chi_i(X_i) = 1$ for $i = 1,\ldots,N$. In the anomalous dispersion regime,
\begin{align} \label{bright_sol}
\phi_{\theta}(x) = \sqrt{2\zeta} \sech\left(\sqrt{\zeta}\, x\right) \eu^{\iu \theta}
\end{align}
is the stationary \emph{bright-soliton solution} to~\eqref{NLS} with phase $\theta \in \R$, whereas in the normal dispersion regime
\begin{align} \label{black_sol}
\phi_{\theta}(x) = \sqrt{\zeta} \tanh\left(\sqrt{\tfrac12 \zeta} \, x\right) \eu^{\iu \theta}
\end{align}
is the stationary \emph{black-soliton (or domain-wall) solution} to~\eqref{NLS}. Owing to its nonzero asymptotic states, the phases must satisfy the matching condition $\theta_i + \pi = \theta_{i+1}$ for $i = 1,\ldots,N-1$ in the normal dispersion regime.

\begin{figure}[b]
    \centering
    \includegraphics[width=.85\textwidth]{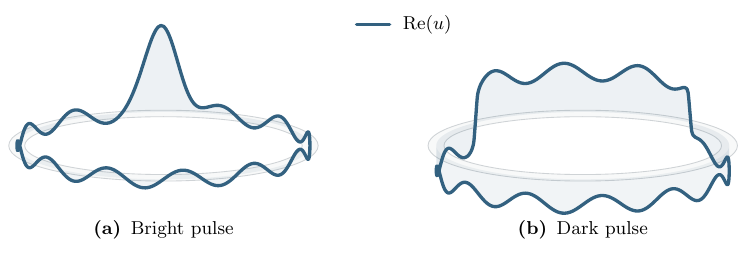}
    \caption{Illustration of soliton-based frequency-comb solution on a ringresonator, consisting of a single bright and dark pulse. The bright pulse is supported on a periodic background wave, while the dark pulse is formed by two domain walls connecting different periodic states.}
    \label{fig:pulses_intro}
\end{figure}

In contrast to standard monochromatic pumping schemes, for which the forcing function $F$ is constant, bichromatic or multi-mode pumping breaks the translational symmetry of the Lugiato--Lefever equation~\eqref{eq:LLE} and leads to pinning of pulse solutions~\cite{BengelReichel2024,Bengel2026Dynamics}, thereby stabilizing the comb repetition rate and facilitating robust (single-)soliton operation~\cite{Taheri2017Optical,Cole2018KerrChirped}.
Moreover, experimental and numerical studies indicate that multi-mode pumping produces combs with broader frequency spectrum, resulting in improved performance in a variety of applications~\cite{Gasmi2023Bandwidth}.

Despite the advantages of multi-mode pumping for the generation of soliton-based frequency combs, the authors are not aware of any mathematically rigorous studies addressing the existence and stability of bright and dark pulse solutions to~\eqref{eq:LLE} when the forcing term $F$ is genuinely periodic. The primary objective of this paper is to fill this gap by constructing traveling single-pulse solutions to~\eqref{eq:LLE} that bifurcate from the bright and black solitons of the NLS equation for general periodic forcing functions $F$. By concatenating the resulting single-pulse solutions, we obtain bright and dark multipulse solutions to~\eqref{eq:LLE}. Extending them periodically, we arrive at soliton-based frequency-comb solutions consisting of an arbitrary number of well-separated, strongly localized, and highly nonlinear pulses within a single periodicity interval, thereby generating broad frequency spectrum. In addition, we perform a spectral stability analysis of the bright pulse solutions, yielding strong nonlinear stability properties with exponential decay of perturbations.

Our main result may be informally summarized as follows.

\begin{Itheorem} Fix parameters $\zeta,T > 0$. Let $F \in C^1(\R,\C)$ be $T$-periodic.
\begin{itemize}
    \item[(i)] \emph{(Bright and dark primary pulses).} Provided an explicit existence and nondegeneracy condition in terms of $T$, $\zeta$, $F$, and $\phi_0$ is satisfied, the Lugiato--Lefever equation~\eqref{eq:LLE} admits bright or dark single-pulse solutions near the NLS limit. These traveling waves propagate with speed $\omega \in \R$, and their profiles are well-approximated by~\eqref{eq:formal_concat}: in the anomalous dispersion regime, $N=1$ and $\phi_{\theta_1}(\cdot-X_1)$ is a shifted and phase-rotated bright soliton~\eqref{bright_sol}, while in the normal dispersion regime, $N=2$ and the components $\phi_{\theta_1}(\cdot-X_1)$ and $\phi_{\theta_2}(\cdot-X_2)$ are two domain walls of the form~\eqref{black_sol}.
    \item[(ii)] \emph{(Bright and dark frequency combs).} For any formal concatenation of an arbitrary number of well-separated bright or dark primary pulse solutions, there exist soliton-based frequency-comb solutions to~\eqref{eq:LLE} with speed $\omega$ whose profiles are well-approximated by this concatenation on a single periodicity interval. Their wavelength may be chosen arbitrarily large.
    \item[(iii)] \emph{(Stability in the anomalous dispersion regime).} Provided a nondegeneracy condition is satisfied, the stability of the bright primary pulses is determined by the eigenvalues of a $(2\times2)$-matrix. Both the nondegeneracy condition and the matrix are given explicitly in terms of $\zeta$, $F$, and $\phi_0$. Moreover, the soliton-based frequency comb is stable if and only if all of its constituent bright primary pulses are stable.
\end{itemize}
\end{Itheorem}

In Section~\ref{sec:main_results}, we present the precise statements of our main results and discuss their implications, their embedding in the mathematical literature, and the proof strategy. In particular, we will see that our results imply that~\eqref{eq:LLE} admits soliton-based frequency combs comprised of ultrashort pulses. Moreover, they provide the first rigorous construction of dark pulses in the Lugiato--Lefever equation bifurcating from the black NLS soliton.

\begin{Remark} \label{rem:derivation}
In its simplest form, the dynamics in a dissipative Kerr ringresonator driven by a bichromatic laser pump is modeled by the equation~\cite{Gasmi2023Bandwidth}
\begin{align*} 
	\iu z_t = -d z_{\xi\xi} - \iu \gamma z - |z|^2 z + \iu \left(f_0\eu^{\iu(k_0 \xi - \omega_0 t) } + f_1\eu^{\iu(k_1 \xi - \omega_1 t) }\right),
\end{align*}
where $z \colon \R \times \R \to \C$ denotes the optical field, $\gamma>0$ is the damping coefficient, $d \in \{\pm 1\}$ is the dispersion coefficient, and $f_{0,1}, k_{0,1},\omega_{0,1}\in\R$ with $k_0 \neq k_1$ are the amplitudes, spatial wavenumbers, and temporal frequencies of the two pump components, respectively, satisfying $dk_0^2-\omega_0>0$. Exploiting the Galilean invariance of the Schrödinger operator together with the gauge invariance of the Kerr nonlinearity, we find that
\begin{align*}
u(\xi,t)=\eu^{-\iu(k_0 \xi +(2dk_0^2-\omega_0)t)}z(\xi+2dk_0t,t)
\end{align*}
satisfies the Lugiato--Lefever equation~\eqref{eq:LLE} with detuning $\zeta=dk_0^2-\omega_0$, phase velocity $\omega=(\omega_1-\omega_0)/(k_1-k_0)$, and periodic forcing function
\begin{align*}
F(x)=f_0+f_1\eu^{\iu(k_1-k_0)x}.
\end{align*}
Thus, the dynamics of a dissipative Kerr ringresonator driven by a bichromatic laser pump is governed by the Lugiato--Lefever equation~\eqref{eq:LLE} with genuinely periodic forcing function $F$.
\end{Remark}

\begin{Remark}
When the periodic forcing function $F$ is given by a snoidal, cnoidal, or dnoidal Jacobi elliptic function, explicit stationary periodic solutions to~\eqref{eq:LLE} that are complex scalar multiples of $F$ were obtained in~\cite{Sun2025}. The dnoidal-wave solutions, which arise in the anomalous dispersion regime, converge to strongly localized bright NLS solitons in the long-wavelength limit. In contrast, the profiles of the snoidal-wave solutions, which arise in the normal dispersion regime, converge in the long-wavelength limit to two domain walls connected by a spatially extended plateau. Consequently, these periodic solutions may be viewed as soliton-based frequency combs with broad frequency spectrum. Numerical investigations in~\cite{Sun2025} indicate that the stability of the snoidal, cnoidal, and dnoidal waves critically depends on the system parameters. The rigorous stability results established in the present paper confirm this behavior for soliton-based frequency combs in the anomalous dispersion regime.
\end{Remark}

\begin{Remark}
In the case of a constant forcing function $F$, soliton-based frequency-comb solutions to~\eqref{eq:LLE} have been rigorously constructed in the anomalous dispersion regime by bifurcating either from periodic dnoidal-wave solutions~\cite{Hakkaev2019Generation} or from bright soliton solutions~\cite{Bengel2025Existence} of the NLS equation. These frequency-comb solutions are orbitally stable with respect to co-periodic perturbations~\cite{Stanislavova2018Asymptotic}, which induce a nontrivial asymptotic phase shift due to the translational invariance of the equation. Moreover, the solutions bifurcating from bright solitons are stable against perturbations of arbitrary wavelength~\cite{Bengel2025Existence,Haragus2024Nonlinear}, whereas those bifurcating from dnoidal waves are unstable with respect to long-wavelength perturbations~\cite{Bengel2025Existence}. To the best of the authors' knowledge, the rigorous construction of soliton-based frequency combs for~\eqref{eq:LLE} in the normal dispersion regime remains an open problem in the case of constant forcing.
\end{Remark}

\paragraph*{Function spaces.} Let $T > 0$. We denote by $H^k(\R)$ and $H_\per^k(0,T)$, with $k \in \mathbb{N}_0$, the Sobolev spaces of complex-valued functions over the field $\mathbb{C}$. The $L^2$-inner product is given by  
$$
\langle u,v \rangle_{L^2} = \int_\R u(x) \overline{v(x)} \de x.
$$  
In~\S\ref{sec:existence_bright}, we consider the function spaces $H_\R^k(\mathbb{R})$ and $H_{\R,\per}^k(0,T)$ with $k \in \mathbb{N}_0$ over the field $\R$. Here, the inner product on the vector space $L_\R^2(\R)$ of complex-valued functions over $\R$ is defined by  
$$
\langle u,v \rangle_{L_\R^2} = \Re \int_\R u(x) \overline{v(x)} \de x.
$$
Note that as sets, the equalities $H^k(\R) = \smash{H_\R^k(\mathbb{R})}$ and $\smash{H_{\per}^k(0,T)}=\smash{H_{\R,\per}^k(0,T)}$ hold, with coinciding norms. The main reason for us to introduce the spaces $\smash{H_\R^k(\mathbb{R})}$ and $\smash{H_{\R,\per}^k(0,T)}$ is, that for $k \in \N$ the function $H^k(\R) \to L^2(\R),\ u \mapsto |u|^2 u$ is not Fr\'echet differentiable whereas $H_\R^k(\R) \to L_\R^2(\R),\ u \mapsto |u|^2 u$ is.

\paragraph*{Outline of paper.} This paper is organized as follows. In Section~\ref{sec:main_results}, we precisely formulate our main results. Section~\ref{sec:existence_bright} is devoted to the construction of bright primary pulse solutions. Their spectral analysis, which yields the proofs of Theorem~\ref{thm:main_bright} and Corollary~\ref{cor:bright_FC}, is presented in Section~\ref{sec:stability}. Section~\ref{sec:existence_black} is devoted to the construction of dark pulse solutions and contains the proofs of Theorem~\ref{thm:main_dark} and Corollary~\ref{cor:dark_FC}. In Section~\ref{sec:numerics}, we present numerical simulations that corroborate our existence and stability results. Finally, some technical computations are delegated to Appendix~\ref{app:computations}.

\paragraph*{Acknowledgments.} We would like to thank the organizers of the workshop \emph{Dynamics of Coherent Structures in Discrete and Continuum Nonlinear Systems} at the Institute of Mathematics at the University of Granada, held June 8--13, 2025, which provided important motivation for this work. This project is funded by the Deutsche Forschungsgemeinschaft (DFG, German Research Foundation) -- Project-ID 258734477 -- SFB 1173. 

\section{Main results} \label{sec:main_results}

In this section, we precisely formulate our main results, which establish the existence and stability of traveling bright and dark pulse solutions to the Lugiato--Lefever equation~\eqref{eq:LLE} for general forcing functions $F$. By periodically extending these pulse solutions, we then obtain soliton-based frequency combs. Since the forcing function propagates with phase velocity $\omega$, it is natural to seek bright and dark pulse solutions traveling at the same speed. Passing to the co-moving frame $x=\xi-\omega t$, we are therefore led to look for stationary solutions to
\begin{align}\label{eq:LLE2}
\iu u_t = - d u_{xx} + \iu\omega u_x + (\zeta - \gamma\iu) u - |u|^2 u + \iu F(x).
\end{align}

\subsection{Anomalous dispersion regime}

We begin by stating our main result for the anomalous dispersion regime, where we establish the existence and spectral stability of pulse solutions bifurcating from the bright NLS soliton~\eqref{bright_sol}. To this end, we set $d=1$, rescale the phase velocity $\omega$, the damping $\gamma$, and the forcing function $F$ in~\eqref{eq:LLE2} by a small parameter $\eps>0$, and assume without loss of generality that $\gamma=1$, yielding
\begin{align}\label{eq:LLE_main}
\iu u_t =- u_{xx} + \zeta u - |u|^2 u + \iu \eps\left(\omega u_x - u + F(x) \right).
\end{align}
For $\eps=0$, equation~\eqref{eq:LLE_main} then reduces to the focusing NLS equation.

To study the spectral stability of bright pulse solutions to~\eqref{eq:LLE_main}, we write it as as a system 
\begin{align}\label{eq:LLE_main_systems}
\ub_t = J\left(-\ub_{xx} + \zeta \ub - |\ub|^2 \ub\right) + \eps \left(\omega \ub_x - \ub + \mathbf{F}(x)\right),
\end{align}
in $\ub = (\Re(u), \Im(u))^\top$, where we denote
\begin{align} \label{eq:defJF}
	J = 
	\begin{pmatrix}
		0 & 1 \\ -1 & 0
	\end{pmatrix}, \qquad \mathbf{F}(x) = 
    \begin{pmatrix} \Re(F(x)) \\ \Im(F(x))\end{pmatrix}
\end{align}
and $|\ub| = \sqrt{\ub_1^2 + \ub_2^2}$ is the usual Euclidean norm. The linearization of~\eqref{eq:LLE_main_systems} about a smooth bounded stationary solution $\ubu \colon \R \to \R^2$ is then given by $\El_\eps(\ubu) - \eps$, where
\begin{align}\label{def:linearization_localized}
	L_\eps(\ubu) = -\partial_x^2 + \zeta -
	\begin{pmatrix}
	   3 \ubu_1^2 +\ubu_2^2 & 2 \ubu_1 \ubu_2 \\
	   2 \ubu_1 \ubu_2 &  \ubu_1^2 + 3 \ubu_2^2
	\end{pmatrix} - \eps \omega J \partial_x, \qquad \El_\eps(\ubu) = JL_\eps(\ubu).
\end{align}
As a composition of a skew-symmetric operator $J = - J^*$ and a self-adjoint operator $L_\eps(\ubu) = L_\eps(\ubu)^*$, the spectrum of the Hamiltonian linear operator $\El_\eps(\ubu)$ on $L^2(\R)$ possesses a symmetry with respect to the real and imaginary axis, cf.~\cite[Proposition 5.1.2]{KapitulaPromislow2013}.

Our main theorem establishes the existence and spectral stability of stationary single-pulse solutions to~\eqref{eq:LLE_main}. The proof is based on Lyapunov--Schmidt reduction, which reduces both questions to the analysis of the \emph{Melnikov integral}
\begin{align*}
\mathcal{M}_{\theta,\sigma} = \Re \langle\eu^{-\iu\theta} F(\cdot+\sigma),\phi_0\rangle_{L^2} + \iu 
\Im \langle\eu^{-\iu\theta} F'(\cdot+\sigma),\phi_0\rangle_{L^2}
\end{align*}
and the \emph{reduced Evans function} $D_{\theta,\sigma} \colon \C \to \C$ given by 
\begin{align} \label{eq:red_Evans}
    D_{\theta,\sigma}(\lambda) = \det(A_{\theta,\sigma}+\lambda^2B)
\end{align}
for $\theta,\sigma \in \R$, where we denote
\begin{align} \label{eq:explicit_matrices}
    A_{\theta,\sigma} =
    \begin{pmatrix}
        -\!\Im \langle\eu^{-\iu\theta}F''(\cdot+\sigma),\phi_0\rangle_{L^2} &
        -\!\Re \langle\eu^{-\iu\theta} F'(\cdot+\sigma),\phi_0\rangle_{L^2}\\
        -\!\Re \langle\eu^{-\iu\theta} F'(\cdot+\sigma),\phi_0\rangle_{L^2}&
         \Im\langle \eu^{-\iu\theta} F(\cdot+\sigma),\phi_0\rangle_{L^2}
    \end{pmatrix}, \qquad
    B =
    \begin{pmatrix}
        \sqrt{\zeta} & 0 \\ 0 & -\frac{1}{\sqrt{\zeta}}
    \end{pmatrix},
\end{align}
and $\phi_0$ is the bright soliton given by~\eqref{bright_sol}.

\begin{Theorem}[Bright primary pulse]\label{thm:main_bright}
Fix parameters $\zeta,T > 0$ and $\omega \in \R$. Let $F \in C^2(\R,\C)$ be $T$-periodic. Assume that $(\theta_0,\sigma_0) \in \R^2$ obeys the existence condition $\mathcal{M}_{\theta_0,\sigma_0} = 4\sqrt{\zeta}$ and the nondegeneracy condition $D_{\theta_0,\sigma_0}(0) \neq 0$. Then, there exist constants $C,\eps_0> 0$ such that for all $\eps \in (0,\eps_0)$ there exists a stationary solution $\unu_\eps \in H^2(\R) \oplus H_\per^2(0,T)$ to~\eqref{eq:LLE_main} with the following properties:
\begin{itemize}
    \item[(i)] \emph{(Approximation).} There exists a stationary background solution $v_\eps \in H_\per^2(0,T)$ to~\eqref{eq:LLE_main} such that
\begin{align} \label{eq:approximation_bright}
	\|\unu_\eps - \phi_{\theta_0}(\cdot + \sigma_0) - v_\eps\|_{H^2} \leq C \eps
\end{align}
and
\begin{align}\label{eq:approximation_background_wave}
\left\|v_\eps + \iu \eps (-\partial_x^2+\zeta)^{-1} F \right\|_{H_\per^2(0,T)} \leq C \eps^2. 
\end{align}
\item[(ii)] \emph{(Spectral stability).} If $D_{\theta_0,\sigma_0}$ has four distinct purely imaginary roots, then there exists $\mu \in (0,\eps)$ such that the spectrum of the linearization $\El_\eps(\ubu_\eps) - \eps \colon H^2(\R) \to L^2(\R)$ of~\eqref{eq:LLE_main_systems} about $\ubu_\eps = (\Re(\unu_\eps),\Im(\unu_\eps))^\top$ is confined to $\{\lambda \in \C : \Re(\lambda) \leq -\mu\}$.
\item[(iii)] \emph{(Spectral instability).} If $D_{\theta_0,\sigma_0}$ has a root in $\C \setminus \iu\R$, then there exists $\lambda \in \sigma(\El_\eps(\ubu_\eps) - \eps)$ with $\Re(\lambda) > 0$.
\end{itemize}
\end{Theorem}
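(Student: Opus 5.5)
The plan has four steps: construct the periodic background $v_\eps$, solve for the localized remainder by Lyapunov--Schmidt reduction around the orbit $\{\phi_\theta(\cdot+\sigma)\}$, then obtain the spectral claims (ii) and (iii) from a second reduction onto the generalized kernel of the NLS linearization.

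\textbf{Background wave.} We work in the real Hilbert spaces $H^2_{\R,\per}(0,T)$ and $H^2_\R(\R)$, where the cubic nonlinearity is smooth. We seek stationary $T$-periodic solutions of~\eqref{eq:LLE_main}, i.e.\ zeros of
\[
G(v,\eps)=-v''+\zeta v-|v|^2v+\iu\eps(\omega v'-v+F).
\]
At $v=0$, $\eps=0$ the linearization is $-\partial_x^2+\zeta$, which is invertible on $H^2_\per$ because $\zeta>0$. The implicit function theorem therefore gives a smooth branch $v_\eps$ with
\[
v_\eps=-\iu\eps(-\partial_x^2+\zeta)^{-1}F+O(\eps^2),
\]
which is~\eqref{eq:approximation_background_wave}.

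\textbf{Existence of the pulse.} Write $\unu=\phi_\theta(\cdot+\sigma)+v_\eps+w$ with $w\in H^2_\R(\R)$. We subtract the background equation, substitute, and use Lyapunov--Schmidt reduction with respect to the linearization $L_0(\phi_\theta(\cdot+\sigma))$. Its kernel is spanned by $\partial_x\phi$ and $\iu\phi$, and it is Fredholm of index zero. Projecting onto the range gives $w=O(\eps)$, smooth in $(\theta,\sigma,\eps)$. The bifurcation equations are the pairings of the residual with $\partial_x\phi$ and $\iu\phi$, divided by $\eps$. At $\eps=0$ they reduce to the two real components of the Melnikov integral.
\begin{itemize}
\item Pairing with $\iu\phi_\theta$ produces $\langle -\iu\phi,\iu\phi\rangle+\Re\langle F,\phi\rangle=-4\sqrt\zeta+\Re\langle\eu^{-\iu\theta}F(\cdot+\sigma),\phi_0\rangle$, using $\|\phi_0\|^2=4\sqrt\zeta$. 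This is the real-part condition.
\item Pairing with $\phi'$ gives $\Im\langle\eu^{-\iu\theta}F'(\cdot+\sigma),\phi_0\rangle=0$ after integrating by parts. Here the $\omega$-term and the damping term vanish by parity, since $\phi_0$ is even.
\item The interaction between $v_\eps$ and $\phi$ contributes terms of order $\eps^2$ only, because $v_\eps=O(\eps)$ enters the cubic term at least linearly times $\phi^2$. These need to be checked carefully, but they are $O(\eps)$ after dividing by $\eps$.
\end{itemize}
The Jacobian of this $2\times2$ system in $(\theta,\sigma)$ is, up to signs and rows, the matrix $A_{\theta_0,\sigma_0}$. Hence $D_{\theta_0,\sigma_0}(0)\neq0$ lets the implicit function theorem produce $(\theta_\eps,\sigma_\eps)$, and~\eqref{eq:approximation_bright} follows.

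\textbf{Spectral analysis.} The essential spectrum of $\El_\eps(\ubu_\eps)-\eps$ is determined by the periodic background. Since $v_\eps=O(\eps)$, its Floquet spectrum is an $O(\eps)$ perturbation of that of $J(-\partial_x^2+\zeta)-\eps(1-\omega\partial_x)$. That operator lies on $\Re\lambda=-\eps$, so via Floquet--Bloch perturbation the essential spectrum lies in $\Re\lambda\le-\eps+O(\eps^2)$. Away from the origin, the point spectrum of the NLS linearization $JL_0$ is contained in $\iu\R$, consisting of the zero eigenvalue and the embedded edge of the essential spectrum. Eigenvalues near a compact part of $\iu\R\setminus\{0\}$ are excluded or shown to move left. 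The point to exploit is the Hamiltonian structure of $\El_\eps$, which forces Krein-type symmetry, combined with the uniform shift $-\eps$.

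The delicate part is the fourfold generalized kernel at $0$, spanned by $\phi'$, $\iu\phi$, $x\phi$ and $\partial_\zeta\phi$. We perform a Lyapunov--Schmidt reduction of $(\El_\eps-\lambda')w=0$ onto this kernel with the scaling $\lambda'=\sqrt\eps\,\Lambda$. Expanding, the reduced $4\times4$ determinant becomes, after factoring powers of $\eps$, $\det(A+\Lambda^2B)+O(\sqrt\eps)$. The entries of $B$ come from the generalized-kernel pairings $\langle x\phi,\phi'\rangle$ and $\langle\partial_\zeta\phi,\phi\rangle=1/\sqrt\zeta$, including the appropriate normalization. The entries of $A$ come from the $O(\eps)$ perturbation, namely the derivative of the Melnikov map.

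For (ii): four distinct simple imaginary roots persist as four eigenvalues near $\sqrt\eps\,\iu\R$. The next-order term $-\eps$ from damping (the full operator is $\El_\eps-\eps$, and the Hamiltonian part keeps $\El_\eps$'s eigenvalues symmetric and simple, hence on $\iu\R$ up to $O(\eps^{3/2})$ symmetric corrections) yields $\Re\lambda=-\eps+o(\eps)$. We choose $\mu$ below this bound and below the essential-spectrum bound.

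For (iii): a root off $\iu\R$ gives, by the $\pm$ symmetry of $D$, a root with $\Re\Lambda>0$. Rouché's theorem then gives an eigenvalue with $\Re\lambda\approx\sqrt\eps\,\Re\Lambda-\eps>0$.

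\textbf{Main obstacle.} I expect the hardest step to be this reduction near $\lambda=0$. It requires tracking the Jordan structure with the correct $\sqrt\eps$ scaling. It also requires controlling the interplay between point spectrum and the periodic essential spectrum, since the latter touches a neighbourhood of $\iu\R$ at distance $\eps$ only. I would handle the latter with weighted spaces or exponential dichotomies in which the essential spectrum is shifted.
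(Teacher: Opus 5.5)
Your existence argument and your treatment of the four eigenvalues near the origin follow the paper: background wave by the implicit function theorem, Lyapunov--Schmidt reduction with kernel spanned by $\phi'$ and $\iu\phi$, the desingularized bifurcation map, and the splitting $\mathcal{A}\vb=-\lambda_1^2\mathcal{B}\vb$ with simple imaginary eigenvalues pinned to $\iu\R$ by Hamiltonian symmetry. Part (iii) is also argued as in the paper.

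\textbf{The gap in (ii).} Nothing in your proposal excludes unstable eigenvalues of $\El_\eps(\ubu_\eps)-\eps$ away from those four. Hamiltonian symmetry alone allows quartets off $\iu\R$. Moreover, $\El_\eps(\ubu_\eps)-\El_0(\ubu_0)$ has size $O(\eps)$, so eigenvalues emerging from the essential spectrum $\pm\iu[\zeta,\infty)$ of $\El_0(\ubu_0)$ (e.g.\ near its edges) can a priori acquire real parts of the same order as the damping shift $\eps$. A perturbative argument controls them only up to the size of the perturbation, so it cannot place them strictly left of $\iu\R$ after the shift.

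The missing idea is the Krein index count $k_r+2k_i^-+2k_c=n(L_\eps(\ubu_\eps))$ for the Hamiltonian part. The paper runs it as follows.
\begin{enumerate}
\item Sturm--Liouville theory gives $n(L_0(\ubu_0))=1$ and a two-dimensional kernel, so $1\le n(L_\eps(\ubu_\eps))\le 3$.
\item $D_{\theta_0,\sigma_0}$ has no real roots. Together with the a priori bound of Lemma~\ref{lem:spectral_aprior_bounds} and a Neumann series on compact real intervals, this gives $k_r=0$, hence $n(L_\eps(\ubu_\eps))=2$.
\item One of the four small imaginary eigenvalues has negative Krein signature. The paper obtains this from the finite-dimensional Krein count for $\tilde JM$ with $M=\operatorname{diag}(A_{\theta_0,\sigma_0},B^{-1})$, using $n(B^{-1})=1$.
\end{enumerate}
This saturates the index, so $k_c=0$ and all point spectrum of $\El_\eps(\ubu_\eps)$ lies on $\iu\R$. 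Only then does the shift by $-\eps$ place it on $\Re\lambda=-\eps$. You quote the Krein-signature formula from the reduction but never use it, and this is exactly where it is needed.

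\textbf{Bookkeeping in the existence step.} After subtracting the background equation, $F$ no longer appears in the residual. The interaction terms $-2|\phi|^2v_\eps-\phi^2\overline{v_\eps}$ are linear in $v_\eps$, hence $O(\eps)$, not $O(\eps^2)$. After division by $\eps$ they are precisely the Melnikov terms. If you dropped them as you propose, the bifurcation map at $\eps=0$ would be $(0,-4\sqrt{\zeta})$, which has no zeros.

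Your formulas come out right only because of the following identity. For $v_1=-\iu(-\partial_x^2+\zeta)^{-1}F$, self-adjointness gives $\langle\mathfrak{L}_{\theta,\sigma}v_1,\kappa\rangle_{L^2_\R}=0$ for $\kappa\in\ker\mathfrak{L}_{\theta,\sigma}$. This converts the interaction pairings into $\langle\iu F,\kappa\rangle_{L^2_\R}$. The paper does this explicitly via $(-\partial_x^2+\zeta)^{-1}\phi_{0,\sigma}^3=\phi_{0,\sigma}$ and $(-\partial_x^2+\zeta)^{-1}(\phi_{0,\sigma}^2\phi_{0,\sigma}')=\phi_{0,\sigma}'/3$.

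\textbf{Essential spectrum.} $\El_\eps(\vb_\eps)-\El_\eps(0)$ is $O(\eps^2)$, because the potential is quadratic in $v_\eps$. An $O(\eps)$ perturbation, as you write, would not yield your bound $\Re\lambda\le-\eps+O(\eps^2)$.
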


\begin{Remark} \label{rem:nonl_stab}
We emphasize that the spectral stability assertion in~(ii) of Theorem~\ref{thm:main_bright} translates into linear stability by combining the resolvent estimates established in~\cite{Bengel2024Stability} with the Pr\"uss theorem~\cite{Pruess1984}. Specifically, following the linear stability analysis in~\cite[Section~3.2]{Bengel2024Stability} verbatim, one obtains constants $M_0,\mu_1 > 0$ such that the linearization $\El_\eps(\ubu_\eps) - \eps$ generates a $C_0$-semigroup on $H^1(\R)$ obeying the exponential bound
\begin{align*}
\big\|\eu^{(\El_\eps(\ubu_\eps) - \eps)t}\big\|_{H^1 \to H^1} \leq M_0\eu^{-\mu_1 t}
\end{align*}
for $t \geq 0$. Consequently, classical arguments, cf.~\cite[Theorem~10.2.2]{Cazenave1998Anintroduction}, yield asymptotic nonlinear stability with exponential decay of perturbations. In particular, there exist constants $C_0,\delta,\mu_2 > 0$ such that, for every $v_0 \in H^2(\R)$ satisfying $\|v_0\|_{H^1} \leq \delta$, there exists a global classical solution
\begin{align*}
u \in C\big([0,\infty),H^2(\R) \oplus H^2_\per(0,T)\big) \cap C^1\big([0,\infty),L^2(\R) \oplus L^2_\per(0,T)\big)
\end{align*}
to~\eqref{eq:LLE_main} with $u(0) = \unu_\eps + v_0$ such that
\begin{align*}
\|u(t) - \unu_\eps\|_{H^1} \leq C_0\eu^{-\mu_2 t} \|v_0\|_{H^1}
\end{align*}
for all $t \geq 0$.
\end{Remark}

Using the recently developed theory~\cite{Bengel2026Multiple} for concatenating and periodically extending pulse solutions in spatially periodic systems, we obtain large-wavelength soliton-based frequency-comb solutions to~\eqref{eq:LLE_main} consisting of an arbitrary number of bright primary pulse solutions per period.

\begin{Corollary}[Bright frequency combs] \label{cor:bright_FC}
Let $M \in \N$. Fix parameters $\zeta,T > 0$, and $\omega \in \R$. Let $F \in C^1(\R,\C)$ be $T$-periodic. Assume that $(\theta_i,\sigma_i) \in \R^2$ obeys the existence condition $\mathcal{M}_{\theta_i,\sigma_i} = 4\sqrt{\zeta}$ and the nondegeneracy condition $D_{\theta_i,\sigma_i}(0) \neq 0$ for all $i \in \{1,\ldots,M\}$. 

Then, there exist constants $C,\eps_0 > 0$ such that for all $\eps \in (0,\eps_0)$ there exist constants $K_\eps,N_\eps \in \N$, satisfying $K_\eps,N_\eps \to \infty$ as $\eps \downarrow 0$, such that for all $n \in \N$ there exists a stationary solution $\unu_{\eps,n} \in H^2_\per(0,\ell_{\eps,n})$ of period $\ell_{\eps,n} := (N_{\eps} + n)T$ to~\eqref{eq:LLE_main} with the following properties:
\begin{itemize}
\item[(i)] \emph{(Approximation).} There exists a stationary background solution $v_\eps \in H_\per^2(0,T)$ to~\eqref{eq:LLE_main} satisfying~\eqref{eq:approximation_background_wave} such that
\begin{align*}
\sup_{x \in [0,\ell_{\eps,n}]}	\left|\unu_{\eps,n}(x) - \sum_{i = 1}^M \phi_{\theta_i}(x + \sigma_i - i K_\eps T) - v_\eps(x)\right| \leq C \eps.
\end{align*}
\item[(ii)] \emph{(Spectral stability).} If $D_{\theta_i,\sigma_i}$ has four distinct purely imaginary roots for all $i \in \{1,\ldots,M\}$, then there exists $\mu > 0$ such that for each $k \in \N$ the spectrum of the linearization $\El_\eps(\ubu_{\eps,n}) - \eps \colon H^2_\per(0,k\ell_{\eps,n}) \to L^2_\per(0,k\ell_{\eps,n})$ of~\eqref{eq:LLE_main_systems} about $\ubu_{\eps,n} = (\Re(\unu_{\eps,n}),\Im(\unu_{\eps,n}))^\top$ is confined to $\{\lambda \in \C : \Re(\lambda) \leq -\mu\}$.
\item[(iii)] \emph{(Spectral instability).} If $D_{\theta_i,\sigma_i}$ has a root in $\C \setminus \iu\R$ for some $i \in \{1,\ldots,M\}$, then there exists $\lambda$ in the spectrum of $\El_\eps(\ubu_{\eps,n}) - \eps \colon H^2_\per(0,\ell_{\eps,n}) \to L^2_\per(0,\ell_{\eps,n})$ with $\Re(\lambda) > 0$. 
\end{itemize}
\end{Corollary}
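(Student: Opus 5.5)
The plan is to obtain the corollary by applying the abstract concatenation and periodic-extension theory of~\cite{Bengel2026Multiple} to the primary pulses of Theorem~\ref{thm:main_bright}.

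\textbf{Primary pulses and hypotheses.} For each $i\in\{1,\ldots,M\}$, Theorem~\ref{thm:main_bright} yields a stationary solution $\unu_{\eps}^{(i)} = v_\eps + w_\eps^{(i)}$ with $w_\eps^{(i)} \in H^2(\R)$ close to $\phi_{\theta_i}(\cdot+\sigma_i)$. Two facts are essential here. First, the background wave $v_\eps$ is the same for all $i$: it is the unique small $T$-periodic solution near $-\iu\eps(-\partial_x^2+\zeta)^{-1}F$, obtained from the implicit function theorem independently of $(\theta_i,\sigma_i)$. Second, $\eps_0$ is taken as the minimum over the finitely many $i$.

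The theory of~\cite{Bengel2026Multiple} requires each primary pulse to be \emph{nondegenerate}, meaning that the linearization $\El_\eps(\ubu^{(i)}_\eps)-\eps$ has trivial kernel, or more precisely that $0$ lies in its resolvent set. It also requires the asymptotic periodic state $v_\eps$ to be hyperbolic, i.e.\ the spatial Floquet exponents of the linearization about $v_\eps$ at $\lambda=0$ are off the imaginary axis.

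\textbf{Verifying nondegeneracy.} Hyperbolicity of $v_\eps$ follows by perturbation: the constant-coefficient operator $-\partial_x^2+\zeta$ has spatial eigenvalues $\pm\sqrt\zeta$, and $v_\eps = O(\eps)$ perturbs them only slightly. Invertibility of the pulse linearization comes from the Lyapunov--Schmidt reduction underlying Theorem~\ref{thm:main_bright}. The small eigenvalues of $\El_\eps(\ubu_\eps^{(i)})$ near $0$ are $\lambda=\sqrt{\eps}\,\nu+o(\sqrt\eps)$, where $\nu$ is a root of $D_{\theta_i,\sigma_i}$. Since $D_{\theta_i,\sigma_i}(0)\neq0$ and the roots are bounded away from $0$, these eigenvalues stay at distance of order $\sqrt\eps \gg \eps$ from $0$. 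Hence $0$ is not an eigenvalue of $\El_\eps-\eps$, and the remaining spectrum is bounded away from $0$. I expect this bookkeeping, in particular checking the precise transversality and exponential-dichotomy hypotheses of~\cite{Bengel2026Multiple} uniformly in $\eps$, to be the main technical point. Concretely, it amounts to showing that the reduced $2\times2$ system derived in the proof of Theorem~\ref{thm:main_bright} gives an $\eps$-dependent lower bound on the resolvent at $0$.

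\textbf{Existence and approximation.} With these hypotheses verified, the existence theorem of~\cite{Bengel2026Multiple} supplies $K_\eps$, of the order of $|\log\eps|$ times the inverse of the dichotomy rate, and $N_\eps \ge (M+1)K_\eps$. For every $n$, it yields a periodic solution with period $(N_\eps+n)T$ whose deviation from the concatenation of the pulses shifted by $iK_\eps T$ is bounded by the tail interaction, of order $\eu^{-cK_\eps T}\le\eps$. Combining this with~\eqref{eq:approximation_bright} and the Sobolev embedding gives the sup-norm estimate~(i). The shifts are integer multiples of $T$, which is exactly what keeps the translated pulses solutions of the $T$-periodic problem.

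\textbf{Spectral stability and instability.} For~(ii), I would invoke the spectral part of~\cite{Bengel2026Multiple}: for $\lambda$ in a fixed compact half-plane region, the spectrum of the multipulse on $H^2_\per(0,k\ell_{\eps,n})$ lies near the union of the point spectra of the primary pulses and the essential spectrum of the background. The background contributes spectrum on $\{\Re\lambda\le-\mu'\}$, since it is a perturbation of the damped operator with essential spectrum on $\Re\lambda=-\eps$. Large $|\lambda|$ is excluded by the resolvent estimates cited in Remark~\ref{rem:nonl_stab}. Theorem~\ref{thm:main_bright}(ii) for each $i$ then gives a uniform $\mu$ independent of $k$ and $n$.

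For~(iii), the unstable eigenvalue of pulse $i$, which is simple by Krein/Hamiltonian counting, persists in the multipulse. The argument is Rouché's theorem applied to the Evans function, or equivalently the perturbation result of~\cite{Bengel2026Multiple} for isolated eigenvalues, and the persisting eigenvalue gives $\lambda\in\sigma$ with $\Re\lambda>0$.
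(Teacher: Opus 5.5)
Your proposal is correct and follows the paper's route. Nondegeneracy of each primary pulse comes from $D_{\theta_i,\sigma_i}(0)\neq0$ via the eigenvalue splitting, as in Proposition~\ref{prop:spec_stability_1-pulse}(i); this is only needed at fixed $\eps$, so no uniformity in $\eps$ is required, since $K_\eps,N_\eps$ may blow up. Then \cite[Theorems~3.1 and~4.1]{Bengel2026Multiple} give the concatenation and periodic extension, and the spectral-inheritance results of \cite{Bengel2026Multiple} give (ii) and (iii). For excluding large $|\lambda|$ the paper uses the a-priori bound of Lemma~\ref{lem:spectral_aprior_bounds}, which is uniform in the potential, rather than the pulse-specific resolvent estimates of Remark~\ref{rem:nonl_stab}.

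One small inaccuracy: Krein counting does not force the unstable eigenvalue in (iii) to be simple (e.g.\ $D_{\theta_i,\sigma_i}$ can have a double positive root). This is harmless, because persistence of the total algebraic multiplicity of an isolated eigenvalue group is all the argument needs.
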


\begin{Remark}
The analysis in~\cite{Bengel2026Multiple} shows that, for fixed $\eps>0$, the $M-1$ distances between the constituent primary pulses may be chosen arbitrarily large independently of each other. Together with the parameter $\eps$, the wavelength, and the invariance under the translation $x \mapsto x + T$, this yields an $(M+2)$-parameter family of soliton-based frequency-comb solutions to~\eqref{eq:LLE_main}. For simplicity of exposition, however, we restrict ourselves to the two-parameter family $\unu_{\eps,n}$ in Corollary~\ref{cor:bright_FC}.
\end{Remark}

After rescaling their periods to $1$, the frequency combs in Corollary~\ref{cor:bright_FC} consist of ultrashort bright pulses. Specifically, by choosing $n$ sufficiently large, the individual pulses comprising the frequency comb can be made arbitrarily localized without changing their amplitude.

Theorem~\ref{thm:main_bright} and Corollary~\ref{cor:bright_FC} show that, provided the reduced Evans functions associated with the primary pulses have no higher-multiplicity purely imaginary roots, the frequency-comb solutions inherit the spectral stability properties of their constituent primary pulses. In particular, if all reduced Evans functions have four distinct purely imaginary roots, then the resulting frequency combs are strongly spectrally stable, with a spectral gap of size $\mu > 0$. With the same arguments as in Remark~\ref{rem:nonl_stab}, this spectral gap implies asymptotic nonlinear stability with respect to subharmonic perturbations, with exponential convergence to the underlying frequency comb. Thus, in agreement with experimental observations and numerical simulations~\cite{Taheri2017Optical,Bengel2026Dynamics}, the frequency combs are pinned in the spatial domain. This contrasts sharply with soliton-based frequency combs of the Lugiato--Lefever equation with constant forcing, for which perturbations may induce a nontrivial asymptotic phase shift~\cite{Bengel2025Existence}.

\subsection{Normal dispersion regime}

We present our main result for the normal dispersion regime $d=-1$, where we establish the existence of dark pulse solutions bifurcating from the formal concatenation of domain walls~\eqref{black_sol}. The bifurcation argument proceeds in two stages: we first switch on the forcing and subsequently the damping. To facilitate this construction, we scale the forcing function $F$ in~\eqref{eq:LLE2} by a different small parameter than the phase velocity $\omega$ and the damping $\gamma$. Thus, after substituting $u \mapsto \iu u$, introducing small parameters $\eps,\eta>0$ and, without loss of generality, normalizing $\gamma=1$, we arrive at
\begin{align}\label{eq:LLE_main_tw_mod}
\iu u_t = u_{xx} + \zeta u - |u|^2 u + \iu \eps \left(\omega u_x -u\right) + \eta F(x).
\end{align}
For $\eps=\eta=0$, equation~\eqref{eq:LLE_main_tw_mod} then reduces to the defocusing NLS equation.

As in the anomalous dispersion regime, we first establish the existence of a single pulse solution, which then serves as a building block for the construction of large-wavelength soliton-based frequency combs. Our main theorem reduces the existence of a dark primary pulse solution to the study of the \emph{Melnikov integral} $\mathcal{M} \colon \R \to \R$ given by
\begin{align} \label{eq:def_Melnikov}
\mathcal{M}(\sigma) = \langle F(\cdot+\sigma),\psi'\rangle_{L^2},
\end{align}
where
\begin{align} \label{black_sol2}
\psi(x) = \sqrt{\zeta} \tanh\left(\sqrt{\tfrac12 \zeta} \, x\right)
\end{align}
is the black-soliton solution to the NLS equation~\eqref{NLS} connecting $-\sqrt{\zeta}$ to $\sqrt{\zeta}$. 

\begin{Theorem}[Dark primary pulse] \label{thm:main_dark} Fix parameters $\zeta,T > 0$ and $\omega \in \R$. Let $F \in C^1(\R,\R)$ be $T$-periodic with
\begin{align} \label{eq:F_mean}
\int_0^T F(x) \de x < 0.
\end{align}
Assume that $\sigma_0 \in \R$ obeys the existence and nondegeneracy condition
\begin{align} \label{eq:exist_dark}
\mathcal{M}(\sigma_0) = 0, \qquad \mathcal{M}'(\sigma_0) \neq 0.
\end{align}
Then, there exist a constant $C > 0$ and sequences $\{\eta_j\}_{j \in \N},\{X_j\}_{j \in \N}, \{\eps_j\}_{j \in \N} \subset (0,\infty)$ satisfying
\begin{align*}
\lim_{j \to \infty} \eta_j = 0, \qquad \lim_{j \to \infty} X_j = \infty, \qquad \lim_{j \to \infty} \eps_j = 0,
\end{align*}
such that for all $j \in \N$ there exists a stationary solution $\unu_j \in H^2(\R) \oplus H^2_\per(0,T)$ to~\eqref{eq:LLE_main_tw_mod} with $(\eta,\eps) = (\eta_j,\eps_j)$ enjoying the estimate
\begin{align*}
\left|\unu_j(\pm x) + \psi(x - X_j)\right| \leq \frac{C}{j}, 
\end{align*}
for $x \geq 0$.
\end{Theorem}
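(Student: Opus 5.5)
We follow the two-stage bifurcation indicated before~\eqref{eq:LLE_main_tw_mod}: first switch on the forcing $\eta F$ with $\eps=0$, then switch on the damping and drift $\eps(\omega u_x - u)$. The profile $u=\psi$ solves $\psi''+\zeta\psi-\psi^3=0$, and the reflection symmetry $x\mapsto -x$, $u\mapsto -u$ of the stationary problem suggests seeking a solution of the form ``$-\psi(|x|-X)$'' glued at $x=0$.

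\textbf{Step 1 (background wave).} For $\eps=0$ and $\eta$ small we work with real-valued $F$ and real $u$. We obtain a $T$-periodic background state $v_\eta^\pm=\pm\sqrt{\zeta}+\eta w^\pm+O(\eta^2)$ by the implicit function theorem. The linearization $\partial_x^2+\zeta-3\zeta=\partial_x^2-2\zeta$ about $\pm\sqrt\zeta$ is invertible on $H^2_\per(0,T)$, so this step is routine. However, the linearization of the full complex problem about $\pm\sqrt\zeta$ also contains the neutral phase direction $\partial_x^2+\zeta-\zeta=\partial_x^2$ acting on $\Im u$. This is the origin of the neutral essential spectrum.

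\textbf{Step 2 (single domain wall with forcing, $\eps=0$).} On the half-line $[0,\infty)$, or on $\R$ for the real equation, we perturb $\psi(\cdot-\sigma)$. We write the stationary equation as a first-order system $U'=A(x)U+\dots$ and use exponential dichotomies: the linearization about $\pm\sqrt\zeta$ in the real component has hyperbolic rates $\pm\sqrt{2\zeta}$. A Lin/Lyapunov--Schmidt argument then yields a heteroclinic from $v^-_\eta$ to $v^+_\eta$ (shifted periodic states) if and only if the bifurcation equation $\eta\mathcal{M}(\sigma)+O(\eta^2)=0$ holds. Here $\psi'$ spans the kernel of the self-adjoint operator $\partial_x^2+\zeta-3\psi^2$. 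Condition~\eqref{eq:exist_dark} and the implicit function theorem give $\sigma(\eta)$ near $\sigma_0$. The dark pulse is then formed by two such walls joined by a plateau of length $2X$ near $-v^+$. Alternatively, and more economically, we exploit the symmetry: we solve on $[0,\infty)$ with the boundary condition $u'(0)=0$ (even extension of $-u$ is not compatible with $u\mapsto -u$, so we instead impose the reversibility condition at $x=0$ that makes $u(-x)$ the mirrored wall). The plateau matching at $x=0$ then produces an additional scalar equation, with exponentially small terms $\eu^{-\sqrt{2\zeta}X}$. This equation is solvable only along discrete $X\in\sigma(\eta)+T\N$ combined with a tuning of $\eta$. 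This explains why the statement provides sequences $\eta_j$ and $X_j$ rather than continua.

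\textbf{Step 3 (switching on $\eps$).} This is the main obstacle. For $\eps>0$ the complex phase component about the plateau has spatial Floquet exponents that vanish at $\eps=0$. To leading order they satisfy $\nu^2\approx c\,\eps\,\nu+c'\eps$, so they are $O(\sqrt\eps)$ or $O(\eps)$, and there is no uniform dichotomy. We track these small exponents, presumably of size $\sqrt{\eps}$ with a sign dictated by~\eqref{eq:F_mean}, which enters through the mean of the background correction $w^\pm$ and fixes the sign of the effective detuning on the plateau. We then build exponential dichotomies with $\eps$-dependent rates on the plateau $[0,X]$ and on $[X,\infty)$. The key requirement is the coupling $X\sim |\log\eps|$, or more precisely $X\sqrt\eps\to0$, so that the weak growth or decay across the plateau is controlled. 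We then solve the full matching problem by a contraction argument in exponentially weighted spaces. The remaining degrees of freedom $(\eta,\eps)$ absorb the two scalar conditions: the Melnikov condition in the translation direction and the phase-matching condition. This yields the sequences $(\eta_j,X_j,\eps_j)$. The approximation estimate $|\unu_j(\pm x)+\psi(x-X_j)|\le C/j$ follows from the $O(\eta_j+\eps_j)$ size of the corrections after reindexing.
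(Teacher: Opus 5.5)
\textbf{There is a genuine gap in Step~3.} The main route in your Steps~1--2 matches the paper's first stage: periodic states $v_{\pm,\eta}$, a real front and back from the Melnikov condition, and their concatenation into a real pulse. Note, however, that the plateau sits near $+\sqrt{\zeta}$, not near $-v^+$. Step~3 misidentifies the obstruction and so misses the key step.

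\emph{The phase-direction exponents are governed by $\eta$, not $\eps$.} Already at $\eps=0$, $\eta>0$, the operator $L_-(\unu)=\partial_x^2+\zeta-\unu^2$ has Floquet exponents of size $\sqrt{\eta}$ at the periodic states. The condition $\int_0^T F<0$ makes them real, $\pm\nu_\eta$, at the end state $v_{-,\eta}\approx-\sqrt{\zeta}$, but purely imaginary, $\pm\iu\xi_\eta$, on the plateau $v_{+,\eta}\approx+\sqrt{\zeta}$. So the sign of the mean of $F$ buys hyperbolicity at the ends, i.e.\ Fredholmness of $L_-$, not decay or growth across the plateau.

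\emph{No dichotomy exists on the plateau, and switching on $\eps$ does not create one.} The spatial problem at $\lambda=0$ is a real ODE with real $T$-periodic coefficients. For nonresonant $\eta$, the two central multipliers $\eu^{\pm\iu\xi_\eta T}$ form a simple non-real conjugate pair, so for small $\eps$ they remain a conjugate pair of equal modulus. The two central exponents therefore keep identical real parts; the origin lies in the absolute spectrum of the plateau. Neither $\eps$-dependent rates nor exponential weights can separate them with the splitting dictated by the end states. Hence your contraction argument on $[0,X]$ cannot be set up. The coupling $X\sqrt{\eps}\to0$ also does not address the actual difficulty, which is the $O(1)$ rotation angle $\xi_\eta X$ accumulated across the plateau.

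\emph{What is needed is nondegeneracy of the real pulse in the phase direction,} $\ker L_-(\unu_{\eta,n})=\{0\}$. Since $L_+(\unu_{\eta,n})$ is already invertible, $\El_0=J\,\mathrm{diag}(L_+,L_-)$ is then invertible. The implicit function theorem in $\eps$ then gives solutions for all small $\eps$, so there is no phase-matching condition to absorb by tuning $(\eta,\eps)$. The paper proceeds as follows:
\begin{itemize}
\item Dichotomies on $(-\infty,mT]$ and $[(n-m)T,\infty)$ reduce the kernel problem to a boundary-value problem on the plateau.
\item On the plateau, Floquet theory conjugates the flow to a rotation, so the matching determinant is close to $\det(Q_0^{-1}R_\theta Q_0p_-\,|\,p_+)$. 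This determinant vanishes for some angles $\theta$.
\item Imposing the nonresonance condition $\xi_\eta T/\pi\notin\Q$ and using Kronecker's theorem, one selects plateau lengths $n_jT$ whose angle stays near a good value, which gives invertibility.
\end{itemize}

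\emph{This is the true reason the theorem produces sequences $\eta_j,X_j,\eps_j$.} It is not an extra scalar equation in the real concatenation. That concatenation works for every large $n$, because both walls are $L_+$-nondegenerate and pinned by the forcing.

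\emph{Your reversibility shortcut is not available.} The term $\eta F$ breaks $u\mapsto-u$. The reflection $x\mapsto-x$ replaces $F(x)$ by $F(-x)$, which differs for non-even $F$. For $\eps>0$, the term $\eps\omega u_x$ breaks reversibility as well.
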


\begin{Remark}
We note that~\eqref{eq:F_mean} is, in fact, a nondegeneracy condition requiring that $F$ has nonzero mean. Indeed, if $F$ has positive mean, the coordinate transformation $u \mapsto -u$ in~\eqref{eq:LLE_main_tw_mod} flips the sign of the forcing function and~\eqref{eq:F_mean} holds.
\end{Remark}

\begin{Remark}
Theorem~\ref{thm:main_dark} establishes a one-parameter family of dark pulse solutions. Our analysis shows that, for fixed $\eta = \eta_j$, the distance $2K$ between the domain walls can be chosen arbitrarily large, while the parameter $\eps>0$ can be chosen arbitrarily small independently, thereby yielding a three-parameter family of solutions. For simplicity of presentation, we confine ourselves to the one-parameter family $\unu_j$ in Theorem~\ref{thm:main_dark}.
\end{Remark}

The dark pulse solution established in Theorem~\ref{thm:main_dark} consists of two domain walls: a front interface connecting the periodic end state near $-\sqrt{\zeta}$ to a long periodic plateau state near $\sqrt{\zeta}$, and a second interface connecting the plateau state back to the end state. Consequently, the solution represents a single dark pulse superposed on a small-amplitude periodic background.

To the best of the authors' knowledge, Theorem~\ref{thm:main_dark} provides the first rigorous existence result for dark pulse solutions to the Lugiato--Lefever equation bifurcating from the black NLS soliton. Previous bifurcation results from the black NLS soliton concern conservative perturbations~\cite{Pelinovsky2008Dark} 
or dissipative perturbations~\cite{Lega1997,Lega2001,Kapitula2000} which preserve the gauge invariance of the NLS equation, allowing to naturally identify the phase. A key challenge in proving Theorem~\ref{thm:main_dark} is that the damping and forcing in~\eqref{eq:LLE} break both the variational structure and the gauge symmetry of the NLS equation.

As in the case of bright pulse solutions in the anomalous dispersion regime, we employ the framework developed in~\cite{Bengel2026Multiple} to concatenate and periodically extend dark primary pulses, thereby obtaining soliton-based frequency-comb solutions. This approach yields frequency combs consisting of an arbitrary number of well-separated dark pulses within a single periodicity interval, and hence with arbitrarily broad frequency spectrum.

\begin{Corollary}[Dark frequency combs] \label{cor:dark_FC}
Let $M \in \N$. Fix parameters $\zeta,T > 0$ and $\omega \in \R$. Let $F \in C^1(\R,\R)$ be $T$-periodic and satisfy~\eqref{eq:F_mean}. Assume that $\sigma_0 \in \R$ obeys the existence and nondegeneracy condition~\eqref{eq:exist_dark}. 

Then, there exist a constant $C > 0$ such that for all $j \in \N$ there exist constants $K_j,N_j \in \N$ satisfying $K_j,N_j \to \infty$ as $j \to \infty$, such that for all $n \in \N$ there exists a stationary solution $\unu_{j,n} \in H^2_\per(0,\ell_{j,n})$ of period $\ell_{j,n} := (N_{j}+n)T$ to~\eqref{eq:LLE_main_tw_mod} enjoying the estimate
\begin{align*}
\sup_{x \in [0,\ell_{j,n}]} \left|\unu_{j,n}(x) - \sum_{i = 1}^M \unu_j(x - i K_j T)\right| \leq \frac{C}{j},
\end{align*}
where $\unu_j$ is the dark primary pulse solution, established in Theorem~\ref{thm:main_dark}.
\end{Corollary}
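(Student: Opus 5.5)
We deduce Corollary~\ref{cor:dark_FC} from Theorem~\ref{thm:main_dark} via the concatenation and periodic-extension framework of~\cite{Bengel2026Multiple}, mirroring the bright case in Corollary~\ref{cor:bright_FC}. That framework applies to a spatially $T$-periodic second-order ODE system. We are given a primary pulse $\unu_j$ that is asymptotic to a $T$-periodic background. Suppose the pulse is \emph{nondegenerate}, meaning the linearization about it has trivial kernel in the relevant function space, or more generally admits exponential dichotomies on both half-lines with transversal intersection of the associated subspaces. Suppose in addition that the background is hyperbolic, i.e.\ its Floquet spectrum is disjoint from the unit circle. The framework then produces, for $K$ large, multipulses near the $M$-fold concatenation with spacing $KT$, together with periodic extensions of period $(N+n)T$, with error controlled by $\eu^{-cK}$.

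\emph{Step 1: the end state.} First I would verify that, for fixed $j$, the asymptotic state $v_j$ of $\unu_j$ is a hyperbolic $T$-periodic solution of the stationary equation of~\eqref{eq:LLE_main_tw_mod} with $(\eta,\eps)=(\eta_j,\eps_j)$. The end state lies near $-\sqrt{\zeta}$, and the black soliton's asymptotic states produce small Floquet exponents. The construction in the proof of Theorem~\ref{thm:main_dark} tracks these exponents and shows that they move off the imaginary axis once $\eps>0$ (the damping) is switched on. I would extract this as a statement that, for $\eps_j>0$, the end state has a spatial exponential dichotomy on $\R$ with rate $\mu_j>0$, where possibly $\mu_j\to 0$. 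The plateau near $+\sqrt{\zeta}$ is only traversed internally by $\unu_j$, so it plays no role as an end state of the concatenation.

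\emph{Step 2: nondegeneracy of the primary pulse.} Next I would show that the variational equation along $\unu_j$ has no nontrivial bounded solution, or equivalently that the stable and unstable subspaces intersect transversally. This should follow from the Lyapunov--Schmidt and dichotomy construction in the proof of Theorem~\ref{thm:main_dark}. The reduced equations there are solved by the implicit function theorem using $\mathcal{M}'(\sigma_0)\neq0$. The translation direction is absent because $F$ breaks translation invariance, and the phase direction is fixed through $\eta F$ together with~\eqref{eq:F_mean}. An invertible reduced Jacobian at the solution means that the full linearized problem on exponentially weighted spaces is invertible. I would then translate this into the hypotheses of~\cite{Bengel2026Multiple}.

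\emph{Step 3: apply the framework.} Fix $j$ and apply the concatenation theorem with $M$ copies of $\unu_j$ at spacings $K_jT$, then the periodic-extension theorem with periods $(N_j+n)T$ for $n\in\N$. This yields $\unu_{j,n}$ with sup-distance to $\sum_i \unu_j(\cdot-iK_jT)$ of order $\eu^{-\mu_j K_jT}$, plus a correction accounting for the background being counted multiple times; that correction is absorbed because each $\unu_j$ is $C/j$-close to $\mp\psi$ pieces. Choosing $K_j,N_j$ large enough that the error is at most $1/j$ gives the claimed bound with constant $C$.

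The main obstacle is Step~1 combined with the uniformity of Step~2. The dichotomy rate $\mu_j$ degenerates as $j\to\infty$ because of the near-neutral Floquet exponents, so $K_j$ must be chosen depending on $j$. One must also confirm that the bounds in~\cite{Bengel2026Multiple} hold for each fixed $j$ without uniformity in $j$. I expect this to work, since the corollary only claims $K_j,N_j\to\infty$, and I would verify it by quoting the explicit Floquet-exponent asymptotics established in the proof of Theorem~\ref{thm:main_dark}.
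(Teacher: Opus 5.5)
Your overall architecture matches the paper's. For each fixed $j$, you feed nondegeneracy of $\unu_j$ into \cite[Theorem~3.1, Lemma~3.3, Theorem~4.1]{Bengel2026Multiple}, with $K_j,N_j$ depending on $j$.

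The genuine gap is Step~2, which carries the entire weight, and the mechanism you propose would fail.

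\emph{What the reduction actually gives.} The Lyapunov--Schmidt reduction with $\mathcal{M}'(\sigma_0)\neq 0$ is performed only for \emph{real-valued} fronts and backs at $\eps=0$ (Proposition~\ref{prop:existence_1-front}). It yields invertibility of $L_+$ only.

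\emph{Why the phase direction is not reachable that way.} The phase direction $L_-$ cannot be handled by any reduction of this kind. Along a front, $0$ lies in the essential spectrum of $L_-$, because the plateau state $v_{+,\eta}$ has purely imaginary Floquet exponents $\pm\iu\xi_\eta$ (Lemma~\ref{lem:splitting_Floquet_exponents}). Along the pulse, $0$ lies in the absolute spectrum of the long plateau. Neither ``the phase direction is fixed through $\eta F$'' nor an invertible reduced Jacobian on weighted spaces addresses this. Moreover, the nondegeneracy required in \cite{Bengel2026Multiple} is invertibility on the unweighted spaces $H^2(\R)\oplus H^2_\per(0,T)\to L^2(\R)\oplus L^2_\per(0,T)$.

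\emph{What the paper does instead (Lemma~\ref{lem:nondeg_L-}).}
\begin{enumerate}
\item Exponential dichotomies on the two tails reduce $L_-(\unu_{\eta,n})u=0$ to a boundary-value problem on the plateau.
\item The Floquet normal form of the plateau system shows that, up to exponentially small errors, the plateau acts as a rotation by the angle $\xi_\eta(n-2m)T$.
\item The resulting matching determinant vanishes for certain rotation angles, so invertibility is \emph{not} automatic.
\item The nonresonance condition~\eqref{eq:non_resonance} and Kronecker's theorem are then used to select plateau lengths $n_j$ for which the determinant stays bounded away from zero.
\end{enumerate}
Only after combining this with invertibility of the periodic block $\El_0(\vb_{-,\eta})$ and the implicit function theorem in $\eps$ (Proposition~\ref{prop:existence_periodic_2-front}) is $\unu_j$ known to be nondegenerate. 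For the corollary you should simply invoke that established nondegeneracy, as the paper does. Re-deriving it along the lines you sketch does not work.

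Step~1 also rests on the wrong mechanism. The end state $v_{-,\eta}$ is hyperbolic already at $\eps=0$. By~\eqref{eq:F_mean} one has $v_{-,0}\int_0^T F\,\de x>0$, so Lemma~\ref{lem:splitting_Floquet_exponents} gives real Floquet exponents $\pm\nu_\eta$ of size $\sim\eta^{1/2}$. The damping is only a small perturbation on top of this, and the degeneration of rates as $j\to\infty$ comes from $\eta_j\to 0$. Nor is the plateau irrelevant: it is exactly where the difficulty of Step~2 sits.

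Finally, your claim that the overcounted background is ``absorbed'' is not right. The end state lies near $-\sqrt{\zeta}\neq 0$, so $\sum_{i=1}^M\unu_j(\cdot-iK_jT)$ carries $M-1$ surplus copies of it. That is an $\mathcal{O}(1)$ discrepancy which no choice of $K_j,N_j$ reduces below $C/j$. The estimate has to be understood for the formal concatenation~\eqref{eq:formal_concat}, with the background counted once.
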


\begin{Remark}
Theorem~\ref{thm:main_dark} does not address the stability of the dark pulse solutions, which we leave for future work. We expect the stability analysis to be particularly delicate, primarily because the origin belongs to the absolute spectrum associated with the long periodic plateau state connecting the front and back interfaces of the dark primary pulse; see~\S\ref{sec:existence_black} for more details. Previous works~\cite{Sandstede2000,Sandstede2000Gluing,Goh2022,Carter2021} considering traveling-pulse solutions which such long plateau states show that, as the plateau length tends to infinity, eigenvalues of the linearization about the pulse accumulate onto the absolute spectrum associated with the plateau state. We expect that tracking these eigenvalues near zero in the stability analysis of the dark pulse solutions could be particularly challenging, cf.~\cite{Goh2022}.
\end{Remark}

\subsection{Method of proof and challenges} \label{sec:method_of_proof}

The proof of Theorem~\ref{thm:main_bright} follows along the lines of the bifurcation analysis in~\cite{Bengel2024Stability} for the case of a constant forcing function $F$. The existence argument exploits the fact that the linearization of the NLS equation about its bright soliton solution possesses an isolated eigenvalue at $0$ arising from translational and rotational invariance. A Lyapunov--Schmidt reduction argument then yields a two-dimensional bifurcation equation $\mathcal{M}_{\theta,\sigma} = 4\sqrt{\zeta}$ determining the translation and phase shift of the bright soliton~\eqref{bright_sol} at which pulse solutions bifurcate. 

To analyze the spectral stability of the bifurcating  pulse solutions, we employ a Krein index counting argument~\cite{KapitulaKevrekidis2004,AddendumKapitulaKevrekidis2004}, which shows that any unstable spectrum of the linearization of~\eqref{eq:LLE_main_systems} about the pulse must originate from four critical eigenvalues bifurcating from the the origin. A standard, but tedious, Lyapunov--Schmidt reduction procedure then leads to the reduced Evans function $D_{\theta,\sigma}$ that determines the leading-order location of these eigenvalues, thereby completing the proof of Theorem~\ref{thm:main_bright}.

The proof of Theorem~\ref{thm:main_dark} is considerably more complicated. Attempting to bifurcate directly from the black NLS soliton is obstructed by the fact that the origin lies in the essential spectrum of the linearization. To overcome this difficulty, we first set $\eps=0$ and restrict our attention to real-valued solutions of~\eqref{eq:LLE_main_tw_mod}. The advantage of this approach is that, in the formulation restricted to real-valued stationary solutions, $0$ is not in the essential spectrum of the linearization but instead corresponds to a simple isolated eigenvalue. Consequently, a Lyapunov--Schmidt reduction argument yields, provided $\eta > 0$ is sufficiently small, real-valued front and back solutions to~\eqref{eq:LLE_main_tw_mod} for $\eps = 0$ bifurcating from the black NLS soliton~\eqref{black_sol}. The resulting one-dimensional bifurcation equation $\mathcal{M}(\sigma)=0$ determines their phase shift.

We then construct a real-valued dark pulse solution to~\eqref{eq:LLE_main_tw_mod} for $\eps = 0$ by employing the framework developed in~\cite{Bengel2026Multiple} to concatenate a well-separated front and back solution with matching periodic end states. By carefully tracking the spatial Floquet exponents of these small-amplitude periodic end states under bifurcation, we show that the essential spectrum of the linearization about the dark pulse is bounded away from the origin for $\eta > 0$, provided that~\eqref{eq:F_mean} holds.

The key technical challenge in subsequently bifurcating with respect to $\eps$ is to establish the nondegeneracy of the real-valued dark pulse solution to~\eqref{eq:LLE_main_tw_mod} for $\eps = 0$, that is, the invertibility of the linearization about the dark pulse. The difficulty stems from the fact that the long periodic plateau, connecting the front and the back interface, possesses spatial Floquet exponents with identical real parts. As a consequence, the origin belongs to the \emph{absolute spectrum}~\cite{Sandstede2000} of the plateau state. Previous analyses considering traveling-pulse solutions which such long plateau states show that eigenvalues of the linearization about the pulse accumulate onto the absolute spectrum as the plateau length tends to infinity~\cite{Sandstede2000,Sandstede2000Gluing,Goh2022,Carter2021}. Inspired by~\cite{Sandstede2000,Sandstede2000Gluing}, we employ exponential dichotomies to reduce the eigenvalue problem at $\lambda=0$ to a boundary-value problem posed along the plateau. We solve this problem by a variation-of-constants argument, using Floquet theory and exploiting the exponential convergence of the front and back to their periodic end states. This analysis shows that the eigenvalue problem at $\lambda=0$ admits only the trivial solution, provided a suitable nonresonance condition in $\eta$ is satisfied. Since this condition fails only on a measure-zero set, nondegeneracy holds generically. To the best of the authors' knowledge, this is the first analysis of eigenvalue problems associated with pulse solutions possessing long \emph{periodic} plateau states with critical absolute spectrum. Previous works have treated either constant plateau states~\cite{Sandstede2000Gluing,Goh2022} or slowly varying ones~\cite{Carter2021}.

Having established the nondegeneracy of the real-valued dark pulse for fixed $\eta>0$ and $\eps=0$, the implicit function theorem provides a family of dark primary pulse solutions to~\eqref{eq:LLE_main_tw_mod} for sufficiently small $\eps>0$, yielding the proof of Theorem~\ref{thm:main_dark}. We refer to Figure~\ref{fig:dark_construction} for a schematic depiction of the main steps of the construction of the dark primary pulses.

Finally, exploiting the nondegeneracy of the bright and dark primary pulse solutions, we apply~\cite[Theorems~3.1 and~4.1]{Bengel2026Multiple} to concatenate and periodically extend any finite collection of primary pulses, thereby readily obtaining the existence results for soliton-based frequency combs in Corollaries~\ref{cor:bright_FC} and~\ref{cor:dark_FC}. The stability results in Corollary~\ref{cor:bright_FC} follow directly by applying~\cite[Corollaries~6.1 and~7.1 and Theorems~6.2 and~7.2]{Bengel2026Multiple}, which show that the frequency combs inherit the spectral stability properties of their constituent primary pulses.

\section{Existence of bright primary pulse solutions}\label{sec:existence_bright}

In this section, we establish the existence of stationary single-pulse solutions to~\eqref{eq:LLE_main} by bifurcating from the rotated and translated bright NLS soliton~\eqref{bright_sol}, which solves~\eqref{eq:LLE_main} for $\eps=0$. The pulse solutions are supported on a periodic background wave that accounts for the periodic forcing in the equation. In the next section, we prove that the primary pulse solutions are nondegenerate, in the sense that the linearization about the pulse is invertible. This nondegeneracy follows from the breaking of translational and rotational invariance in~\eqref{eq:LLE_main} for $\eps>0$ and allows us to apply~\cite[Theorems~3.1 and~4.1]{Bengel2026Multiple}, yielding the soliton-based frequency-comb solutions stated in Corollary~\ref{cor:bright_FC}.

The main result of this section reads as follows.

\begin{Proposition}[Existence of bright primary pulses] \label{prop:existence_1-pulse}
Fix parameters $\zeta,T > 0$ and $\omega \in \R$. Let $F \in C^1(\R,\C)$ be $T$-periodic. Assume that $(\theta_0,\sigma_0) \in \R^2$ obeys the existence condition $\mathcal{M}_{\theta_0,\sigma_0} = 4\sqrt{\zeta}$ and the nondegeneracy condition $D_{\theta_0,\sigma_0}(0) \neq 0$. Then, there exist constants $C,\eps_0> 0$ such that for all $\eps \in (-\eps_0,\eps_0)$ there exist stationary solutions $\unu_\eps \in H^2(\R) \oplus H_\per^2(0,T)$ and $v_\eps \in H_\per^2(0,T)$ to~\eqref{eq:LLE_main} satisfying~\eqref{eq:approximation_bright} and~\eqref{eq:approximation_background_wave}.
\end{Proposition}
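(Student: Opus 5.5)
We construct the pulse in two stages: first the periodic background wave $v_\eps$, then a localized correction around the shifted and rotated soliton, obtained by Lyapunov--Schmidt reduction.

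\emph{Background wave.} We seek $v_\eps \in H^2_{\R,\per}(0,T)$ solving
\[
0 = v'' - \zeta v + |v|^2 v + \iu\eps(\omega v' - v + F).
\]
Writing $v = \eps w$, this becomes
\[
(-\partial_x^2+\zeta) w = \iu(\omega \eps w' - \eps w + F) + \eps^2|w|^2 w,
\]
with the sign arranged so that $w = -\iu(-\partial_x^2+\zeta)^{-1}F$ at $\eps=0$. The operator $-\partial_x^2+\zeta$ is invertible on $H^2_\per(0,T)$ because $\zeta>0$. The map $u\mapsto|u|^2u$ is $C^1$ over $\R$. Hence the implicit function theorem (or a contraction argument) gives a unique solution $w_\eps$ with $\|w_\eps - w_0\|_{H^2_\per}\lesssim\eps$, which is \eqref{eq:approximation_background_wave}.

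\emph{Ansatz and linearization.} Next we set $u = v_\eps + \phi_\theta(\cdot+\sigma) + \varphi$ with $\varphi \in H^2_\R(\R)$, and use the phase and shift $(\theta,\sigma)$ as additional unknowns near $(\theta_0,\sigma_0)$. Subtracting the equation for $v_\eps$ and the NLS equation for $\phi_\theta$, we get $G(\varphi,\theta,\sigma,\eps)=0$ in $L^2_\R(\R)$. This map is well defined because cross terms like $|v_\eps|^2\phi$ and $v_\eps^2\bar\phi$ are localized, and it is $C^1$. At $\eps=0$ the linearization in $\varphi$ is the NLS linearization about $\phi_\theta(\cdot+\sigma)$. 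In real form this operator is $L_0$ rotated by $\eu^{\iu\theta}$, with $J$ absorbed since we work with the stationary equation. It is Fredholm of index zero with kernel spanned by $\partial_x\phi_\theta$ and $\iu\phi_\theta$ (translation and gauge), by the classical nondegeneracy of the sech soliton. We split $\varphi = \varphi^\perp$ orthogonal to this kernel and project the equation onto the range (orthogonal complement of the kernel, the operator being self-adjoint in $L^2_\R$). The implicit function theorem then solves the range equation for $\varphi^\perp = O(\eps)$, uniformly in $(\theta,\sigma)$.

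\emph{Bifurcation equations and the main obstacle.} The main obstacle is computing the two remaining kernel projections $\langle G,\partial_x\phi_\theta(\cdot+\sigma)\rangle_{L^2_\R}$ and $\langle G,\iu\phi_\theta(\cdot+\sigma)\rangle_{L^2_\R}$, divided by $\eps$, at leading order. The contributions are:
\begin{itemize}
\item the damping term $-\iu\eps\phi$, which contributes $-\eps\|\phi_0\|^2=-4\sqrt\zeta\,\eps$ to the gauge direction, since $\|\phi_0\|_{L^2}^2=4\sqrt\zeta$;
\item the forcing and the background interaction terms, which must combine into $\eps\Re\langle\eu^{-\iu\theta}F(\cdot+\sigma),\phi_0\rangle$ and $\eps\Im\langle\eu^{-\iu\theta}F'(\cdot+\sigma),\phi_0\rangle$;
\item the $\omega\partial_x$ term, which drops out by the parity and orthogonality of $\phi_0$.
\end{itemize}
Checking the second item is the delicate part. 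It requires the identity $L_0$-adjoint$\,\iu\phi=0$ to rewrite $\langle(|v|^2\text{-terms}),\iu\phi\rangle$ via $(-\partial_x^2+\zeta)^{-1}F$ and move the operator onto $\phi$. If done correctly, the reduced system reads $\eps(\mathcal{M}_{\theta,\sigma}-4\sqrt\zeta)+O(\eps^2)=0$, up to taking real and imaginary parts appropriately as two real equations. Dividing by $\eps$, we apply the implicit function theorem at $(\theta_0,\sigma_0,0)$. This needs the Jacobian of $(\theta,\sigma)\mapsto(\Re\mathcal{M},\Im\mathcal{M})$ to be invertible. That Jacobian is (up to signs) the matrix $A_{\theta_0,\sigma_0}$, whose determinant is $D_{\theta_0,\sigma_0}(0)\neq0$.

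\emph{Conclusion.} This yields $(\theta_\eps,\sigma_\eps)=(\theta_0,\sigma_0)+O(\eps)$. Setting $\unu_\eps = v_\eps+\phi_{\theta_\eps}(\cdot+\sigma_\eps)+\varphi^\perp$, and using the Lipschitz dependence of $\phi_\theta(\cdot+\sigma)$ on $(\theta,\sigma)$, we obtain \eqref{eq:approximation_bright}. Negative $\eps$ is handled identically.
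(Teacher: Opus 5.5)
Your proposal is correct and follows the paper's proof. The steps match: an implicit-function-theorem construction of $v_\eps$; a Lyapunov--Schmidt reduction about the rotated and shifted soliton, with phase conditions orthogonal to the kernel spanned by $\phi'$ and $\iu\phi$; division of the two-dimensional bifurcation equation by $\eps$; and a final implicit function theorem in $(\theta,\sigma)$, whose Jacobian determinant equals $\det A_{\theta_0,\sigma_0}=D_{\theta_0,\sigma_0}(0)$. Your use of the kernel relations to move $(-\partial_x^2+\zeta)$ onto the soliton is exactly the paper's use of $(-\partial_x^2+\zeta)^{-1}\phi_0^3=\phi_0$ and $(-\partial_x^2+\zeta)^{-1}(\phi_0^2\phi_0')=\tfrac13\phi_0'$.

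One bookkeeping caveat concerns the shift direction. Because $\mathcal{M}_{\theta,\sigma}$ pairs $F(\cdot+\sigma)$ with $\phi_0$, the ansatz has to be $\phi_\theta(\cdot-\sigma)$, as in the paper's proof where $\phi_{\theta,\sigma}:=\phi_\theta(\cdot-\sigma)$. With your $\phi_\theta(\cdot+\sigma)$, the reduced equation actually comes out as $\mathcal{M}_{\theta,-\sigma}=4\sqrt{\zeta}$, not $\mathcal{M}_{\theta,\sigma}=4\sqrt{\zeta}$. The $+\sigma_0$ in the displayed estimate \eqref{eq:approximation_bright} carries the same sign slip, which is likely where your convention came from.
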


Our strategy for proving Proposition~\ref{prop:existence_1-pulse} is as follows. We first construct a small-amplitude periodic background solution $v_\eps$ to 
\begin{align}\label{eq:LLE_main_tw}
 	-u'' + \zeta u - |u|^2 u + \iu \eps\left(\omega u' - u + F( x ) \right) = 0
\end{align}
by bifurcating from the zero rest state, which solves~\eqref{eq:LLE_main_tw} for $\eps = 0$. 
We then construct the bright primary pulse by substituting the ansatz
\begin{align} \label{eq:sol_ansatz}
    u = \phi_{\theta,\sigma} + v_\eps + w,
\end{align}
with $\phi_{\theta,\sigma} := \phi_\theta(\cdot-\sigma)$ into~\eqref{eq:LLE_main_tw}, where $w \in H^2(\R)$ is a small localized correction accounting for the fact that $\phi_{\theta,\sigma} + v_\eps$ is, in general, not an exact solution of~\eqref{eq:LLE_main_tw}. Finally, using a Lyapunov--Schmidt reduction argument, we show that, for sufficiently small $\eps > 0$, there exist $\theta_0,\sigma_0,w_\eps$, with $\|w_\eps\|_{H^2} \leq C \eps$ for some $\eps$-independent constant $C>0$, such that $
\unu_\eps = \phi_{\theta_0,\sigma_0} + v_\eps + w_\eps$ solves~\eqref{eq:LLE_main_tw}.

\paragraph*{Construction of periodic background wave.} We start with the construction of the periodic background wave $v_\eps$, which arises as a small stationary periodic solution of~\eqref{eq:LLE_main} through a bifurcation from the zero rest state.

\begin{Lemma}[Periodic background wave]\label{lem:small_periodic_wave}
There exist $C,\eps_0>0$ such that for all $\eps \in (-\eps_0,\eps_0)$ there exists a solution $v_\eps \in H_\per^2(0,T)$ to~\eqref{eq:LLE_main_tw} satisfying the bound~\eqref{eq:approximation_background_wave}. Moreover, $(-\eps_0,\eps_0) \to H^2_{\R,\per}(0,T), \eps \mapsto v_\eps$ is smooth.
\end{Lemma}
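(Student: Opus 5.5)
We construct $v_\eps$ by the implicit function theorem applied after factoring out the leading-order behaviour in $\eps$. Since the forcing enters at order $\eps$, we write $v = \eps w$ and substitute into~\eqref{eq:LLE_main_tw}. Dividing by $\eps$ gives the equivalent problem
\begin{align*}
G(w,\eps) := -w'' + \zeta w - \eps^2 |w|^2 w + \iu \eps\left(\omega w' - w\right) + \iu F = 0 ,
\end{align*}
which we pose as a map $G \colon H^2_{\R,\per}(0,T) \times \R \to L^2_{\R,\per}(0,T)$. We work over the field $\R$ so that the cubic term $w \mapsto |w|^2 w$ is smooth. Indeed, it is a real trilinear expression in $(\Re w, \Im w)$, and $H^2_\per(0,T)$ is a Banach algebra embedded in $L^\infty$. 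The remaining terms are affine in $w$ with coefficients polynomial in $\eps$, so $G$ is smooth in $(w,\eps)$.

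At $\eps = 0$ the equation reduces to $(-\partial_x^2+\zeta) w = -\iu F$. The operator $-\partial_x^2+\zeta \colon H^2_\per(0,T) \to L^2_\per(0,T)$ is invertible, since its Fourier symbol is $\zeta + (2\pi k/T)^2 \geq \zeta > 0$. Hence $w_0 = -\iu(-\partial_x^2+\zeta)^{-1}F$ is a solution; it lies in $H^2_\per$ because $F \in C^1$ is periodic and therefore in $L^2_\per$. The derivative is $\partial_w G(w_0,0) = -\partial_x^2 + \zeta$, which is $\R$-linear and invertible. The implicit function theorem therefore yields $\eps_0>0$ and a smooth curve $\eps \mapsto w_\eps \in H^2_{\R,\per}(0,T)$ with $w_{\eps=0} = w_0$ and $G(w_\eps,\eps)=0$ for $|\eps|<\eps_0$.

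We set $v_\eps := \eps w_\eps$, which solves~\eqref{eq:LLE_main_tw} and depends smoothly on $\eps$. Smoothness of $w_\eps$ gives $\|w_\eps - w_0\|_{H^2} \leq C|\eps|$, and multiplying by $\eps$ yields
\begin{align*}
\left\|v_\eps + \iu\eps(-\partial_x^2+\zeta)^{-1}F\right\|_{H^2_\per} = |\eps|\,\|w_\eps - w_0\|_{H^2_\per} \leq C\eps^2 ,
\end{align*}
which is~\eqref{eq:approximation_background_wave}. Because $-\partial_x^2+\zeta$ is uniformly invertible, there is no genuine obstacle in this argument. The only points requiring care are the rescaling $v=\eps w$, which makes the $O(\eps^2)$ estimate immediate, and the real-Banach-space setting, which ensures the Fréchet differentiability of the nonlinearity.
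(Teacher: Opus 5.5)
Your proof is correct and is essentially the paper's argument: both apply the implicit function theorem in the real space $H^2_{\R,\per}(0,T)$, using that $-\partial_x^2+\zeta$ is invertible. The only difference is cosmetic. The paper applies the theorem directly to the unscaled equation at $(v,\eps)=(0,0)$ and then obtains $\partial_\eps v_0=-\iu(-\partial_x^2+\zeta)^{-1}F$ by implicit differentiation and Taylor's theorem. Your rescaling $v=\eps w$ instead builds this leading-order term into the base point $(w_0,0)$, so the $\mathcal{O}(\eps^2)$ bound follows immediately.
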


\begin{proof}
Define the function $\mathcal{F}\colon H_{\R,\per}^2(0,T) \times \R \to L_{\R,\per}^2(0,T)$ given by
\begin{align*}
	\mathcal{F}(v,\eps) = - v'' + \zeta v- |v|^2 v + \iu\eps  ( \omega v' - v +  F ).
\end{align*}
Then, $\mathcal{F}$ is a well-defined map since $v\in H_{\R,\per}^2(0,T)$ implies $|v|^2v \in H_{\R,\per}^2(0,T)$ and $F\in L_{\R,\per}^2(0,T)$. We observe that, $\mathcal{F}(0,0) = 0$ and $\mathcal{F}$ is continuously Fr\'echet differentiable with
\begin{align*}
	\partial_v \mathcal{F}(0,0) = - \partial_x^2 + \zeta .
\end{align*}
Since $\zeta> 0$, the operator $-\partial_x^2 + \zeta \colon H_{\R,\per}^2(0,T) \to L_{\R,\per}^2(0,T)$ is invertible with the bounded inverse $(- \partial_x^2 + \zeta )^{-1}\colon L_{\R,\per}^2(0,T) \to H_{\R,\per}^2(0,T)$ and, thus, the implicit function theorem yields $\eps_0>0$ and a smooth local branch of solutions $v\colon(-\eps_0,\eps_0) \to H_{\R,\per}^2(0,T),\eps \mapsto v_\eps$ such that $v_0=0$, solving the equation
\begin{align*}
	\mathcal{F}(v_\eps, \eps)=0
\end{align*}
uniquely in a neighborhood of $(0,0) \in H_{\R,\per}^2(0,T) \times \R$. Differentiating the equation $\mathcal{F}(v_\eps,\eps)=0$ with respect to $\eps$ and subsequently setting $\eps = 0$ yields
\begin{align*}
	\partial_v \mathcal{F}(0,0)\partial_\eps v_0 + \partial_\eps \mathcal{F}(0,0) = 0, 
\end{align*}
which is equivalent to 
\begin{align*}
	\partial_\eps v_0 = -\iu(- \partial_x^2 + \zeta )^{-1} F.
\end{align*}
By Taylor's theorem we infer the bound $\|v_\eps -\eps \partial_\eps v_0 \|_{H_\per^2(0,T)} \leq C\eps^2$ for some $\eps$-independent constant $C>0$, completing the proof.
\end{proof}

\paragraph*{Equation for the correction.} We substitute the ansatz~\eqref{eq:sol_ansatz} into~\eqref{eq:LLE_main_tw} with periodic background wave $v_\eps \in H_\per^2(0,T)$ from Lemma~\ref{lem:small_periodic_wave}. This results in an equation for the correction $w$ and the parameters $\theta,\sigma$ given by
\begin{align}\label{eq:correction}
	\mathfrak{L}_{\theta,\sigma}w + N(w,\eps,\theta,\sigma) = 0.
\end{align}
Here,
\begin{align*}
	\mathfrak{L}_{\theta,\sigma}\colon H_\R^2(\R) \to L_\R^2(\R),\qquad \mathfrak{L}_{\theta,\sigma} w = - w''+ \zeta w - 2|\phi_{\theta,\sigma}|^2 w- \phi_{\theta,\sigma}^2 \overline{w}
\end{align*}
is the linearization of the NLS equation~\eqref{NLS} about the bright soliton~\eqref{bright_sol} and
\begin{align*}
\begin{split}
	N(w,\eps,\theta,\sigma) = & - 2 |v_\eps+w|^2 \phi_{\theta,\sigma} -(v_\eps+w)^2 \overline{\phi_{\theta,\sigma}}
	-|v_\eps+ w|^2(v_\eps+w) + |v_\eps|^2 v_\eps \\
    &- 2|\phi_{\theta,\sigma}|^2 v_\eps
	- \phi_{\theta,\sigma}^2 \overline{v_\eps} + \iu \eps \left( \omega \phi_{\theta,\sigma}'+\omega w' - \phi_{\theta,\sigma} - w \right)
\end{split}
\end{align*}
denotes the residual. In particular, owing to Lemma~\ref{lem:small_periodic_wave}, there exists a constant $C>0$ such that $\|N(w,\eps,\theta,\sigma)\|_{L^2} \leq C (\|w\|_{L^2}^2 + |\eps|)$ for all $w \in H^2(\R)$ with $\|w\|_{H^1} \leq 1$, $\eps \in (-\eps_0,\eps_0)$, and $\theta,\sigma\in \R$.

\paragraph*{Lyapunov--Schmidt reduction argument.} We solve equation~\eqref{eq:correction} for the correction $w$ and the symmetry parameters $(\theta,\sigma)$ with the aid of a Lyapunov--Schmidt reduction argument. This relies on the following Fredholm properties of the linear operator $\mathfrak{L}_{\theta,\sigma}$, which were established in~\cite[Lemma 1]{Bengel2024Stability}.

\begin{Lemma}[Fredholm properties of linearization] \label{lem:NLS_linearization}
For every $(\theta,\sigma) \in \R^2$ the linear operator $\mathfrak{L}_{\theta,\sigma}$ is self-adjoint and Fredholm of index zero. Its kernel is spanned by $\phi'_{\theta,\sigma}$ and $\iu \phi_{\theta,\sigma}$.
\end{Lemma}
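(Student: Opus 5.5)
By the symmetry $u \mapsto \eu^{\iu\theta}u(\cdot-\sigma)$ of the NLS equation, it suffices to treat $(\theta,\sigma)=(0,0)$. Indeed, writing $(S w)(x) = \eu^{\iu\theta} w(x-\sigma)$, which is a real-linear isometry of $H^2_\R(\R)$ and $L^2_\R(\R)$, one has $\mathfrak{L}_{\theta,\sigma} = S\,\mathfrak{L}_{0,0}\,S^{-1}$. Every property in the statement is invariant under this conjugation, with the kernel vectors transported as $S\phi_0' = \phi_{\theta,\sigma}'$ and $S(\iu\phi_0) = \iu\phi_{\theta,\sigma}$.

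For self-adjointness on $L^2_\R(\R)$, I would write $\mathfrak{L}_{0,0}$ in real coordinates $w = w_1 + \iu w_2$. Since $\phi_0$ is real, this gives the diagonal operator $\mathrm{diag}(L_+,L_-)$, where
\[
L_+ = -\partial_x^2 + \zeta - 3\phi_0^2, \qquad L_- = -\partial_x^2 + \zeta - \phi_0^2 .
\]
Each is a real Schrödinger operator with a bounded smooth potential, hence self-adjoint on $L^2(\R)$ with domain $H^2(\R)$. In particular the term $-\phi^2\overline{w}$ is symmetric with respect to $\Re\int u\overline{v}$.

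For the Fredholm property, the potentials $3\phi_0^2$ and $\phi_0^2$ decay exponentially, so they are relatively compact perturbations of $-\partial_x^2+\zeta$. The latter is invertible from $H^2$ to $L^2$ because $\zeta>0$. Weyl's theorem then shows that $\mathfrak{L}_{0,0}$ is Fredholm of index zero, with essential spectrum $[\zeta,\infty)$.

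The main step is the kernel. Differentiating the profile equation $-\phi_0'' + \zeta\phi_0 - \phi_0^3 = 0$ in $x$ gives $L_+\phi_0' = 0$, and the profile equation itself is $L_-\phi_0 = 0$. Thus $\phi_0'$ and $\iu\phi_0$ lie in the kernel. To show they span it, I would use an ODE argument. Each of $L_\pm u = 0$ is a second-order ODE whose solution space is two-dimensional, and since the potential tends to $\zeta>0$, exactly one dimension decays at $+\infty$ (behaving like $\eu^{-\sqrt{\zeta}x}$). Hence each $L^2$-kernel has dimension at most one. This gives $\ker L_+ = \mathrm{span}\{\phi_0'\}$ and $\ker L_- = \mathrm{span}\{\phi_0\}$. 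Translating back, the kernel of $\mathfrak{L}_{\theta,\sigma}$ is spanned by $\phi_{\theta,\sigma}'$ and $\iu\phi_{\theta,\sigma}$.

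I expect no genuine obstacle; the only care needed is the bookkeeping of real-linearity in the space $L^2_\R$.
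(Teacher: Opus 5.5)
Your argument is correct. The conjugation $\mathfrak{L}_{\theta,\sigma}=S\,\mathfrak{L}_{0,0}\,S^{-1}$, the splitting into $L_+\oplus L_-$, the compact perturbation of the invertible operator $-\partial_x^2+\zeta$, and the dimension count for decaying solutions together form the standard proof of this fact. The dimension count is most quickly made rigorous with the constant Wronskian, which must vanish for two $H^2$-solutions.

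The paper gives no proof of its own and simply imports the lemma from \cite[Lemma~1]{Bengel2024Stability}, so your route is the expected one.
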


In order to solve~\eqref{eq:correction}, we impose two phase conditions on the correction term $w$ in~\eqref{eq:sol_ansatz}, which are of the form
\begin{align}\label{eq:phase_cond}
	\langle w, \phi'_{\theta,\sigma} \rangle_{L^2_\R} = 0, \quad\quad \langle w, \iu \phi_{\theta,\sigma} \rangle_{L^2_\R} = 0.
\end{align}
Note that these conditions are natural, since translations and rotations of the pulse can be absorbed by varying the free parameters $(\theta,\sigma)$. After introducing the orthogonal spectral projection onto $\ker (\mathfrak{L}_{\theta,\sigma})$, given by
\begin{align*}
    P_{\theta,\sigma} \colon H_\R^k(\R) \to H_\R^k(\R), \quad P_{\theta,\sigma} w = 
    \frac{\langle w, \phi'_{\theta,\sigma} \rangle_{L_\R^2}}{\|\phi'_{\theta,\sigma}\|_{L^2}^2}\phi'_{\theta,\sigma} +
    \frac{\langle w, \iu \phi_{\theta,\sigma} \rangle_{L_\R^2}}{\| \phi_{\theta,\sigma}\|_{L^2}^2} \iu \phi_{\theta,\sigma}, \qquad k \in \N_0,
\end{align*}
we can write \eqref{eq:phase_cond} in the compact form
\begin{align*}
	P_{\theta,\sigma} w = 0.
\end{align*}
The complementary projection $P_{\theta,\sigma}^\perp = I- P_{\theta,\sigma}$ satisfies $\mathrm{ran}(P_{\theta,\sigma}^\perp) = \ker(\mathfrak{L}_{\theta,\sigma})^\perp = \mathrm{ran}(\mathfrak{L}_{\theta,\sigma})$. 

We decompose~\eqref{eq:correction} into a non-singular part, a singular part, and the phase condition
\begin{align}
	P_{\theta,\sigma}^\perp \mathfrak{L}_{\theta,\sigma} P_{\theta,\sigma}^\perp w + P_{\theta,\sigma}^\perp N(w,\eps,\theta,\sigma) = 0, \label{eq:nonsing} \\
	P_{\theta,\sigma} N(w,\eps,\theta,\sigma)= 0,\label{eq:sing} \\
    P_{\theta,\sigma}w =0 \label{eq:phase_system}.
\end{align}
We start by solving the non-singular equation~\eqref{eq:nonsing} subject to the phase condition~\eqref{eq:phase_system}.

\begin{Lemma}\label{lem:nonsingular_equation}
Let $(\theta_0,\sigma_0) \in \R^2$. There exist $\eps_0,\delta_0>0$, and a smooth function 
$$
    w \colon (-\eps_0,\eps_0) \times (\theta_0-\delta_0,\theta_0+\delta_0)\times (\sigma_0-\delta_0,\sigma_0+\delta_0) \to H_\R^2(\R), \qquad (\eps,\theta,\sigma)\mapsto w(\eps,\theta,\sigma)
$$
that solves~\eqref{eq:nonsing} and~\eqref{eq:phase_system}. Moreover, we have $w(0,\theta,\sigma)= \partial_\theta w(0,\theta,\sigma) = \partial_\sigma w(0,\theta,\sigma) =0$ for all $(\theta,\sigma) \in (\theta_0-\delta_0,\theta_0+\delta_0)\times (\sigma_0-\delta_0,\sigma_0+\delta_0) $.
\end{Lemma}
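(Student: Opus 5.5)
We solve~\eqref{eq:nonsing} and~\eqref{eq:phase_system} jointly with the implicit function theorem. The complication is that both the projection $P_{\theta,\sigma}$ and the operator $\mathfrak{L}_{\theta,\sigma}$ depend on $(\theta,\sigma)$, so the space in which $w$ lives moves with the parameters. To avoid working in a moving space, we combine the two equations into a single map on the fixed space $H^2_\R(\R)$:
\begin{align*}
\mathcal{G}(w,\eps,\theta,\sigma) = P_{\theta,\sigma}^\perp \mathfrak{L}_{\theta,\sigma} P_{\theta,\sigma}^\perp w + P_{\theta,\sigma}^\perp N(w,\eps,\theta,\sigma) + P_{\theta,\sigma} w,
\end{align*}
which maps $H^2_\R(\R)\times\R^3$ into $L^2_\R(\R)$. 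Because $\mathrm{ran}(P_{\theta,\sigma})$ and $\mathrm{ran}(P^\perp_{\theta,\sigma})$ are complementary, $\mathcal{G}=0$ holds if and only if both~\eqref{eq:nonsing} and~\eqref{eq:phase_system} hold. To see this, apply $P_{\theta,\sigma}$ and $P^\perp_{\theta,\sigma}$ separately to $\mathcal{G}$, and note that $P_{\theta,\sigma}$ maps $L^2_\R$ into $H^2_\R$.

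The first step is to check smoothness of $\mathcal{G}$.
\begin{itemize}
\item Dependence on $(\theta,\sigma)$ enters only through $\phi_{\theta,\sigma}=\eu^{\iu\theta}\phi_0(\cdot-\sigma)$. Since $\phi_0$ is smooth and exponentially localized with all derivatives, the maps $(\theta,\sigma)\mapsto\phi_{\theta,\sigma},\phi'_{\theta,\sigma}$ are smooth into $H^k_\R$. Hence $P_{\theta,\sigma}$ and the potential terms in $\mathfrak{L}_{\theta,\sigma}$ depend smoothly on the parameters in operator norm.
\item The nonlinearity $N$ is a polynomial in $w,\overline{w},v_\eps,\overline{v_\eps}$ and $\phi_{\theta,\sigma}$. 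Every term is either localized, because it contains $\phi_{\theta,\sigma}$ or $w$, or it cancels, as in $-|v_\eps+w|^2(v_\eps+w)+|v_\eps|^2v_\eps$. Using the algebra property of $H^1$, the smooth dependence of $v_\eps\in H^2_\per$ on $\eps$ from Lemma~\ref{lem:small_periodic_wave}, and the fact that products of $H^2_\per$ functions with $H^2(\R)$ functions lie in $H^2(\R)$, we obtain that $N$ is smooth over $\R$.
\item The one term requiring care is $\iu\eps\omega\phi_{\theta,\sigma}'$. It is harmless because $\phi_0\in H^3$.
\end{itemize}

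The second step is to evaluate $\mathcal{G}$ at $\eps=0$. Since $v_0=0$, we get $N(0,0,\theta,\sigma)=0$, so $\mathcal{G}(0,0,\theta,\sigma)=0$ for all $\theta,\sigma$. Moreover, $N$ is quadratic in $w$ at $\eps=0$, so
\begin{align*}
\partial_w\mathcal{G}(0,0,\theta,\sigma)=P^\perp_{\theta,\sigma}\mathfrak{L}_{\theta,\sigma}P^\perp_{\theta,\sigma}+P_{\theta,\sigma}.
\end{align*}

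The third step is to show this operator is an isomorphism $H^2_\R\to L^2_\R$, using Lemma~\ref{lem:NLS_linearization}.
\begin{itemize}
\item Since $\mathfrak{L}_{\theta,\sigma}$ is self-adjoint, the decomposition $P,P^\perp$ reduces it, and the operator becomes block diagonal.
\item The first block is the identity on the two-dimensional kernel.
\item The second block is $\mathfrak{L}_{\theta,\sigma}$ restricted to $\ker^\perp\cap H^2_\R\to\mathrm{ran}(\mathfrak{L}_{\theta,\sigma})=\ker^\perp$. This block is injective. It is also surjective by the Fredholm index zero property together with $\mathrm{ran}(P^\perp)=\mathrm{ran}(\mathfrak{L}_{\theta,\sigma})$. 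The bounded inverse theorem then gives a bounded inverse.
\end{itemize}

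The implicit function theorem, applied at $(0,0,\theta_0,\sigma_0)$, now yields $\eps_0,\delta_0>0$ and a smooth map $w(\eps,\theta,\sigma)$, unique in a neighbourhood of $0$, solving $\mathcal{G}=0$. Uniqueness, combined with $\mathcal{G}(0,0,\theta,\sigma)=0$, gives $w(0,\theta,\sigma)=0$ for all $|\theta-\theta_0|,|\sigma-\sigma_0|<\delta_0$. Differentiating this identity in $\theta$ and $\sigma$ gives the remaining claims $\partial_\theta w(0,\theta,\sigma)=\partial_\sigma w(0,\theta,\sigma)=0$.

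I expect the main obstacle to be the Fréchet smoothness of $N$ jointly in all variables over the real field. In particular, one must handle the periodic non-localized factor $v_\eps$ multiplying localized functions, and confirm that all non-localized contributions cancel so that $N$ indeed takes values in $L^2(\R)$.
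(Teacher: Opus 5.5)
Your proposal is correct and follows essentially the same route as the paper. It uses the same combined map $P_{\theta,\sigma}^\perp \mathfrak{L}_{\theta,\sigma} P_{\theta,\sigma}^\perp w + P_{\theta,\sigma}^\perp N + P_{\theta,\sigma} w$ on the fixed space $H^2_\R(\R)$, the implicit function theorem at $(0,0,\theta_0,\sigma_0)$ with Lemma~\ref{lem:NLS_linearization}, and local uniqueness to get $w(0,\theta,\sigma)=0$ and hence vanishing $\theta$- and $\sigma$-derivatives, while spelling out the smoothness of $N$ and the isomorphism property in more detail than the paper does.
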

\begin{proof}
Let $\eps_0 > 0$ be as in Lemma~\ref{lem:small_periodic_wave}.
Define the function $\mathcal{F}\colon H_\R^2(\R) \times (-\eps_0,\eps_0) \times \R^2 \to L_\R^2(\R)$ by
\begin{align*}
	\mathcal{F}(w,\eps,\theta,\sigma) = P_{\theta,\sigma}^\perp \mathfrak{L}_{\theta,\sigma} P_{\theta,\sigma}^\perp w + P_{\theta,\sigma}^\perp N(w,\eps,\theta,\sigma)+ P_{\theta,\sigma} w.
\end{align*}
Then, we have $\mathcal{F}(0,0,\theta_0,\sigma_0) = 0$, $\mathcal{F}$ is smooth in $(w,\eps,\theta,\sigma)$, and
\begin{align*}
	\partial_w \mathcal{F}(0,0,\theta_0,\sigma_0)\phi = P_{\theta_0,\sigma_0}^\perp \mathfrak{L}_{\theta_0,\sigma_0} P_{\theta_0,\sigma_0}^\perp \phi + P_{\theta_0,\sigma_0}\phi
\end{align*}
is an isomorphism from $H_\R^2(\R)$ onto $L_\R^2(\R)$ by Lemma~\ref{lem:NLS_linearization}, since $P_{\theta_0,\sigma_0}$ is the orthogonal spectral projection of $\mathfrak{L}_{\theta_0,\sigma_0}$ onto $\ker(\mathfrak{L}_{\theta_0,\sigma_0})$. Taking $\eps_0 > 0$ smaller if necessary, the implicit function theorem then provides $\delta_0>0$, an open neighborhood $W \subset H_\R^2(\R)$ of $0$, and a smooth branch of solutions $w\colon (-\eps_0,\eps_0) \times (\theta_0-\delta_0,\theta_0+\delta_0)\times (\sigma_0-\delta_0,\sigma_0+\delta_0) \to W, (\eps,\theta,\sigma) \mapsto w(\eps,\theta,\sigma)$ solving $\mathcal{F}(w,\eps,\theta,\sigma) = 0$ uniquely for $(w,\eps,\theta,\sigma) \in W\times (-\eps_0,\eps_0) \times (\theta_0-\delta_0,\theta_0+\delta_0)\times (\sigma_0-\delta_0,\sigma_0+\delta_0)$. Applying the projection $P_{\theta,\sigma}^\perp$ and $P_{\theta,\sigma}$ to $\mathcal{F}(w(\eps,\theta,\sigma),\eps,\theta,\sigma)=0$ yields
\begin{align*}
	P_{\theta,\sigma}^\perp \mathfrak{L}_{\theta,\sigma} P_{\theta,\sigma}^\perp w(\eps,\theta,\sigma)+P_{\theta,\sigma}^\perp N(w(\eps,\theta,\sigma),\eps,\theta,\sigma) = 0, \qquad
	P_{\theta,\sigma}w(\eps,\theta,\sigma) = 0.
\end{align*}
Finally, by the uniqueness of the solution we find $w(0,\theta,\sigma) =0$ for all $(\theta,\sigma) \in (\theta_0-\delta_0,\theta_0+\delta_0)\times (\sigma_0-\delta_0,\sigma_0+\delta_0)$, which further implies $\partial_\theta w(0,\theta,\sigma) = \partial_\sigma w(0,\theta,\sigma) =0$.
\end{proof}

\paragraph*{Solving the singular equation.} Let $(\theta_0,\sigma_0) \in \R^2$ be a solution to $\mathcal{M}_{\theta_0,\sigma_0} =4 \sqrt{\zeta} $. Assume that $D_{\theta_0,\sigma_0}(0) \neq 0$. Denote by $w(\eps,\theta,\sigma)$ the corresponding solution of~\eqref{eq:nonsing} subject to the phase condition~\eqref{eq:phase_system} obtained in Lemma~\ref{lem:nonsingular_equation}. Substituting $w(\eps,\theta,\sigma)$ into the singular equation~\eqref{eq:sing} leads to the two-dimensional bifurcation equation
\begin{align}\label{eq:bifurcation equation}
	P_{\theta,\sigma} N(w(\eps,\theta,\sigma),\eps,\theta,\sigma) = 0.
\end{align}
Define $g \colon (-\eps_0,\eps_0) \times (\theta_0-\delta_0,\theta_0+\delta_0) \times (\sigma_0-\delta_0,\sigma_0+\delta_0) \to \R^2$ by
$$
    g(\eps,\theta,\sigma) =
    \begin{pmatrix}
        \langle N(w(\eps,\theta,\sigma),\eps,\theta,\sigma) , \phi_{\theta,\sigma}'\rangle_{L_\R^2} \\
        \langle N(w(\eps,\theta,\sigma),\eps,\theta,\sigma) , \iu\phi_{\theta,\sigma}\rangle_{L_\R^2}
    \end{pmatrix}.
$$
Then, $g$ is smooth and the bifurcation equation~\eqref{eq:bifurcation equation} is equivalent to $g(\eps,\theta,\sigma) = 0$. Note that $g(0,\theta,\sigma) = 0$ for all $(\theta,\sigma) \in (\theta_0-\delta_0,\theta_0+\delta_0) \times (\sigma_0-\delta_0,\sigma_0+\delta_0)$ by Lemma~\ref{lem:nonsingular_equation}. Thus, we desingularize the bifurcation equation by defining the smooth function $\tilde g \colon (-\eps_0,\eps_0) \times (\theta_0-\delta_0,\theta_0+\delta_0) \times (\sigma_0-\delta_0,\sigma_0+\delta_0) \to \R^2$ by
$$
    \tilde{g}(\eps,\theta,\sigma) =
    \left\{\begin{array}{ll}
        \eps^{-1} g(\eps,\theta,\sigma), & \eps\neq 0,  \\
         \partial_\eps g(0,\theta,\sigma),& \eps = 0.
    \end{array}\right.
$$
Our goal is to obtain nontrivial solutions of the desingularized bifurcation equation $\tilde{g}(\eps,\theta,\sigma) = 0$. To this end, we compute $\tilde{g}(0,\theta_0,\sigma_0)$.

\begin{Lemma}[Analysis of desingularized bifurcation equation] \label{lem:bifurcation_function}
The function $\tilde{g}=(\tilde{g}_1,\tilde{g}_2)^\top$ satisfies
\begin{align*}
	\tilde{g}_1(0,\theta,\sigma) = \Im\langle \eu^{-\iu \theta} F'(\cdot +\sigma) ,\phi_{0,0} \rangle_{L_\R^2},\qquad
	\tilde{g}_2(0,\theta,\sigma) = -4 \sqrt{\zeta} + \Re \langle\eu^{-\iu \theta}F(\cdot+\sigma), \phi_{0,0}\rangle_{L_\R^2}.
\end{align*}    
\end{Lemma}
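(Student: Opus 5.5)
We compute $\tilde g(0,\theta,\sigma) = \partial_\eps g(0,\theta,\sigma)$ by differentiating $g$ with respect to $\eps$ at $\eps=0$ under the inner product. Since $w(0,\theta,\sigma)=0$ and $v_0=0$, only terms of $N$ that are linear in $(\eps, v_\eps, w)$ survive. All terms in $N$ quadratic or cubic in $v_\eps+w$ have vanishing $\eps$-derivative at $\eps=0$. We are therefore left with
\[
\partial_\eps N(w(\eps,\theta,\sigma),\eps,\theta,\sigma)\big|_{\eps=0} = -2|\phi_{\theta,\sigma}|^2 \partial_\eps v_0 - \phi_{\theta,\sigma}^2\overline{\partial_\eps v_0} - 2|\phi_{\theta,\sigma}|^2 \partial_\eps w - \phi_{\theta,\sigma}^2\overline{\partial_\eps w} + \iu(\omega\phi_{\theta,\sigma}' - \phi_{\theta,\sigma}),
\]
where $\partial_\eps w=\partial_\eps w(0,\theta,\sigma)$. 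The unknown $\partial_\eps w$ is removed by writing $-2|\phi|^2 h - \phi^2\bar h = \mathfrak L_{\theta,\sigma} h + h'' - \zeta h$ and using self-adjointness of $\mathfrak L_{\theta,\sigma}$ on $L^2_\R$ (Lemma~\ref{lem:NLS_linearization}). Since $\phi'_{\theta,\sigma}$ and $\iu\phi_{\theta,\sigma}$ lie in the kernel, pairing against them gives
\[
\langle -2|\phi|^2 h-\phi^2\bar h, k\rangle_{L^2_\R} = \langle (\partial_x^2-\zeta)h, k\rangle_{L^2_\R}
\]
for $k\in\ker\mathfrak L_{\theta,\sigma}$. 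For $h=\partial_\eps w\in H^2$ this gives $\langle h, (\partial_x^2-\zeta)k\rangle$. A cleaner route, which we prefer, is to note that $\mathfrak L h$ pairs to zero with the kernel. So it suffices to use the identity $\langle -2|\phi|^2h-\phi^2\bar h,k\rangle = -\langle(-\partial_x^2+\zeta)h,k\rangle$ for any bounded $h$ and $k$ in the kernel.

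For $h=\partial_\eps v_0=-\iu(-\partial_x^2+\zeta)^{-1}F$, which is periodic and not in $L^2$, we still integrate by parts; this is legitimate since $k$ is exponentially localized. This yields
\[
\langle -2|\phi|^2\partial_\eps v_0-\phi^2\overline{\partial_\eps v_0},k\rangle_{L^2_\R} = -\langle(-\partial_x^2+\zeta)\partial_\eps v_0,k\rangle_{L^2_\R} = \langle \iu F, k\rangle_{L^2_\R}.
\]
For $h=\partial_\eps w$, the same identity gives $-\langle (-\partial_x^2+\zeta)\partial_\eps w,k\rangle$. This equals $-\langle \mathfrak L\partial_\eps w + (2|\phi|^2\partial_\eps w+\phi^2\overline{\partial_\eps w}),k\rangle$, which is circular. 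Instead, we directly use that $\langle \mathfrak L_{\theta,\sigma}\partial_\eps w,k\rangle_{L^2_\R}=0$. We also observe that the $\partial_\eps w$ contribution in $N$ appears only through the combination in the nonlinearity, while $\mathfrak L$ was split off in \eqref{eq:correction}. Hence the $\partial_\eps w$ terms in $\partial_\eps N$ are just the cubic terms' derivatives, which vanish at $w=v=0$. We double-check this against the definition of $N$: the terms $-2|v+w|^2\phi-(v+w)^2\bar\phi$ are quadratic, so only $-2|\phi|^2v_\eps-\phi^2\bar v_\eps$ and the $\iu\eps(\cdot)$ terms are linear. Thus no $\partial_\eps w$ appears, and
\[
\tilde g_j(0,\theta,\sigma) = \big\langle \iu F -2|\phi|^2\partial_\eps v_0 - \phi^2\overline{\partial_\eps v_0} + \iu(\omega\phi'_{\theta,\sigma}-\phi_{\theta,\sigma}) - \iu F,\,k_j\big\rangle_{L^2_\R}+\langle \iu F,k_j\rangle_{L^2_\R},
\]
which reduces to $\langle \iu(F+\omega\phi'_{\theta,\sigma}-\phi_{\theta,\sigma}),k_j\rangle_{L^2_\R}$ by the identity above.

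It remains to evaluate these pairings with $k_1=\phi'_{\theta,\sigma}$ and $k_2=\iu\phi_{\theta,\sigma}$, using $\phi_{\theta,\sigma}=\eu^{\iu\theta}\phi_{0,0}(\cdot-\sigma)$ with $\phi_{0,0}$ real and even. For $k_1$, the term $\langle\iu\omega\phi',\phi'\rangle_{L^2_\R}=\Re(\iu\omega\|\phi'\|^2)=0$ and $\langle\iu\phi,\phi'\rangle_{L^2_\R}=0$. The forcing term gives $\Re\langle \iu F,\phi'\rangle = \Re\,\iu\int F\eu^{-\iu\theta}\phi_{0,0}'(x-\sigma)\,\de x$. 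After integrating by parts and shifting (with the sign convention $\phi_{\theta,\sigma}$ versus $\cdot+\sigma$ matched to the statement), this becomes $\Im\langle\eu^{-\iu\theta}F'(\cdot+\sigma),\phi_{0,0}\rangle$. For $k_2$, the term $\langle \iu\omega\phi',\iu\phi\rangle_{L^2_\R}=\omega\Re\int\phi'\bar\phi=0$, and $\langle-\iu\phi,\iu\phi\rangle_{L^2_\R}=-\|\phi_{0,0}\|^2_{L^2}=-4\sqrt\zeta$, since $\int 2\zeta\sech^2(\sqrt\zeta x)\,\de x=4\sqrt\zeta$. The forcing term gives $\Re\langle F,\phi\rangle = \Re\langle\eu^{-\iu\theta}F(\cdot+\sigma),\phi_{0,0}\rangle$.

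The main obstacle is the bookkeeping: justifying that no $\partial_\eps w$ term survives, and handling the nonlocalized periodic $\partial_\eps v_0$ in the integration by parts. The latter is fine because $\phi_{0,0}$ and its derivatives decay exponentially. Tracking the sign of the shift $\sigma$ is routine.
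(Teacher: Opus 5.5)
Your argument is correct, but it evaluates the $v_1$-terms by a different route than the paper.

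\textbf{The paper's route.} The paper moves the self-adjoint operator $(-\partial_x^2+\zeta)^{-1}$ from $F$ onto the soliton. It then uses two explicit identities obtained from the NLS equation and its derivative:
\[
(-\partial_x^2+\zeta)^{-1}(\phi_{0,\sigma}^2\phi_{0,\sigma}')=\tfrac13\phi_{0,\sigma}', \qquad (-\partial_x^2+\zeta)^{-1}\phi_{0,\sigma}^3=\phi_{0,\sigma}.
\]
After that it needs phase-dependent algebra such as $\Re(2\iu\eu^{-\iu\theta}F-\iu\eu^{\iu\theta}\overline{F})=-3\Im(\eu^{-\iu\theta}F)$.

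\textbf{Your route.} You write $-2|\phi|^2h-\phi^2\bar h=\mathfrak{L}_{\theta,\sigma}h-(-\partial_x^2+\zeta)h$. You kill $\mathfrak{L}_{\theta,\sigma}h$ against the kernel by symmetry; the boundary terms vanish because $v_1,v_1'$ are bounded and the kernel elements decay exponentially. You then use $(-\partial_x^2+\zeta)v_1=-\iu F$ directly.

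\textbf{Comparison.} The paper's two identities are exactly the kernel relations $\mathfrak{L}_{\theta,\sigma}\phi'_{\theta,\sigma}=0$ and $\mathfrak{L}_{\theta,\sigma}(\iu\phi_{\theta,\sigma})=0$ in disguise, so the mathematical content is the same. Your packaging is shorter and avoids the $\theta$-dependent real-part bookkeeping. It also makes transparent that $\tilde g(0,\theta,\sigma)$ is simply the projection of the naive residual $\iu(\omega\phi'_{\theta,\sigma}-\phi_{\theta,\sigma}+F)$ onto $\ker\mathfrak{L}_{\theta,\sigma}$, as if the background wave were absent. The paper's version is more explicit but tied to the specific soliton formulas.

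\textbf{A presentation issue.} Your first display for $\partial_\eps N$ is wrong, because it contains $\partial_\eps w$-terms; delete it together with the ``circular'' detour. The correct reason these terms are absent, which you do eventually reach, has two parts:
\begin{itemize}
\item the part of the nonlinearity linear in $w$ was already absorbed into $\mathfrak{L}_{\theta,\sigma}$ in \eqref{eq:correction};
\item the term $\iu\eps(\omega w'-w)$ has vanishing $\eps$-derivative at $\eps=0$, since $w(0,\theta,\sigma)=0$.
\end{itemize}
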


\begin{proof}
Since $v_0 = 0$ and $w(0,\theta,\sigma) =0$ by Lemmas~\ref{lem:small_periodic_wave} and~\ref{lem:nonsingular_equation}, we observe that
\begin{align*}
	g(\eps,\theta,\sigma) &=
	\begin{pmatrix}
		\eps\langle \iu\omega \phi_{\theta,\sigma}' - \iu \phi_{\theta,\sigma} - 2 |\phi_{\theta,\sigma}|^2 \partial_\eps v_0 - \phi_{\theta,\sigma}^2 \overline{\partial_\eps v_0}, \phi_{\theta,\sigma}' \rangle_{L_\R^2} +\check{g}_1(\eps,\theta,\sigma) \\
		\eps\langle \iu\omega \phi_{\theta,\sigma}' - \iu \phi_{\theta,\sigma} - 2 |\phi_{\theta,\sigma}|^2 \partial_\eps v_0 - \phi_{\theta,\sigma}^2 \overline{\partial_\eps v_0}, \iu \phi_{\theta,\sigma} \rangle_{L_\R^2} +
        \check{g}_2(\eps,\theta,\sigma)
	\end{pmatrix},
\end{align*}
where $\check{g} = (\check{g}_1,\check{g}_2)^\top$ satisfies $\check{g}(0,\theta,\sigma) = \partial_\eps \check{g} (0,\theta,\sigma) = 0$. Thus, it holds
\begin{align*}
	\tilde{g}_1(0,\theta,\sigma) &= \langle - 2 |\phi_{\theta,\sigma}|^2 \partial_\eps v_0 - \phi_{\theta,\sigma}^2 \overline{\partial_\eps v_0} , \phi_{\theta,\sigma}' \rangle_{L_\R^2}, \\
	\tilde{g}_2(0,\theta,\sigma) &= \langle - \iu \phi_{\theta,\sigma} - 2 |\phi_{\theta,\sigma}|^2 \partial_\eps v_0 - \phi_{\theta,\sigma}^2 \overline{\partial_\eps v_0} , \iu \phi_{\theta,\sigma} \rangle_{L_\R^2},
\end{align*}
and it remains to compute the two scalar products.

To compute $\tilde{g}_1(0,\theta,\sigma)$, we first note that $(-\partial_x^2+\zeta)^{-1}$ is self-adjoint. Moreover, inserting $\phi_{0,\sigma}$ into~\eqref{NLS} and differentiating, yields
$$
    -\phi_{0,\sigma}''' + \zeta \phi_{0,\sigma}' - 3 \phi_{0,\sigma}^2\phi_{0,\sigma}' = 0,
$$
which then implies that $(-\partial_x^2 + \zeta)^{-1}(\phi_{0,\sigma}^2\phi_{0,\sigma}') = \phi_{0,\sigma}'/3$. Hence, using~\eqref{eq:approximation_background_wave}, we obtain
\begin{align*}
	\tilde{g}_1(0,\theta,\sigma)  & 
    = \langle 2\iu\eu^{-\iu \theta} F  -  \iu \eu^{\iu \theta} \overline{F} , (-\partial_x^2 + \zeta)^{-1}(\phi_{0,\sigma}^2\phi_{0,\sigma}') \rangle_{L_\R^2} \\
    &= \frac{1}{3}\langle 2\iu\eu^{-\iu \theta} F - \iu \eu^{\iu \theta} \overline{F} ,\phi_{0,\sigma}' \rangle_{L_\R^2} \\
    &= -\frac{1}{3}\langle 2\iu\eu^{-\iu \theta} F'(\cdot +\sigma) - \iu \eu^{\iu \theta} \overline{F'(\cdot +\sigma)} ,\phi_{0,0} \rangle_{L_\R^2}.
\end{align*}
Combining the latter with
\begin{align*}
    \Re(2\iu\eu^{-\iu \theta} F'(\cdot +\sigma) - \iu \eu^{\iu \theta} \overline{F'(\cdot +\sigma)}) 
    = - 3 \Im(\eu^{-\iu \theta} F'(\cdot +\sigma)),
\end{align*}
we arrive at $\tilde{g}_1(0,\theta,\sigma) = \Im\langle \eu^{-\iu \theta} F'(\cdot +\sigma) ,\phi_{0,0} \rangle_{L_\R^2}.$

We proceed with computing $\tilde{g}_2(0,\theta,\sigma)$. Using~\eqref{eq:approximation_background_wave} and the fact that $\phi_{0,\sigma}$ solves~\eqref{NLS}, we infer
\begin{align*}
	\tilde{g}_2(0,\theta,\sigma)
    &=  - \|\phi_{0,0} \|_{L_\R^2}^2 + \langle 2\eu^{-\iu \theta}F- \eu^{\iu \theta}\overline{F} ,  (-\partial_x^2+\zeta)^{-1}(\phi_{0,\sigma}^3)\rangle_{L_\R^2}\\
    &=  - \|\phi_{0,0} \|_{L_\R^2}^2 + \langle 2\eu^{-\iu \theta}F- \eu^{\iu \theta}\overline{F} ,  \phi_{0,\sigma}\rangle_{L_\R^2}\\
    &=  - \|\phi_{0,0} \|_{L_\R^2}^2 + \langle 2\eu^{-\iu \theta}F(\cdot+\sigma)- \eu^{\iu \theta}\overline{F(\cdot+\sigma)} ,  \phi_{0,0}\rangle_{L_\R^2}.
\end{align*}
Combining the latter with $\|\phi_{0,0} \|_{L_\R^2}^2 = 4 \sqrt{\zeta}$ and
$$
    \Re(2\eu^{-\iu \theta}F(\cdot+\sigma)- \eu^{\iu \theta}\overline{F(\cdot+\sigma)})= \Re(\eu^{-\iu \theta}F(\cdot+\sigma)), 
$$
we obtain $\tilde{g}_2(0,\theta,\sigma) = -4 \sqrt{\zeta} + \Re \langle \eu^{-\iu \theta}F(\cdot+\sigma), \phi_{0,0}\rangle_{L_\R^2}$.
\end{proof}

We are now ready to solve the desingularized bifurcation equation with the aid of the implicit function theorem, thereby finishing the proof of Proposition~\ref{prop:existence_1-pulse}.

\begin{proof}[Proof of Proposition~\ref{prop:existence_1-pulse}.]
Let $\eps_0,\delta_0>0$ be as in Lemma~\ref{lem:nonsingular_equation}. Combining the formulas from Lemma~\ref{lem:bifurcation_function} with the fact that $(\theta_0,\sigma_0)\in \R^2$ satisfies the existence condition $\mathcal{M}_{\theta_0,\sigma_0} = 4\sqrt{\zeta}$ and the nondegeneracy condition $D_{\theta_0,\sigma_0}(0) \neq 0$, we obtain
\begin{align*}
	\tilde{g}(0,\theta_0,\sigma_0)=0, \qquad \det\left(\partial_{(\theta,\sigma)} \tilde{g}(0,\theta_0,\sigma_0)\right)\neq0.
\end{align*}
Therefore, the implicit function theorem provides $\eps_1 \in (0,\eps_0)$ and a smooth map 
$$
    (\theta,\sigma) \colon (-\eps_1,\eps_1) \to \R^2, \eps \mapsto (\theta(\eps), \sigma(\eps))
$$ 
with $(\theta(0),\sigma(0)) = (\theta_0,\sigma_0)$, solving the desingularized bifurcation equation $\tilde{g}(\eps,\theta(\eps),\sigma(\eps)) = 0$. We conclude that the triple
\begin{align*}
    (w(\eps,\theta(\eps),\sigma(\eps)),\theta(\eps), \sigma(\eps)) \in H_\R^2(\R) \times \R \times \R
\end{align*}
solves~\eqref{eq:correction} for all $\eps \in (-\eps_1,\eps_1)$. For $\eps \in (-\eps_0,\eps_1)$ we define $w_\eps = w(\eps,\theta(\eps),\sigma(\eps)) + \phi_{\theta(\eps),\sigma(\eps)} - \phi_{\theta_0,\sigma_0} \in H^2(\R)$ and $\unu_\eps = \phi_{\theta_0,\sigma_0} + v_\eps + w_\eps$. Then, $\unu_\eps \in H^2(\R) \oplus H_\per^2(0,T)$ is a solution of~\eqref{eq:LLE_main_tw}, which enjoys the bounds~\eqref{eq:approximation_bright} and~\eqref{eq:approximation_background_wave} by Lemmas~\ref{lem:small_periodic_wave} and~\ref{lem:nonsingular_equation}, thereby completing the proof.
\end{proof}

\section{Stability and nondegeneracy of bright pulse solutions}\label{sec:stability}

Fix parameters $\zeta,T > 0$ and $\omega \in \R$. Let $F \in C^1(\R,\C)$ be $T$-periodic. Let $(\theta_0,\sigma_0) \in \R^2$ be a solution to $\mathcal{M}_{\theta_0,\sigma_0} = 4 \sqrt{\zeta}$ and assume that $D_{\theta_0,\sigma_0}(0) \neq 0$. Let $\unu_\eps \in H^2(\R) \oplus H_\per^2(0,T)$ be a bright single-pulse solution to~\eqref{eq:LLE_main_tw}, whose existence was established in Proposition~\ref{prop:existence_1-pulse}. 

In this section, we analyze the spectrum of the linearization $\El_\eps(\ubu_\eps)-\eps \colon H^2(\R) \to L^2(\R)$ of~\eqref{eq:LLE_main_systems} about $\ubu_\eps = (\Re(\unu_\eps),\Im(\unu_\eps))^\top$. We first establish that the essential spectrum of $\El_\eps(\ubu_\eps)-\eps$ is confined to the open left-half plane. Using Krein index theory, we then prove that spectral stability is decided by the eigenvalues of $\El_\eps(\ubu_\eps)-\eps$ bifurcating from the isolated zero eigenvalue of algebraic multiplicity four associated with the bright NLS soliton. A Lyapunov--Schmidt reduction argument shows that that the leading-order location of these eigenvalues is determined by the zeros of the reduced Evans function $D_{\theta_0,\sigma_0}$.

In particular, our spectral analysis establishes that the bright primary pulse is nondegenerate, i.e., the linearization $\El_\eps(\ubu_\eps)-\eps$ is invertible. We can therefore apply the results of~\cite{Bengel2026Multiple} to concatenate and periodically extend any finite collection of primary pulses. This yields the soliton-based frequency-comb solutions from  Corollary~\ref{cor:bright_FC}, together with their spectral stability properties.  

We start by stating a spectral a-priori bound, which will be used throughout the spectral stability analysis. The result follows along the lines of~\cite[Lemma~2]{Bengel2025Existence} and therefore the proof is omitted.

\begin{Lemma}[Spectal a-priori bound]\label{lem:spectral_aprior_bounds}
Let $\rho > 0$ and $\eps \in \R$. There exist constants $\eta_1,\eta_2>0$ such that for all $\ubu \in L^\infty(\R,\R^2)$ with $\|\ubu\|_{L^\infty} \leq \rho$ we have
$$
    \big(\{\lambda \in \C : |\Re(\lambda)| \geq \eta_1\} \cup \{\lambda \in \C : |\Im(\lambda)| \geq \eta_2, \Re(\lambda) \neq 0\} \big) \cap \sigma(\El_\eps(\ubu)) = \emptyset.
$$
\end{Lemma}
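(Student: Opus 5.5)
Fix $\rho>0$ and $\eps\in\R$. The plan is a direct energy estimate on the eigenvalue/resolvent problem $(\El_\eps(\ubu)-\lambda)\vb = \mathbf{g}$ in $L^2(\R)$, using only the Hamiltonian structure $\El_\eps(\ubu) = JL_\eps(\ubu)$ and the bound $\|\ubu\|_{L^\infty}\le\rho$. Write $L_\eps(\ubu) = L_0 + \mathcal{V} - \eps\omega J\partial_x$ with $L_0 = -\partial_x^2+\zeta$ and $\mathcal{V}$ the symmetric matrix multiplication operator in~\eqref{def:linearization_localized}, so $\|\mathcal{V}\|_{L^\infty}\le 4\rho^2$. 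Since $J^{-1}=-J$, the equation $J L_\eps\vb = \lambda \vb + \mathbf{g}$ is equivalent to $L_\eps \vb = -\lambda J\vb - J\mathbf{g}$. We first treat the case $\mathbf{g}=0$ for $\vb\in H^2(\R)$, which excludes eigenvalues, and then run the same estimates with $\mathbf{g}\neq 0$ to obtain a resolvent bound and injectivity with closed range for $\El_\eps(\ubu)-\lambda$ and its adjoint $-L_\eps J-\bar\lambda$, which is handled symmetrically. This excludes essential spectrum as well.

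\textbf{Real-part bound.} Pair $\El_\eps\vb=\lambda\vb$ with $L_\eps\vb$. Since $\langle JL_\eps\vb, L_\eps\vb\rangle$ is purely imaginary by skew-symmetry of $J$, and $\langle \vb, L_\eps \vb\rangle$ is real, we get $\Re(\lambda)\langle \vb,L_\eps\vb\rangle = 0$. This alone is not quantitative, so instead pair directly with $\vb$. This gives $\Re(\lambda)\|\vb\|^2 = \Re\langle JL_\eps\vb,\vb\rangle = \Re\langle J\mathcal{V}\vb,\vb\rangle + \Re\langle J L_0\vb,\vb\rangle - \eps\omega\Re\langle \partial_x\vb,\vb\rangle$. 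The $L_0$ term is purely imaginary, because $JL_0 = L_0J$ is skew-adjoint ($J$ commutes with scalar operators). The $\partial_x$ term is also purely imaginary. Hence $|\Re\lambda|\le\|J\mathcal{V}\|\le 4\rho^2=:\eta_1'$, and we take $\eta_1 > \eta_1'$. For a resolvent bound, the same computation yields $(|\Re\lambda|-4\rho^2)\|\vb\|\le\|\mathbf{g}\|$.

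\textbf{Imaginary-part bound off the axis.} This is the main step. Suppose $\Re\lambda\neq 0$ and $|\Re\lambda|<\eta_1$. The identity $\Re(\lambda)\langle \vb,L_\eps\vb\rangle=0$ forces $\langle L_\eps\vb,\vb\rangle=0$, that is,
\[
\|\vb'\|^2+\zeta\|\vb\|^2 = -\langle\mathcal{V}\vb,\vb\rangle + \eps\omega\langle J\vb',\vb\rangle \le 4\rho^2\|\vb\|^2 + |\eps\omega|\,\|\vb'\|\,\|\vb\|.
\]
This gives $\|\vb'\|\le C_1\|\vb\|$ with $C_1$ depending only on $\rho,\eps,\omega,\zeta$. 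Next pair $L_\eps\vb = -\lambda J\vb$ with $-J\vb$. The left side is $\langle L_\eps \vb,-J\vb\rangle$, which after integration by parts is bounded by $C(\|\vb'\|^2+\|\vb\|^2)\le C_2\|\vb\|^2$, while the right side equals $\lambda\|\vb\|^2$. Hence $|\lambda|\le C_2$, and we set $\eta_2>C_2$. For the essential spectrum, these same two identities with a forcing $\mathbf{g}$ give an a priori bound $\|\vb\|_{H^1}\le C\|\mathbf{g}\|$ whenever $|\Im\lambda|\ge\eta_2$ and $\Re\lambda\ne0$. Here we expect the main obstacle: the clean cancellation $\langle L_\eps\vb,\vb\rangle=0$ becomes $\Re\lambda\,\langle L_\eps\vb,\vb\rangle = $ (terms in $\mathbf{g}$), which degenerates as $\Re\lambda\to0$. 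Because the lemma only asks for $\lambda\notin\sigma$ rather than a uniform bound, a $\lambda$-dependent constant is acceptable. Alternatively, following~\cite[Lemma~2]{Bengel2025Existence}, we can argue by Weyl-type arguments that $\El_\eps(\ubu)-\lambda$ is Fredholm of index zero there: it is a relatively compact perturbation along a path from $JL_0$ with $\lambda$ large, so injectivity suffices and the essential spectrum is handled by continuity of the index. We would try the a priori route first and fall back to the Fredholm-index continuation.
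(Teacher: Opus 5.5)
Your argument is correct. The paper omits its own proof and refers to \cite[Lemma~2]{Bengel2025Existence}; your route is the standard one for this kind of statement, which uses two ingredients:
\begin{itemize}
\item for the bound on $\Re(\lambda)$: the skew-adjointness of $JL_0+\eps\omega\partial_x$ together with the bounded perturbation $J\mathcal{V}$;
\item for the bound on $\Im(\lambda)$ when $\Re(\lambda)\neq0$: the Hamiltonian identity $\langle L_\eps\vb,\vb\rangle_{L^2}=0$, combined with pairing against $J\vb$.
\end{itemize}
The resolvent version does close. The $\mathbf{g}$-remainder is controlled through $\|L_\eps\vb\|\leq|\lambda|\|\vb\|+\|\mathbf{g}\|$, at a $\lambda$-dependent cost that you correctly note is acceptable. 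Your phrase ``handled symmetrically'' for the adjoint can be made concrete: $\wb\mapsto J\wb$ maps $\ker\big((\El_\eps(\ubu)-\lambda)^*\big)$ into $\ker\big(\El_\eps(\ubu)+\bar\lambda\big)$, and $-\bar\lambda$ lies in the same excluded set, so your eigenvalue bound applies.

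Your fallback argument is flawed and should not be used as stated. For non-decaying $\ubu\in L^\infty(\R,\R^2)$, multiplication by $J\mathcal{V}$ is not relatively compact with respect to $JL_0$, so no Weyl-type argument is available. A homotopy $t\mapsto t\ubu$, $t\in[0,1]$, would work instead: your a priori bound holds uniformly along it, so the semi-Fredholm index stays $0$ from $t=0$, where $\El_\eps(0)-\lambda$ is invertible.
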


For $\eps=0$, the linear operator coincides with the linearization of the NLS equation~\eqref{NLS} about the bright soliton~\eqref{bright_sol}. We exploit the fact that the spectrum and (generalized) eigenfunctions of this operator are known explicitly~\cite{KapitulaPromislow2013}.

\begin{Lemma}[Spectrum of bright NLS soliton]\label{lem:spec_NLS_bright}
We have
$$
    \sigma(\El_0(\ubu_0)) = \{0\} \cup (-\iu \infty,-\iu \zeta] \cup [\iu \zeta,+\iu\infty),
$$
and the isolated eigenvalue $\lambda=0$ has geometric multiplicity two and algebraic multiplicity four. The associated (generalized) eigenfunctions are given by
\begin{align}\label{eq:Jordan_chain}
    \mathbf{p}_0 = \boldsymbol{\phi}'_{\theta_0,\sigma_0}, \qquad 
    \mathbf{p}_1 = \frac{x}{2}J \boldsymbol{\phi}_{\theta_0,\sigma_0}, \qquad
    \mathbf{q}_0 = -J\boldsymbol{\phi}_{\theta_0,\sigma_0}, \qquad 
    \mathbf{q}_1 =\partial_\zeta\boldsymbol{\phi}_{\theta_0,\sigma_0}
\end{align}
and obey the relations
\begin{align*}
    \El_0(\ubu_0) \mathbf{p}_0 = 0, \qquad \El_0(\ubu_0) \mathbf{p}_1 = \mathbf{p}_0, \qquad
    \El_0(\ubu_0) \mathbf{q}_0 = 0, \qquad \El_0(\ubu_0) \mathbf{q}_1 = \mathbf{q}_0.
\end{align*}
\end{Lemma}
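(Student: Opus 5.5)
The proof rests on the explicit structure of $\El_0(\ubu_0) = JL_0(\ubu_0)$. Here $\ubu_0$ is the real form of $\phi_{\theta_0,\sigma_0}$, and $L_0(\ubu_0)$ is the real form of the operator $\mathfrak{L}_{\theta_0,\sigma_0}$ from Lemma~\ref{lem:NLS_linearization}. We remove the phase and the shift by conjugating with the rotation $R_{\theta_0}$, which commutes with $J$, and with the translation by $\sigma_0$. This lets us work with $\phi_0=\sqrt{2\zeta}\sech(\sqrt\zeta x)$, which is real. Then $L_0$ is diagonal, $L_0=\mathrm{diag}(L_+,L_-)$, with
\begin{align*}
L_+=-\partial_x^2+\zeta-3\phi_0^2, \qquad L_-=-\partial_x^2+\zeta-\phi_0^2 .
\end{align*}

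\emph{Essential spectrum.} The potential decays exponentially, so by Weyl's theorem $\sigma_{\rm ess}(\El_0(\ubu_0))$ coincides with the spectrum of the constant-coefficient operator $J(-\partial_x^2+\zeta)$. Taking the Fourier symbol gives $\det(\lambda - J(k^2+\zeta))=\lambda^2+(k^2+\zeta)^2=0$, so $\lambda=\pm\iu(k^2+\zeta)$. These values sweep out exactly $(-\iu\infty,-\iu\zeta]\cup[\iu\zeta,\iu\infty)$.

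\emph{Point spectrum.} Next we show that $0$ is the only eigenvalue off the essential spectrum. For nonzero $\lambda$ we use the classical fact for the cubic NLS, cf.~\cite{KapitulaPromislow2013}: $L_-\ge0$ with $\ker L_-=\mathrm{span}\{\phi_0\}$, and $L_+$ has exactly one negative eigenvalue with $\ker L_+=\mathrm{span}\{\phi_0'\}$. The standard Vakhitov--Kolokolov count then gives $n(L_0)-n_{\rm VK}=1-1=0$, since $\partial_\zeta\|\phi_0\|^2=2/\sqrt\zeta>0$. Krein index counting therefore rules out any eigenvalue with nonzero real part, as well as any purely imaginary eigenvalue of negative Krein signature. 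Purely imaginary eigenvalues of positive signature in the gap $(-\iu\zeta,\iu\zeta)\setminus\{0\}$ are excluded by the explicit spectral theory of the Pöschl--Teller operators. Because the potential is reflectionless, the problem is integrable, and the squared-eigenfunction basis of the linearized NLS is complete; the only discrete eigenvalue is $0$. Here we would cite~\cite{KapitulaPromislow2013} rather than reprove it.

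\emph{Kernel and Jordan chain.} Since $J$ is invertible, $\ker \El_0=\ker L_0=\mathrm{span}\{(\phi_0',0),(0,\phi_0)\}$, and these correspond exactly to $\mathbf{p}_0$ and $\mathbf{q}_0=-J\boldsymbol\phi$ up to the rotation. This gives geometric multiplicity two. For the chain we verify the identities directly:
\begin{itemize}
\item $L_-(x\phi_0)=-2\phi_0'$, so $L_0(\tfrac x2 J\boldsymbol\phi)=\tfrac12 J\cdot(\ldots)$ and hence $\El_0\mathbf{p}_1=\mathbf{p}_0$. The signs come out correctly from $J^2=-1$.
\item Differentiating $-\phi''+\zeta\phi-\phi^3=0$ in $\zeta$ gives $L_+\partial_\zeta\phi_0=-\phi_0$. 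Therefore $\El_0\mathbf{q}_1=J(-\boldsymbol\phi)=\mathbf{q}_0$.
\end{itemize}

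\emph{Algebraic multiplicity exactly four.} It remains to show that both chains terminate. Since $\El_0$ is Fredholm of index zero at $0$, a next vector $\mathbf{r}$ with $\El_0\mathbf{r}=\mathbf{p}_1$ exists only if $\mathbf{p}_1\perp\ker(\El_0^*)=\ker(L_0J^*)=J\ker L_0$. Testing $\mathbf{p}_1$ against $J\mathbf{p}_0$ and $J\mathbf{q}_0$ produces, up to nonzero factors, the quantity $\langle x\phi_0,\phi_0'\rangle=-\tfrac12\|\phi_0\|^2\ne0$. Likewise, testing $\mathbf{q}_1$ gives $\langle\partial_\zeta\phi_0,\phi_0\rangle=\tfrac12\partial_\zeta\|\phi_0\|^2=1/\sqrt\zeta\ne0$. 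Hence both chains have length two and the algebraic multiplicity is four.

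The main obstacle is the exclusion of embedded and gap eigenvalues in the point-spectrum step. That is the only step that is not a routine verification, and for it we rely on the known explicit spectral theory in~\cite{KapitulaPromislow2013}.
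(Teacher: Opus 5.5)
Your proposal is correct and essentially matches the paper, which gives no separate argument for this lemma and instead invokes the explicit spectral theory of the cubic NLS soliton from~\cite{KapitulaPromislow2013}. That is the same source you cite for the only nontrivial step, the exclusion of nonzero eigenvalues. Your direct checks of the essential spectrum, the kernel, the Jordan chains, and their termination are sound, with one small tightening: to get algebraic multiplicity exactly four you need the full $2\times2$ pairing matrix of $\mathbf{p}_1,\mathbf{q}_1$ against $J\mathbf{p}_0,J\mathbf{q}_0$ to be nonsingular, not just each chain obstructed separately, since otherwise some combination $c\mathbf{p}_1+d\mathbf{q}_1$ could lie in the range. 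This holds because that matrix equals $-B$ with $B$ as in~\eqref{eq:explicit_matrices}, which is the matrix $\mathcal{B}$ the paper computes in the appendix.
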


\paragraph*{Essential spectrum.} Using standard spectral perturbation arguments, we show that the essential spectrum of $\El_\eps(\ubu_\eps)-\eps$ is confined to the open left-half plane admitting a spectral gap of size $\eps/2$.

\begin{Lemma}[Analysis of essential spectrum]\label{lem:essential_spectrum}
Provided $\eps>0$ is sufficiently small, we have
$$
    \sigma_\textup{ess}(\El_\eps(\ubu_\eps)-\eps) \subset\left\{\lambda \in \C : \Re(\lambda) \leq -\tfrac{\eps}{2}\right\}.
$$
\end{Lemma}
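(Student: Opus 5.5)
The plan is to reduce the essential spectrum of $\El_\eps(\ubu_\eps)-\eps$ to that of the linearization about the periodic background wave $v_\eps$ alone, and then to locate the latter by Floquet--Bloch theory combined with the Hamiltonian symmetry and the a-priori bound of Lemma~\ref{lem:spectral_aprior_bounds}.

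\textbf{Reduction to the periodic background.} By Proposition~\ref{prop:existence_1-pulse}, $\unu_\eps = v_\eps + \tilde w_\eps$ with $\tilde w_\eps := \phi_{\theta_0,\sigma_0} + w_\eps \in H^2(\R)$. Write $\mathbf{v}_\eps = (\Re(v_\eps),\Im(v_\eps))^\top$. The difference $\El_\eps(\ubu_\eps) - \El_\eps(\mathbf{v}_\eps)$ is a multiplication operator whose coefficients are quadratic expressions in $\ubu_\eps$ and $\mathbf{v}_\eps$ that decay at $\pm\infty$, since $\tilde w_\eps \in H^2(\R) \subset C_0(\R)$. Such an operator is relatively compact with respect to $-\partial_x^2$. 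Weyl's theorem therefore gives
\[
\sigma_\textup{ess}(\El_\eps(\ubu_\eps)-\eps) = \sigma_\textup{ess}(\El_\eps(\mathbf{v}_\eps)-\eps) = \sigma(\El_\eps(\mathbf{v}_\eps)) - \eps,
\]
where the last equality holds because the spectrum of a periodic-coefficient operator on $L^2(\R)$ is purely essential. It thus suffices to show that $\sigma(\El_\eps(\mathbf{v}_\eps)) \subset \{|\Re\lambda| \leq \eps/2\}$.

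\textbf{Perturbation of the constant-coefficient operator.} Since $\|v_\eps\|_{L^\infty} \leq C\eps$ by~\eqref{eq:approximation_background_wave}, the potential in $L_\eps(\mathbf{v}_\eps)$ is $O(\eps^2)$ in $L^\infty$. Hence $\El_\eps(\mathbf{v}_\eps) = J(-\partial_x^2+\zeta - \eps\omega J\partial_x) + \mathcal{B}_\eps$ with $\|\mathcal{B}_\eps\|_{L^2\to L^2} \leq C\eps^2$. The unperturbed part has constant coefficients, and its Fourier symbol $J(k^2+\zeta) - \iu\eps\omega k$ is normal with eigenvalues $\pm\iu(k^2+\zeta) - \iu\eps\omega k$, all purely imaginary.

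\textbf{Main obstacle and how to handle it.} A naive resolvent bound is not directly usable here, because the perturbation is small only relative to the unbounded symbol. I would handle this by combining two facts.

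First, Lemma~\ref{lem:spectral_aprior_bounds} (with $\rho$ fixed, since $\|\mathbf{v}_\eps\|_{L^\infty}$ is bounded) confines any spectrum off the imaginary axis to the compact box $\{|\Re\lambda| < \eta_1,\ |\Im\lambda| < \eta_2\}$.

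Second, on this box the resolvent of the constant-coefficient part is the Fourier multiplier of a normal symbol. Its norm is therefore $1/\operatorname{dist}(\lambda,\text{spectrum}) \leq 1/|\Re\lambda|$. A Neumann series then shows that $\lambda$ lies in the resolvent set of $\El_\eps(\mathbf{v}_\eps)$ whenever $|\Re\lambda| > C\eps^2$. This bound holds uniformly in $\lambda$, so no $\lambda$-uniform relative-boundedness is needed.

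For sufficiently small $\eps$ we have $C\eps^2 < \eps/2$. This gives $\sigma(\El_\eps(\mathbf{v}_\eps)) \subset \{|\Re\lambda| \leq C\eps^2\}$, and shifting by $-\eps$ places the essential spectrum of $\El_\eps(\ubu_\eps)-\eps$ in $\{\Re\lambda \leq -\eps + C\eps^2\} \subset \{\Re\lambda \leq -\eps/2\}$.
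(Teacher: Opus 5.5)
Your proposal is correct and follows essentially the same route as the paper. Weyl's theorem reduces the problem to the periodic background $\vb_\eps$, whose linearization is an $O(\eps^2)$ bounded perturbation of the skew-adjoint operator $\El_\eps(0)$, and the bound $\|(\El_\eps(0)-\lambda)^{-1}\|_{L^2\to L^2}\leq 1/|\Re(\lambda)|$ together with a Neumann series closes the argument. Your detour through Lemma~\ref{lem:spectral_aprior_bounds} is unnecessary, and the ``main obstacle'' you describe does not arise: the perturbation is a bounded operator on $L^2(\R)$ with norm $O(\eps^2)$, so the Neumann series already works for every $\lambda$ with $|\Re(\lambda)|>C\eps^2$, without first restricting to a compact box.
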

\begin{proof}
Let $\vb_\eps = (\Re(v_\eps),\Im(v_\eps))^\top$. By~\cite[Lemma~C.1]{Bengel2026Multiple} the multiplication operator $\El_\eps(\ubu_\eps) - \El_\eps(\vb_\eps) \colon H^2(\R) \to L^2(\R)$ is compact. Hence, using Weyl's theorem, cf.~\cite[Theorem VI.5.26]{Kato1995}, and Floquet theory, cf.~\cite[Section~2.1.3]{KapitulaPromislow2013}, we arrive at
\begin{align} \label{eq:spec_eq}
     \sigma_\text{ess}(\El_\eps(\ubu_\eps)) = \sigma_\text{ess}(\El_\eps(\vb_\eps)) = \sigma(\El_\eps(\vb_\eps)).
\end{align}
We decompose
\begin{align*}
    \El_\eps(\vb_\eps) = \El_\eps(0) +  \El_\eps(\vb_\eps) - \El_\eps(0).
\end{align*}
Using the continuous embedding $H_\per^1(0,T) \hookrightarrow L^\infty(0,T)$ in combination with the bounds of Lemma~\ref{lem:small_periodic_wave}, we find $\eps$-independent constants $C_{1,2}>0$ such that, provided $\eps > 0$ is sufficiently small, we have 
$$
\|v_\eps\|_{L^\infty} \leq C_1 \eps, \qquad    \|\El_\eps(\vb_\eps) - \El_\eps(0)\|_{L^2 \to L^2} \leq C_2 \eps^2.
$$
On the other hand, since $\El_\eps(0)$ is skew-adjoint, Stone's theorem, cf.~\cite[Theorem 3.24]{EngelNagel2000}, yields
\begin{align*}
    \|(\El_\eps(0) - \lambda)^{-1}\|_{L^2 \to L^2} \leq \frac{1}{|\Re(\lambda)|}, \qquad \lambda \not\in \iu \R.
\end{align*}
In particular, for $|\Re(\lambda)|>|\eps|/2$ we obtain
\begin{align*}
    \El_\eps(\vb_\eps)-\lambda = \left(I+ (\El_\eps(\vb_\eps)-\El_\eps(0))(\El_\eps(0)-\lambda)^{-1}\right) (\El_\eps(0)-\lambda)
\end{align*}
by employing the bound
\begin{align*}
    \|(\El_\eps(\vb_\eps)-\El_\eps(0))(\El_\eps(0)-\lambda)^{-1}\|_{L^2\to L^2} \leq 2C_2 \eps.
\end{align*}
Therefore, provided $0\leq\eps< 1/(2C_2)$, the operator $\El_\eps(\vb_\eps)-\lambda$ is invertible for all $\lambda \in \C$ with $|\Re(\lambda)| >|\eps|/2$. Combining the latter with~\eqref{eq:spec_eq} completes the proof. 
\end{proof}

\paragraph*{Point spectrum.} We now turn our attention to the point spectrum of the operator $\El_\eps(\ubu_\eps)-\eps$. We first omit the damping term and analyze the point spectrum of the Hamiltonian linear operator $\El_\eps(\ubu_\eps) = J L_\eps(\ubu_\eps)$. We later reintroduce the damping, which shifts the entire point spectrum by $-\eps$.

A key aspect of understanding the point spectrum of $\El_\eps(\ubu_\eps)$ is analyzing the splitting of the isolated eigenvalue $\lambda = 0$ of multiplicity four of $\El_0(\ubu_0)$ as $\eps > 0$; see Lemma~\ref{lem:spec_NLS_bright}. We will show that this eigenvalue generically splits into four simple eigenvalues and determine their leading-order position. Notably, the splitting behavior depends on the system's parameters of~\eqref{eq:LLE_main_tw}, as well as the rotation $\theta_0$ and the position $\sigma_0$ of the underlying bright soliton~\eqref{bright_sol}. We follow the approach in~\cite[Theorem 4.1]{KapitulaKevrekidis2004} and~\cite[Lemma 7.2.8]{KapitulaPromislow2013}. That is, we expand the solution $\unu_\eps$ in powers of $\eps$ which yields
$$
    \unu_\eps = u_0 + \eps u_1 + \mathcal{O}(\eps^2),
$$
where
\begin{align}\label{eq:expansion_u}
    u_0 = \phi_{\theta_0,\sigma_0}, \qquad
    u_1 = \iu \theta'(0)u_0 + \sigma'(0)u_0' + v_1 + w_1, \qquad
    v_1 = \partial_\eps v_0, \qquad w_1 = \partial_\eps w(0,\theta_0,\sigma_0),
\end{align}
and $w(\eps,\theta,\sigma)$ denotes the map from Lemma~\ref{lem:nonsingular_equation}. This leads to an expansion of the operator $L_\eps(\ubu_\eps) = L_0 + \eps L_1 + \mathcal{O}(\eps^2)$, with $L_0 = L_0(\ubu_0)$ and
$$
    L_1 = - 2
    \begin{pmatrix}
        3\Re(u_0)\Re(u_1) + \Im(u_0) \Im(u_1) &\Re(u_0)\Im(u_1)+ \Re(u_1) \Im(u_0)\\
        \Re(u_0)\Im(u_1)+ \Re(u_1) \Im(u_0)  & \Re(u_0)\Re(u_1) + 3\Im(u_0) \Im(u_1)
    \end{pmatrix} - \omega J \partial_x.
$$
As is described in~\cite[Theorem~4.1]{KapitulaKevrekidis2004}, the matrices
\begin{align}\label{eq:matrices_reduces_evp}
    \mathcal{A} = 
    \begin{pmatrix}
        \langle L_1 \mathbf{p}_0,\mathbf{p}_0\rangle_{L^2} &
        \langle L_1 \mathbf{p}_0,\mathbf{q}_0\rangle_{L^2} \\
        \langle L_1 \mathbf{q}_0,\mathbf{p}_0\rangle_{L^2} &
        \langle L_1 \mathbf{q}_0,\mathbf{q}_0\rangle_{L^2} 
    \end{pmatrix}, \qquad
    \mathcal{B} =
    \begin{pmatrix}
        \langle J\mathbf{p}_1,\mathbf{p}_0\rangle_{L^2} &
        \langle J\mathbf{p}_1,\mathbf{q}_0\rangle_{L^2} \\
        \langle J\mathbf{q}_1,\mathbf{p}_0\rangle_{L^2} &
        \langle J\mathbf{q}_1,\mathbf{q}_0\rangle_{L^2} 
    \end{pmatrix}
\end{align}
then allow us to trace the splitting of the multiplicity-four zero eigenvalue. Here, $\mathbf{p}_{0,1}$ and $\mathbf{q}_{0,1}$ are the (generalized) eigenfunctions of $\El_0$; see Lemma~\ref{lem:spec_NLS_bright}. For the reader's convenience, we recall the corresponding result from~\cite[Theorem~4.1]{KapitulaKevrekidis2004}. Its proof relies on regular perturbation theory and Lyapunov--Schmidt reduction.

\begin{Theorem}[Splitting of zero eigenvalue]\label{thm:eigenvalue_splitting}
There exists $\delta>0$ such that, provided $\eps>0$ is sufficiently small, the set $
B_\delta(0) \cap \sigma(\El_\eps(\ubu_\eps))$
consist of the four eigenvalues
$$
    \lambda = \eps^{1/2}\lambda_1  + \mathcal{O}(\eps)
$$
with eigenvectors $\ub = \left(\mathbf{p}_0 + \eps^{1/2} \lambda_1 \mathbf{p}_1\right) v_1 + \left(\mathbf{q}_0 + \eps^{1/2} \lambda_1 \mathbf{q}_1\right) v_2 + \mathcal{O}(\eps)$, where $\lambda_1 \in \C$ and $\vb = (v_1,v_2)^\top \in \C^2$ solve the bifurcation equation
\begin{align} \label{eq:bif_eq_spec}
    \mathcal{A} \vb = - \lambda_1^2 \mathcal{B} \vb.
\end{align}
The Krein signature of a simple, nonzero, purely imaginary eigenvalue $\lambda = \eps^{1/2}\lambda_1  + \mathcal{O}(\eps)$ is given by $\sign \langle \mathcal{A}\vb,\vb\rangle_{\C^2}$.
\end{Theorem}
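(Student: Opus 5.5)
The plan is to carry out a Lyapunov--Schmidt reduction of the eigenvalue problem $\El_\eps(\ubu_\eps)\ub = \lambda \ub$ onto the four-dimensional generalized kernel of $\El_0(\ubu_0)$ from Lemma~\ref{lem:spec_NLS_bright}, with the scaling $\lambda = \eps^{1/2}\lambda_1$ suggested by the Jordan-block structure. First, since $0$ is an isolated eigenvalue of $\El_0(\ubu_0)$ of algebraic multiplicity four and $\El_\eps(\ubu_\eps) - \El_0(\ubu_0) = \mathcal{O}(\eps)$ in operator norm (relative to $H^2$), I would choose $\delta>0$ with $\overline{B_\delta(0)}\cap\sigma(\El_0(\ubu_0)) = \{0\}$ and use the Riesz projection together with upper semicontinuity of the spectrum (Kato) to conclude that, for $\eps$ small, $B_\delta(0)\cap\sigma(\El_\eps(\ubu_\eps))$ consists of exactly four eigenvalues, counted with algebraic multiplicity, all tending to $0$. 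It then remains to locate them.

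Next, write $\ub = \mathbf{p}_0 v_1 + \mathbf{q}_0 v_2 + \lambda(\mathbf{p}_1 v_1 + \mathbf{q}_1 v_2) + \mathbf{r}$, with $\mathbf{r}$ in the range of the complementary spectral projection, where $\El_0(\ubu_0)$ is invertible. Expanding $L_\eps = L_0 + \eps L_1 + \mathcal{O}(\eps^2)$ and using $\El_0\mathbf{p}_1 = \mathbf{p}_0$ and $\El_0\mathbf{q}_1 = \mathbf{q}_0$, the equation becomes
\begin{align*}
\El_0\mathbf{r} - \lambda\mathbf{r} = \lambda^2(\mathbf{p}_1 v_1 + \mathbf{q}_1 v_2) - \eps J L_1(\mathbf{p}_0 v_1 + \mathbf{q}_0 v_2) + \text{h.o.t.}
\end{align*}
Solving the nonsingular part for $\mathbf{r} = \mathcal{O}(\eps + |\lambda|^2)$ by the implicit function theorem, I then take the $L^2$ pairing with the adjoint kernel. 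Since $\El_0 = JL_0$ with $L_0$ self-adjoint, $\ker \El_0^* = J\ker L_0$. Applying $J^{-1}$ and pairing with $\mathbf{p}_0$ and $\mathbf{q}_0$ annihilates $L_0\mathbf{r}$, because $L_0\mathbf{p}_0 = L_0\mathbf{q}_0 = 0$. This produces the $2\times 2$ system
\begin{align*}
\eps\mathcal{A}\vb + \lambda^2\mathcal{B}\vb + \mathcal{O}(\eps^{3/2})|\vb| = 0,
\end{align*}
where $\mathcal{A}$ and $\mathcal{B}$ are the matrices in~\eqref{eq:matrices_reduces_evp}. The $\lambda$-linear terms drop out because the pairings $\langle J^{-1}\mathbf{p}_0,\mathbf{p}_0\rangle$ and similar vanish by skew-symmetry and parity. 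Substituting $\lambda = \eps^{1/2}\lambda_1$ and dividing by $\eps$ gives $\mathcal{A}\vb = -\lambda_1^2\mathcal{B}\vb + \mathcal{O}(\eps^{1/2})$. A Rouché or implicit-function argument on $\det(\mathcal{A} + \lambda_1^2\mathcal{B}) + \mathcal{O}(\eps^{1/2})$ then yields the four roots, given that $\mathcal{B}$ is invertible; this follows from the explicit Jordan chain, where $\mathcal{B}$ is diagonal with nonzero entries.

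Finally, for the Krein signature of a simple purely imaginary eigenvalue, I would compute $\langle L_\eps\ub,\ub\rangle$ for the eigenvector. Using $L_0\mathbf{p}_0 = L_0\mathbf{q}_0 = 0$ and $L_0\mathbf{p}_1 = J^{-1}\mathbf{p}_0$, $L_0\mathbf{q}_1 = J^{-1}\mathbf{q}_0$, the leading term is
\begin{align*}
\eps\langle\mathcal{A}\vb,\vb\rangle + |\lambda_1|^2\eps\langle\mathcal{B}\vb,\vb\rangle\text{-type terms},
\end{align*}
which combine via $\mathcal{A}\vb = -\lambda_1^2\mathcal{B}\vb$ with $\lambda_1^2<0$ into a positive multiple of $\langle\mathcal{A}\vb,\vb\rangle$. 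The main obstacle is this bookkeeping of the $\mathcal{O}(\eps)$ terms, including the contribution of $\mathbf{r}$, so that no term of the same order is missed. In particular, one must ensure that the correction $\mathbf{r}$ enters the quadratic form only at $\mathcal{O}(\eps^{3/2})$; here I would rely on $\langle L_0\mathbf{r},\mathbf{r}\rangle = \mathcal{O}(\eps^2)$ and on the cross terms vanishing through the kernel relations.
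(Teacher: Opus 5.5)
Your overall strategy matches the Lyapunov--Schmidt argument behind the result the paper simply quotes from Kapitula--Kevrekidis; the paper gives no proof of its own. That strategy consists of counting the four eigenvalues near $0$ with the Riesz projection, reducing onto the generalized kernel with $\lambda=\eps^{1/2}\lambda_1$, and reading off the Krein signature from $\langle L_\eps\ub,\ub\rangle=2\eps\langle\mathcal{A}\vb,\vb\rangle+\mathcal{O}(\eps^{3/2})$. Your Krein computation agrees with the identity $\langle MV,V\rangle=2\langle A_{\theta_0,\sigma_0}\vb,\vb\rangle$ used later in the paper. The reduction step, however, does not close as you set it up.

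\textbf{The gap.} You take $\mathbf{r}\in\mathrm{ran}(I-P)$, with $P$ the Riesz projection onto the four-dimensional generalized kernel, and you freeze the coefficients of $\mathbf{p}_1,\mathbf{q}_1$ at exactly $\lambda v_1,\lambda v_2$. After solving for $\mathbf{r}$, what remains is $P(\El_\eps-\lambda)\ub=0$, which is four scalar equations, not two. Write $\mathbf{k}=\mathbf{p}_0v_1+\mathbf{q}_0v_2$ and $\mathbf{g}=\mathbf{p}_1v_1+\mathbf{q}_1v_2$. Since $P$ commutes with $\El_0$, one gets $P(\El_0-\lambda)\ub=(\El_0-\lambda)(\mathbf{k}+\lambda\mathbf{g})=-\lambda^2\mathbf{g}$, so the condition is
\[
-\lambda^2\mathbf{g}+\eps PJL_1\mathbf{k}+\mathcal{O}(\eps^{3/2})=0 \qquad \text{in } \mathrm{ran}\,P.
\]
Pairing with $\ker\El_0^*=J\ker L_0$ sees only the $\mathbf{p}_1,\mathbf{q}_1$-components, and that gives your $2\times2$ system. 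The reason is that $\mathbf{p}_0=\El_0\mathbf{p}_1$ and $\mathbf{q}_0=\El_0\mathbf{q}_1$ lie in $\mathrm{ran}\,\El_0=(\ker\El_0^*)^\perp$; this, not parity, is also why your $\lambda$-linear terms drop out.

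The $\mathbf{p}_0,\mathbf{q}_0$-components are detected by the generalized adjoint vectors $J\mathbf{p}_1,J\mathbf{q}_1$. They require the $\mathbf{p}_0,\mathbf{q}_0$-components of $\eps PJL_1\mathbf{k}$ to vanish at order $\eps$. There is no reason for this, since $L_1$ carries the forcing through $v_1,w_1$. So in general no eigenvector has the form of your ansatz. The two equations you keep are not the output of a consistent reduction: your unknowns are $\lambda$ and the ratio of $v_1,v_2$, against four conditions.

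\textbf{The repair.} Let the coefficients $c_1,c_2$ of $\mathbf{p}_1,\mathbf{q}_1$ be free unknowns. Equivalently, take $\mathbf{r}$ in the complement $\mathrm{span}\{\mathbf{p}_1,\mathbf{q}_1\}\oplus\mathrm{ran}(I-P)$ of $\ker\El_0$, and project the equation onto $\mathrm{ran}\,\El_0=\mathrm{span}\{\mathbf{p}_0,\mathbf{q}_0\}\oplus\mathrm{ran}(I-P)$ instead of $\mathrm{ran}(I-P)$. Then $\El_0$ maps that complement isomorphically onto $\mathrm{ran}\,\El_0$. The range equation is uniquely solvable with $c_j=\lambda v_j+\mathcal{O}(\eps)$. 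Only the two solvability conditions you wrote remain, now legitimately, in the form $\eps\mathcal{A}\vb+\lambda^2\mathcal{B}\vb=\mathcal{O}(\eps^{3/2})|\vb|$. Because the corrections to the $\mathbf{p}_1,\mathbf{q}_1$-coefficients are $\mathcal{O}(\eps)$, your Krein-signature computation is unaffected.

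Finally, the $\mathcal{O}(\eps)$ accuracy of $\lambda$ that you get from Rouch\'e's theorem needs the corresponding root of $\det(\mathcal{A}+\lambda_1^2\mathcal{B})$ to be simple. At a multiple root the error bound is weaker.
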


The next lemma provides explicit expressions for the matrices $\mathcal{A}$ and $\mathcal{B}$ in the bifurcation equation~\eqref{eq:bif_eq_spec}. We relegate the calculations to Appendix~\ref{app:computations}.

\begin{Lemma}[Analysis of bifurcation equation] \label{lem:formula_for_matrices_AB}
It holds $\mathcal{A} = A_{\theta_0,\sigma_0}$ and $\mathcal{B} = B$, where $A_{\theta,\sigma}$ and $B$ are given by~\eqref{eq:explicit_matrices}. 
\end{Lemma}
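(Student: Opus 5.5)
The plan is to compute the two matrices entry by entry. For $\mathcal{B}$ I use the explicit Jordan chain~\eqref{eq:Jordan_chain}. For $\mathcal{A}$ I avoid expanding $L_1$ explicitly; instead I derive identities for $L_\eps(\ubu_\eps)$ applied to the approximate kernel elements $\ubu_\eps'$ and $J\ubu_\eps$. These identities come from the profile equation and from the translation and gauge structure of the NLS part.

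\emph{The matrix $\mathcal{B}$.} Pointwise, $\boldsymbol{\phi}_{\theta_0,\sigma_0}$, $\boldsymbol{\phi}'_{\theta_0,\sigma_0}$ and $\partial_\zeta\boldsymbol{\phi}_{\theta_0,\sigma_0}$ are all real multiples of the fixed unit vector $(\cos\theta_0,\sin\theta_0)^\top$. Hence $J$ applied to any of them is pointwise orthogonal to all of them, and the two off-diagonal entries of $\mathcal{B}$ vanish. For the diagonal entries:
\begin{itemize}
\item Using $J^2=-I$ and integrating by parts, $\langle J\mathbf{p}_1,\mathbf{p}_0\rangle = -\tfrac12\int x\,\tfrac12(|\phi|^2)'\,\de x = \tfrac14\|\phi_0\|^2_{L^2} = \sqrt{\zeta}$.
\item Similarly, $\langle J\mathbf{q}_1,\mathbf{q}_0\rangle = -\langle\partial_\zeta\boldsymbol{\phi},\boldsymbol{\phi}\rangle = -\tfrac12\partial_\zeta(4\sqrt{\zeta}) = -1/\sqrt{\zeta}$.
\end{itemize}
This gives $\mathcal{B}=B$.

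\emph{The matrix $\mathcal{A}$.} Write the stationary equation~\eqref{eq:LLE_main_systems} as $J(-\ubu_\eps''+\zeta\ubu_\eps-|\ubu_\eps|^2\ubu_\eps)+\eps(\omega\ubu_\eps'-\ubu_\eps+\mathbf{F})=0$. I derive two identities from it.
\begin{itemize}
\item Differentiating in $x$ gives $L_\eps(\ubu_\eps)\ubu_\eps' = \eps J(\mathbf{F}'-\ubu_\eps') + \mathcal{O}(\eps^2)$. Here the $\omega$-term is absorbed, since $L_\eps$ contains $-\eps\omega J\partial_x$.
\item For the gauge direction, the cubic nonlinearity satisfies $D(|\ubu|^2\ubu)J\ubu = J|\ubu|^2\ubu$. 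This gives $L_\eps(\ubu_\eps)J\ubu_\eps = J(-\ubu_\eps''+\zeta\ubu_\eps-|\ubu_\eps|^2\ubu_\eps) - \eps\omega J^2\ubu_\eps'$, which by the equation equals $\eps(\mathbf{F}-\ubu_\eps)+\mathcal{O}(\eps^2)$ up to explicit $\omega$-terms.
\end{itemize}
Expanding both identities to order $\eps$ with $\ubu_\eps=\ubu_0+\eps\ubu_1+\mathcal{O}(\eps^2)$ yields
\[
L_1\mathbf{p}_0 = -L_0\ubu_1' + J(\mathbf{F}'-\mathbf{p}_0)
\]
and an analogous formula for $L_1\mathbf{q}_0$. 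When I pair these with $\mathbf{p}_0$ and $\mathbf{q}_0$, the $L_0\ubu_1'$ and $L_0J\ubu_1$ contributions drop out, because $L_0$ is self-adjoint and annihilates $\mathbf{p}_0,\mathbf{q}_0$. What remains are explicit integrals:
\begin{itemize}
\item The damping terms vanish: $\langle J\mathbf{p}_0,\mathbf{p}_0\rangle=0$, and $\langle\boldsymbol{\phi},J\boldsymbol{\phi}\rangle=0$ pointwise.
\item The $\omega$-terms vanish: $\phi''$ and $\phi'$ are pointwise parallel, so $\langle J\boldsymbol{\phi}'',\boldsymbol{\phi}'\rangle=0$, and the other $\omega$-terms vanish similarly.
\item The forcing terms become $\langle J\mathbf{F}^{(k)},\cdot\rangle$ with $k\in\{0,1\}$. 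After one integration by parts against $\boldsymbol{\phi}'$, these give $-\Im\langle\eu^{-\iu\theta_0}F''(\cdot+\sigma_0),\phi_0\rangle$, $-\Re\langle\eu^{-\iu\theta_0}F'(\cdot+\sigma_0),\phi_0\rangle$ (twice), and $\Im\langle\eu^{-\iu\theta_0}F(\cdot+\sigma_0),\phi_0\rangle$.
\end{itemize}
Where a term $\Re\langle\eu^{-\iu\theta_0}F,\phi_0\rangle$ and a damping term $\|\phi_0\|^2$ both appear, I use the existence condition $\mathcal{M}_{\theta_0,\sigma_0}=4\sqrt{\zeta}$, i.e.\ $\tilde g(0,\theta_0,\sigma_0)=0$ from Lemma~\ref{lem:bifurcation_function}, to cancel them. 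The symmetry of $\mathcal{A}$ then serves as a consistency check of the off-diagonal entries.

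\emph{Main obstacle.} The main obstacle is bookkeeping rather than ideas. I must verify that the $\mathcal{O}(\eps)$ terms coming from the background $v_1$, from $w_1$ and from the parameter drifts $\theta'(0),\sigma'(0)$ all enter only through $L_0(\cdot)$ applied to something. This holds because they are contained in $\ubu_1$, and the derivative identities are exact to first order. I must also track signs when converting between the real system pairing and complex inner products, $\langle J\mathbf{a},\mathbf{b}\rangle = \Re\langle -\iu a,b\rangle$. If the gauge identity gives trouble, the fallback is to compute $L_1$ directly using $u_1$ from~\eqref{eq:expansion_u} together with $(-\partial_x^2+\zeta)^{-1}(\phi^2\phi')=\phi'/3$, as in the proof of Lemma~\ref{lem:bifurcation_function}.
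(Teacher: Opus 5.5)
Your proposal is correct. For $\mathcal{B}$ it matches the paper, but for $\mathcal{A}$ it takes a genuinely different and shorter route.

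\emph{The paper's route.} It expands $\tilde L_1=T^*L_1T$ in complex coordinates and inserts $u_1=\iu\theta'(0)u_0+\sigma'(0)u_0'+v_1+w_1$. It then eliminates $w_1$ by moving the self-adjoint $\mathfrak{L}_{\theta_0,\sigma_0}^{-1}$ onto $|u_0'|^2u_0$, $\iu u_0^2\overline{u_0'}$ and $|u_0|^2u_0$, via $\mathfrak{L}_{\theta_0,\sigma_0}u_0''=6|u_0'|^2u_0$, $\mathfrak{L}_{\theta_0,\sigma_0}(\iu u_0')=2\iu u_0^2\overline{u_0'}$ and $\mathfrak{L}_{\theta_0,\sigma_0}u_0=-2|u_0|^2u_0$. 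Finally it substitutes the explicit $v_1=-\iu(-\partial_x^2+\zeta)^{-1}F$. The $\omega$-terms cancel only at the very end.

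\emph{Your route.} You use translation and gauge equivariance to get the identities
\[ L_\eps(\ubu_\eps)\ubu_\eps'=\eps J(\mathbf{F}'-\ubu_\eps'),\qquad L_\eps(\ubu_\eps)J\ubu_\eps=\eps(\ubu_\eps-\mathbf{F}), \]
both of which are in fact exact. All first-order data ($v_1$, $w_1$, $\theta'(0)$, $\sigma'(0)$) then enter only through $L_0\ubu_1'$ and $L_0J\ubu_1$. These are annihilated by self-adjointness of $L_0$ against $\mathbf{p}_0$ and $\mathbf{q}_0$.

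\emph{Comparison.} Your route needs neither $v_1$ nor $w_1$ explicitly. It makes transparent why $\omega$, the damping and the background drop out of $\mathcal{A}$, and why $\mathcal{A}$ is symmetric. The paper's computation is longer but fully explicit. You still owe a brief justification of the order-$\eps$ expansion: differentiability of $\eps\mapsto\ubu_\eps$, and the integration by parts $\langle L_0\ubu_1',\mathbf{p}_0\rangle=\langle\ubu_1',L_0\mathbf{p}_0\rangle$ even though $\ubu_1$ contains the non-decaying periodic part $v_1$. This is harmless because $\mathbf{p}_0$ and $\mathbf{q}_0$ decay exponentially.

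\emph{A sign correction.} In the gauge identity, the profile equation gives $J(-\ubu_\eps''+\zeta\ubu_\eps-|\ubu_\eps|^2\ubu_\eps)=\eps(\ubu_\eps-\mathbf{F}-\omega\ubu_\eps')$. Also $-\eps\omega J^2\ubu_\eps'=+\eps\omega\ubu_\eps'$. So the $\omega$-terms cancel identically, and the right-hand side is $\eps(\ubu_\eps-\mathbf{F})$, not $\eps(\mathbf{F}-\ubu_\eps)$.

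This matters for two reasons. A leftover $\eps\omega\ubu_\eps'$ would not vanish after pairing: it would add $\pm\omega\|\phi_0'\|_{L^2}^2$ to the $(2,1)$ entry. And with your sign, the $(2,2)$ entry would come out as $-\Im\langle\eu^{-\iu\theta_0}F(\cdot+\sigma_0),\phi_0\rangle_{L^2}$, so your symmetry check would fail.

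With the correct identity, $L_1\mathbf{q}_0=L_0J\ubu_1+\mathbf{F}-\ubu_0$. Hence $\langle L_1\mathbf{q}_0,\mathbf{q}_0\rangle=-\langle\mathbf{F},J\boldsymbol{\phi}_{\theta_0,\sigma_0}\rangle=\Im\langle\eu^{-\iu\theta_0}F(\cdot+\sigma_0),\phi_0\rangle_{L^2}$. Likewise $\langle L_1\mathbf{q}_0,\mathbf{p}_0\rangle=\langle\mathbf{F},\boldsymbol{\phi}'_{\theta_0,\sigma_0}\rangle=-\Re\langle\eu^{-\iu\theta_0}F'(\cdot+\sigma_0),\phi_0\rangle_{L^2}$, as required.

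Finally, no $\|\phi_0\|_{L^2}^2$ term ever arises, because $\langle\ubu_0,\mathbf{p}_0\rangle=\langle\ubu_0,\mathbf{q}_0\rangle=0$. So the existence condition is not needed in this lemma, just as in the paper.
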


According to Theorem~\ref{thm:eigenvalue_splitting} and Lemma~\ref{lem:formula_for_matrices_AB}, roots of the reduced Evans function $D_{\theta_0,\sigma_0}$ correspond to the leading-order coefficients of the eigenvalues of $\El_\eps(\ubu_\eps)$ bifurcating from the zero eigenvalue. As $D_{\theta_0,\sigma_0}(0) \neq 0$, this establishes nondegeneracy of the bright primary pulse $\unu_\eps$. Moreover, a Krein-index counting argument~\cite{KapitulaKevrekidis2004,AddendumKapitulaKevrekidis2004} shows that its spectral stability is decided by the eigenvalues bifurcating from $0$. This leads to the following result.

\begin{Proposition}[Nondegeneracy and spectral stability]\label{prop:spec_stability_1-pulse} Provided $\eps>0$ is sufficiently small, the following assertions hold:
\begin{itemize}
    \item[(i)] The operator $\El_\eps(\ubu_\eps) - \eps$ is invertible.
    \item[(ii)] If $D_{\theta_0,\sigma_0}$ has four distinct purely imaginary roots, then there exists $\mu \in (0,\eps)$ such that $\sigma(\El_\eps(\ubu_\eps) - \eps) \subset \{\lambda \in \C : \Re(\lambda) \leq -\mu\}$. 
    \item[(iii)] If $D_{\theta_0,\sigma_0}$ has a root in $\C \setminus \iu\R$, then there exists $\lambda \in \sigma(\El_\eps(\ubu_\eps) -\eps)$ with $\Re(\lambda) > 0$.
\end{itemize}
\end{Proposition}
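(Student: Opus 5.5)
We will prove Proposition~\ref{prop:spec_stability_1-pulse} by splitting the spectrum of $\El_\eps(\ubu_\eps)$ into three parts: essential spectrum, the four eigenvalues bifurcating from $0$, and all remaining point spectrum. We then shift everything by $-\eps$.

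\textbf{Essential spectrum.} Lemma~\ref{lem:essential_spectrum} already places $\sigma_\textup{ess}(\El_\eps(\ubu_\eps)-\eps)$ in $\{\Re\lambda\le-\eps/2\}$.

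\textbf{Eigenvalues near zero.} By Lemma~\ref{lem:spectral_aprior_bounds}, point spectrum off the imaginary axis lies in the bounded box $|\Re\lambda|<\eta_1$, $|\Im\lambda|<\eta_2$. Inside $B_\delta(0)$, Theorem~\ref{thm:eigenvalue_splitting} together with Lemma~\ref{lem:formula_for_matrices_AB} shows that the eigenvalues are $\lambda=\eps^{1/2}\lambda_1+\mathcal{O}(\eps)$. Here $\lambda_1$ ranges over the roots of $\det(A_{\theta_0,\sigma_0}+\lambda_1^2B)=D_{\theta_0,\sigma_0}(\lambda_1)$.

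Since $D_{\theta_0,\sigma_0}(0)\neq0$, all four roots are nonzero. So these eigenvalues satisfy $|\lambda|\ge c\eps^{1/2}$, and in particular $\eps\notin\sigma(\El_\eps(\ubu_\eps))\cap B_\delta(0)$ for small $\eps$.

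\textbf{Eigenvalues away from zero.} It remains to exclude eigenvalues of $\El_\eps(\ubu_\eps)$ in $\{\Re\lambda\ge 0\}\setminus B_\delta(0)$ and near $\lambda=\eps$. I would use upper semicontinuity of the spectrum. As $\eps\to0$, $\ubu_\eps\to\ubu_0$ in $H^2\oplus H^2_\per$ by Proposition~\ref{prop:existence_1-pulse}, so $\El_\eps(\ubu_\eps)\to\El_0(\ubu_0)$ in the $H^2\to L^2$ operator norm. Lemma~\ref{lem:spec_NLS_bright} gives $\sigma(\El_0(\ubu_0))\subset\iu\R$, so on the compact set $\{|\Re\lambda|\ge\eps\text{-independent},\ \dots\}$ the resolvent persists. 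This already yields invertibility at $\lambda=\eps$, combining the near-zero and away-from-zero analyses, which proves (i).

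\textbf{Statements (ii) and (iii).} Semicontinuity alone does not rule out eigenvalues of $\El_\eps(\ubu_\eps)$ with small positive real part near $\iu\R\setminus B_\delta(0)$, for instance emerging from edges of the essential spectrum $\pm\iu\zeta$ or from embedded eigenvalues. Controlling these is the main obstacle. Here I would invoke the Krein index count of~\cite{KapitulaKevrekidis2004,AddendumKapitulaKevrekidis2004} for the Hamiltonian operator $JL_\eps(\ubu_\eps)$:
\[
k_r+k_c+k_i^-=n(L_\eps(\ubu_\eps))-n(D),
\]
where $n(D)$ is the number of negative eigenvalues of the Gram matrix restricted to the generalized kernel. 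Since $L_0$ has exactly one negative eigenvalue and a two-dimensional kernel that splits for $\eps>0$, $n(L_\eps)$ is determined by the signs of the eigenvalues of $\mathcal A=A_{\theta_0,\sigma_0}$.

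\emph{Case (ii).} If $D_{\theta_0,\sigma_0}$ has four distinct imaginary roots, the four small eigenvalues are simple, purely imaginary by the Hamiltonian symmetry, and carry Krein signatures $\sign\langle\mathcal A\vb,\vb\rangle$. Checking that the count is saturated by these four, i.e.\ that their negative signatures account for $n(L_\eps)$, forces $k_r=k_c=0$ and no other negative-signature eigenvalues. Hence $\sigma(\El_\eps(\ubu_\eps))\subset\iu\R$ apart from the essential spectrum. The eigenvalues are finite in number by the a priori bound, so shifting by $-\eps$ yields a gap $\mu\in(0,\eps)$, with $\mu$ chosen below $\eps/2$.

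\emph{Case (iii).} A root $\lambda_1\notin\iu\R$ gives an eigenvalue $\eps^{1/2}\lambda_1+\mathcal O(\eps)$. By the four-fold symmetry of the spectrum, one of these has real part at least $c\eps^{1/2}$, which exceeds $\eps$ for small $\eps$. Shifting by $-\eps$ then gives an eigenvalue with $\Re\lambda>0$.

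The delicate points are (a) verifying the Krein bookkeeping via the sign structure of $A_{\theta_0,\sigma_0}$ and $B$, and (b) justifying that the count applies for the periodic-background perturbation, where the essential spectrum touches the imaginary axis for the undamped operator.
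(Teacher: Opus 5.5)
Your architecture matches the paper's. You use Lemma~\ref{lem:essential_spectrum} for the essential spectrum, Theorem~\ref{thm:eigenvalue_splitting} with Lemma~\ref{lem:formula_for_matrices_AB} for the four small eigenvalues, and the Krein index count for everything else. Your arguments for (i) and (iii) are essentially the paper's.

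\textbf{The gap is in (ii).} The whole content of (ii) is that the index count is saturated by the four small eigenvalues, and you assert this (``checking that the count is saturated by these four'') and then list it as an unresolved delicate point. Without it, nothing excludes a complex quartet emerging from the essential spectrum $\{|\Im\lambda|\ge\zeta\}$ of $\El_0(\ubu_0)$ with some small positive real part. As you note yourself, semicontinuity cannot rule this out, and the shift by $-\eps$ does not help unless that real part is below $\eps$. Concretely, you must prove two facts:
\begin{itemize}
\item $n(L_\eps(\ubu_\eps))=2$;
\item one of the two small pairs $\pm\iu\eps^{1/2}\omega_j+\mathcal{O}(\eps)$ has negative Krein signature, i.e.\ $\langle A_{\theta_0,\sigma_0}\vb^j,\vb^j\rangle_{\C^2}<0$ for some $j$, where $A_{\theta_0,\sigma_0}\vb^j=\omega_j^2B\vb^j$.
\end{itemize}
In the paper's normalization $k_r+2k_i^-+2k_c=n(L_\eps(\ubu_\eps))$ (kernel trivial for $\eps>0$), these force $k_r=k_c=0$. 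Your displayed formula $k_r+k_c+k_i^-$ gives the same conclusion only if $k_c$ and $k_i^-$ count eigenvalues in both half-planes. If $k_i^-$ counts only $\Im\lambda>0$, then $k_i^-=1<2$ and the count is not saturated.

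\textbf{How to close it.} Both facts are short once stated.
\begin{itemize}
\item Your kernel-splitting remark gives $n(L_\eps(\ubu_\eps))=1+n(A_{\theta_0,\sigma_0})$. This holds because the Gram matrix of $\mathbf{p}_0,\mathbf{q}_0$ is diagonal and positive, and $\det A_{\theta_0,\sigma_0}=D_{\theta_0,\sigma_0}(0)\neq0$. Since $D_{\theta_0,\sigma_0}(\lambda)=\det(B)(\lambda^2+\omega_1^2)(\lambda^2+\omega_2^2)$ with $\det B=-1$, you get $\det A_{\theta_0,\sigma_0}=-\omega_1^2\omega_2^2<0$. Hence $n(A_{\theta_0,\sigma_0})=1$ and $n(L_\eps(\ubu_\eps))=2$.
\item For the signature: since $\omega_1^2\neq\omega_2^2$, the real eigenvectors $\vb^1,\vb^2$ are $B$-orthogonal. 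With $V=(\vb^1|\vb^2)$, the matrix $V^\top BV=\mathrm{diag}(b_1,b_2)$ has determinant $\det(V)^2\det B<0$, so $b_1b_2<0$. Then $\langle A_{\theta_0,\sigma_0}\vb^j,\vb^j\rangle=\omega_j^2b_j$, so exactly one is negative.
\end{itemize}

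\textbf{Comparison with the paper.} Your first step is a genuinely different route for $n(L_\eps)$. The paper gets $1\le n(L_\eps)\le 3$ by perturbation, then proves $k_r=0$ (no real roots of $D_{\theta_0,\sigma_0}$, a compactness argument on $\R$, and the a priori bound), and concludes $n(L_\eps)=2$ by parity. For the signature, the paper goes through the Krein index of the auxiliary $4\times4$ Hamiltonian matrix $\tilde JM$ instead of the direct $B$-orthogonality argument.

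\textbf{Your concern (b).} This is resolved by noting that $\sigma_{\mathrm{ess}}(L_\eps(\ubu_\eps))=\sigma_{\mathrm{ess}}(L_\eps(\vb_\eps))$ stays bounded away from $0$ for small $\eps$, since $\|v_\eps\|_{L^\infty}=\mathcal{O}(\eps)$. That is the hypothesis the index theorem needs; the paper simply cites the theorem.
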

\begin{proof}
First, we observe that $D_{\theta_0,\sigma_0}(0) \neq 0$ implies, provided $\eps > 0$ is sufficiently small, that $0 \not \in \sigma(\El_\eps(\ubu_\eps)-\eps)$ by Theorem~\ref{thm:eigenvalue_splitting} and Lemma~\ref{lem:formula_for_matrices_AB}. This yields the first assertion. 

To prove the second assertion, we first note the essential spectrum of $\El_\eps(\ubu_\eps)-\eps$ is confined to the open left-half plane with spectral gap $\eps/2$ by Lemma~\ref{lem:essential_spectrum}. To analyze the point spectrum of $\El_\eps(\ubu_\eps)$, we employ the Krein index formula~\cite[Theorem 1]{AddendumKapitulaKevrekidis2004}, which reads
\begin{align}\label{eq:Kreinindex_formula}
    k_r(\El_\eps(\ubu_\eps)) + 2 k_i^-(\El_\eps(\ubu_\eps)) + 2 k_c(\El_\eps(\ubu_\eps)) = n(L_\eps(\ubu_\eps)),
\end{align}
where $k_r(\El_\eps(\ubu_\eps))$ is the number of positive real unstable eigenvalues of $\El_\eps(\ubu_\eps)$, $k_i^-(\El_\eps(\ubu_\eps))$ is the number of purely imaginary eigenvalues with positive imaginary part and negative Krein signature, and $k_c(\El_\eps(\ubu_\eps))$ is the number of eigenvalues with positive real and imaginary part, all counted with algebraic multiplicities. Finally, $n(L_\eps(\ubu_\eps))$ is the number of negative eigenvalues of $L_\eps(\ubu_\eps)$, counting multiplicity. 

We show that $k_r(\El_\eps(\ubu_\eps)) = k_c(\El_\eps(\ubu_\eps)) = 0$. To this end, we first establish an upper bound on $n(L_\eps(\ubu_\eps))$ by applying Sturm--Liouville theory, cf.~\cite[Section~2.3]{KapitulaPromislow2013}. Thus, using the fact that the bright soliton~\eqref{bright_sol} has no zeros and precisely one extremum, we obtain that $n(L_0(\ubu_0)) = 1$ and $z(L_0(\ubu_0)) =2$ by Lemma~\ref{lem:spec_NLS_bright}, where $z(L_0(\ubu_0))$ denotes the multiplicity of the zero eigenvalue of $L_0(\ubu_0)$. Regular perturbation theory~\cite[Section~VII.3.2]{Kato1995} then yields, provided $\eps > 0$ is sufficiently small, that $1 \leq n(L_\eps(\ubu_\eps)) \leq 3$. From Theorem~\ref{thm:eigenvalue_splitting}, Lemma~\ref{lem:formula_for_matrices_AB}, and $D_{\theta_0,\sigma_0}(\lambda) \neq 0$ for all $\lambda\in \R$, we obtain $\delta>0$ such that, provided $\eps > 0$ is sufficiently small, we have $(-\delta,\delta) \cap \sigma(\El_\eps(\ubu_\eps)) = \emptyset$. On the other hand, Lemma~\ref{lem:spectral_aprior_bounds} yields an $\eps$-independent constant $C>0$ such that for all $\lambda \in \C$ with $|\Re(\lambda)| \geq C$ we have $\lambda \not\in \sigma(\El_\eps(\ubu_\eps))$. Define the compact set $\mathcal{K}:= [-C,-\delta] \cup [\delta,C]$. By Lemma~\ref{lem:spec_NLS_bright} we have $\sigma(\El_0(\ubu_0)) \cap \mathcal{K} = \emptyset$. A standard perturbation argument based on Neumann series~\cite{Kato1995} then yields, provided $\eps > 0$ is sufficiently small, that $\sigma(\El_\eps(\ubu_\eps)) \cap \mathcal{K} = \emptyset$. Thus, we establish $\sigma(\El_\eps(\ubu_\eps)) \cap \R = \emptyset$ and hence $k_r(\El_\eps(\ubu_\eps)) = 0$. The index formula~\eqref{eq:Kreinindex_formula} then yields that $n(L_\eps(\ubu_\eps))$ is an even number, implying $n(L_\eps(\ubu_\eps)) = 2$.

It remains to prove that $k_i^-(\El_\eps(\ubu_\eps)) = 1$, which then implies $k_c(\El_\eps(\ubu_\eps)) = 0$ by the index formula~\eqref{eq:Kreinindex_formula} as desired.
Let $\pm\iu \smash{\omega_{1/2}} \in \iu\R$ with $\smash{\omega_{1/2}}>0$ be the distinct roots of the function $D_{\theta_0,\sigma_0}$. Let $\vb^{1/2} \in \C^{2}$ be the eigenvectors to the generalized eigenvalue problem
$$
    A_{\theta_0,\sigma_0} \vb = \omega_{1/2}^2 B \vb.
$$
We have to show that one of the scalar products $\langle A_{\theta_0,\sigma_0}\vb^{1/2},\vb^{1/2} \rangle_{\C^2}$ has a negative sign, which then implies by Theorem~\ref{thm:eigenvalue_splitting} and Lemma~\ref{lem:formula_for_matrices_AB} that the corresponding eigenvalue $\lambda = \iu \omega_{1/2} \eps^{1/2} + \mathcal{O}(\eps) \in \iu\R$ of $\El_\eps(\ubu_\eps)$ has negative Krein signature. Note that, since $D_{\theta_0,\sigma_0}$ has four distinct roots, the $\mathcal{O}(\eps)$-remainder in the formula of $\lambda$ is purely imaginary because of the spectral symmetry of the Hamiltonian operator $\El_\eps(\ubu_\eps)$, cf.~\cite[Proposition 5.1.2]{KapitulaPromislow2013}.

We define
$$
    V_{1/2,\pm} =
    \begin{pmatrix}
        \vb^{1/2} \\ \pm \iu \omega_{1/2} B \vb^{1/2}
    \end{pmatrix} \in \C^4, \qquad 
    M =
    \begin{pmatrix}
        A_{\theta_0,\sigma_0} & 0 \\ 0 & B^{-1}
    \end{pmatrix} \in \C^{4\times 4},
$$
where we note that $M^* = M$, and obtain
$$
    \tilde JM V_{1/2,\pm } = \pm \iu \omega_{1/2} V_{1/2,\pm}, \qquad \tilde J =
    \begin{pmatrix}
        0 & \mathrm{Id}_{\C^2} \\
        - \mathrm{Id}_{\C^2}& 0
    \end{pmatrix}
    .
$$
Since $\sigma(\tilde JM) = \{\pm \iu \omega_{1/2}\} \subset \iu \R\setminus\{0\}$ and $1 = n(B^{-1}) \leq n(M)$, we infer from the Krein index formula~\cite[Theorem 7.1.9]{KapitulaPromislow2013} for the Hamiltonian matrix $\tilde JM$ and $k_r(\tilde JM) = k_c(\tilde JM) = 0$ that $n(M)\geq 2$ and $k_i^-(\tilde JM)\geq 1$. Without loss of generality, we can assume that the eigenvalue $\iu \omega_1$ has a negative Krein index. Then, it holds
\begin{align*}
    0>\langle M V_{1,+},V_{1,+}\rangle_{\C^4}
    &= \langle A_{\theta_0,\sigma_0} \vb^1,\vb^1\rangle_{\C^2} + \langle B^{-1} \iu \omega_{1} B \vb^{1},\iu \omega_{1} B \vb^{1}\rangle_{\C^2} \\
    &= \langle A_{\theta_0,\sigma_0} \vb^1,\vb^1\rangle_{\C^2} + \langle \omega_1^2 B \vb^{1}, \vb^{1}\rangle_{\C^2}
    = 2 \langle A_{\theta_0,\sigma_0} \vb^1,\vb^1\rangle_{\C^2}.
\end{align*}
Thus, we arrive at $\langle A_{\theta_0,\sigma_0} \vb^1,\vb^1\rangle_{\C^2}<0$ and, hence, $k_i^- (\El_\eps(\ubu_\eps)) \geq 1$, which, in turn, implies $k_i^- (\El_\eps(\ubu_\eps)) = 1$. This establishes assertion~(ii) upon shifting the spectrum by $-\eps$.

Finally, we prove the assertion (iii). By assumption, there exists a root $\lambda_1 \in \C$ of $D_{\theta_0,\sigma_0}$ with $\Re(\lambda_1) \neq 0$. Theorem~\ref{thm:eigenvalue_splitting} and Lemma~\ref{lem:formula_for_matrices_AB} yield a corresponding eigenvalue $\lambda^{(1)} = \eps^{1/2} \lambda_1 + \mathcal{O}(\eps)$ of the operator $\El_\eps(\ubu_\eps)$. By spectral symmetry of the Hamiltonian operator $\El_\eps(\ubu_\eps)$, we find that $\lambda^{(2)} = |\Re(\lambda^{(1)})| + \iu \Im(\lambda^{(1)}) = \eps^{1/2} \left(|\Re(\lambda_1)| + \iu \Im(\lambda_1)\right) + \mathcal{O}(\eps)$ is also an eigenvalue of $\El_\eps(\ubu_\eps)$. Therefore, $\lambda^{(2)}-\eps \in \sigma(\El_\eps(\ubu_\eps)-\eps)$ is an eigenvalue with strictly positive real part. 
\end{proof}

\paragraph*{Proofs of Theorem~\ref{thm:main_bright} and Corollary~\ref{cor:bright_FC}.} The proof of Theorem~\ref{thm:main_bright} follows directly by combining Propositions~\ref{prop:existence_1-pulse} and~\ref{prop:spec_stability_1-pulse}. In addition, Proposition~\ref{prop:spec_stability_1-pulse} implies that the bright primary pulses constructed in Proposition~\ref{prop:existence_1-pulse} are nondegenerate. We may therefore invoke~\cite[Theorem~3.1]{Bengel2026Multiple} to construct stationary $M$-pulse solutions to~\eqref{eq:LLE_main}, arising near the formal concatenation of $M$ primary pulses. By~\cite[Lemma~3.3, Corollary~6.1, and Theorem~6.2]{Bengel2026Multiple}, together with Lemma~\ref{lem:spectral_aprior_bounds}, these $M$-pulse solutions are nondegenerate and inherit the spectral stability properties of their constituent primary pulses. We then apply~\cite[Theorem~4.1]{Bengel2026Multiple} to periodically extend the $M$-pulse solutions. Finally, Lemma~\ref{lem:spectral_aprior_bounds} and~\cite[Corollary~7.1 and Theorem~7.2]{Bengel2026Multiple} yield that the resulting  stationary periodic $M$-pulse solutions to~\eqref{eq:LLE_main} inherit the spectral stability properties of the underlying $M$-pulse. This completes the proof of Corollary~\ref{cor:bright_FC}.

\section{Existence of dark pulse solutions}\label{sec:existence_black}

Fix parameters $\zeta,T>0$ and $\omega \in \R$, and let $F \in C^1(\R,\R)$ be $T$-periodic. In this section, we construct stationary dark primary pulse solutions to~\eqref{eq:LLE_main_tw_mod}. These solutions arise from formal concatenations of the black NLS soliton~\eqref{black_sol}, which solves~\eqref{eq:LLE_main_tw_mod} for $\eps = \eta = 0$. We show that the resulting dark primary pulse solutions are nondegenerate, in the sense that the linearization about the pulse is invertible. This nondegeneracy allows us to apply results from~\cite{Bengel2026Multiple} to construct the dark soliton-based frequency-comb solutions stated in Corollary~\ref{cor:dark_FC}. 

We rewrite~\eqref{eq:LLE_main_tw_mod} as a system 
\begin{align}\label{eq:LLE_main_tw_mod_sys}
    \ub_t = J(\ub_{xx} + \zeta \ub - |\ub|^2 \ub) + \eps \left(\omega \ub_x -\ub\right) + \eta J\mathbf{F}(x)
\end{align}
in $\ub = (\Re(u),\Im(u))^\top$, where $J$ and $\mathbf{F}$ are defined in~\eqref{eq:defJF}. The linearization $\El_\eps(\ubu) - \eps \colon H^2(\R) \to L^2(\R)$ of~\eqref{eq:LLE_main_tw_mod_sys} about a stationary solution $\ubu = (\unu,0)^\top$ is then given by
\begin{align} \label{eq:defL_eps}
	L_\eps(\ubu) = \begin{pmatrix} L_+(\unu) & 0 \\ 0 & L_-(\unu)\end{pmatrix} - \eps \omega J \partial_x, \qquad \El_\eps(\ubu) = JL_\eps(\ubu),
\end{align}
where $L_\pm(\unu) \colon H^2(\R) \to L^2(\R)$ are the second-order operators
\begin{align} \label{eq:defL+L_}
L_+(\unu) = \partial_x^2 + \zeta - 3\unu^2, \qquad L_-(\unu) = \partial_x^2 + \zeta - \unu^2.
\end{align}

Following the strategy outlined in~\S\ref{sec:method_of_proof} and Figure~\ref{fig:dark_construction}, we first set $\eps = 0$ and restrict attention to real-valued solutions of~\eqref{eq:LLE_main_tw_mod}. Bifurcating from the black NLS soliton~\eqref{black_sol}, we construct nondegenerate real-valued front and back solutions connecting periodic end states. We then apply the results of~\cite{Bengel2026Multiple} to concatenate a front and a back solution, thereby obtaining a nondegenerate real-valued dark primary pulse solution to~\eqref{eq:LLE_main_tw_mod} for $\eps = 0$. Finally, an application of the implicit function theorem with respect to $\eps$ yields nondegenerate dark primary pulse solutions to~\eqref{eq:LLE_main_tw_mod} for $\eps,\eta > 0$.

\begin{figure}[tb]
    \centering
    \includegraphics[width=\textwidth]{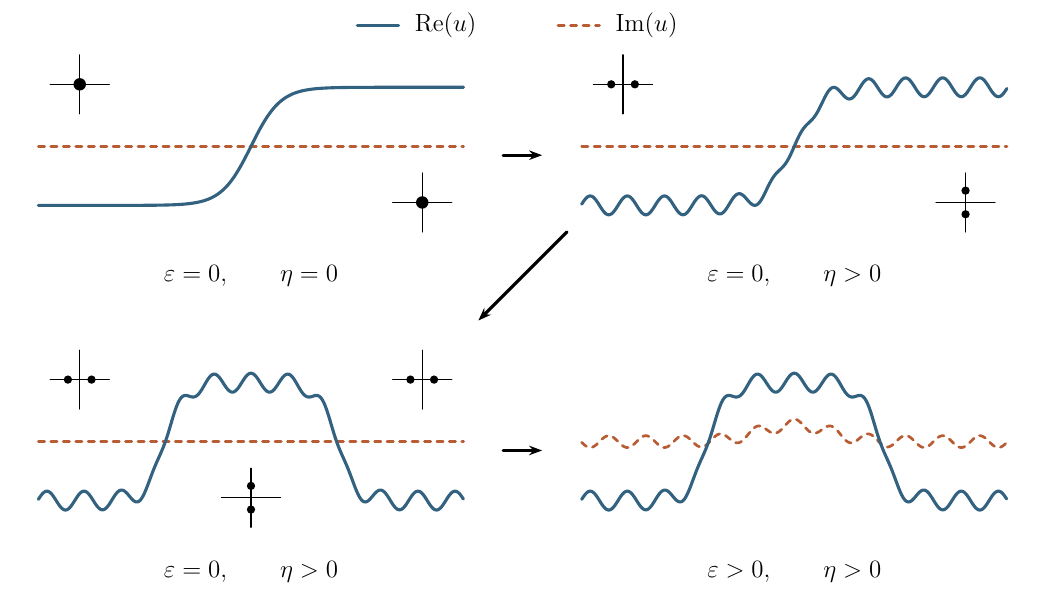}
    \caption{Upper-left panel: the black soliton, or domain wall, solving~\eqref{eq:LLE_main_tw_mod} for $\eps=\eta=0$. Upper-right panel: bifurcating fronts with periodic end states, solving~\eqref{eq:LLE_main_tw_mod} for $\eps=0$ and $\eta>0$. Lower-left panel: a dark primary pulse solving~\eqref{eq:LLE_main_tw_mod} for $\eps=0$ and $\eta>0$, arising near the formal concatenation of a well-separated front and back solution with matching periodic end states. Lower-right panel: bifurcating primary pulses solving~\eqref{eq:LLE_main_tw_mod} for $\eps,\eta>0$. The insets depict the small spatial Floquet exponents~\cite{KapitulaPromislow2013} of the periodic end and plateau states as solutions to~\eqref{eq:LLE_main_tw_mod}. In particular, the origin is in the absolute spectrum~\cite{Sandstede2000} of the long periodic plateau state of the primary dark pulse.  
    }
    \label{fig:dark_construction}
\end{figure}

\paragraph*{Front and back solutions with periodic end states.} Real-valued stationary solutions to~\eqref{eq:LLE_main_tw} for $\eps = 0$ satisfy the equation
\begin{align}\label{eq:LLE_defocusing_stationary_real}
    u'' + \zeta u - u^3 + \eta F(x) = 0.
\end{align}
In the following, we call a solution $\unu \colon \R \to \R$  to~\eqref{eq:LLE_defocusing_stationary_real} \emph{nondegenerate} if the operator $L_+(\unu)$, given by~\eqref{eq:defL+L_}, is invertible.

We observe that $\psi_{+,\sigma} := \psi(\cdot + \sigma)$, where $\psi$ is the black NLS soliton~\eqref{black_sol2}, is a front solution to~\eqref{eq:LLE_defocusing_stationary_real} for $\eta = 0$ connecting $-\sqrt{\zeta}$ to $\sqrt{\zeta}$ for any $\sigma \in \R$. By  symmetry, $\psi_{-,\sigma} := \psi_{\sigma}(-\cdot)$ is a back solution to~\eqref{eq:LLE_defocusing_stationary_real} for $\eta = 0$ connecting $\sqrt{\zeta}$ to $-\sqrt{\zeta}$. The next result shows that these front and back solutions persist for small values of $\eta > 0$ and that the bifurcating front and backs are nondegenerate as solutions to~\eqref{eq:LLE_defocusing_stationary_real}, connecting small periodic end states $v_{\pm}\in H_\per^2(0,T)$ around $\pm\sqrt{\zeta}$. Its proof is analogous to~\cite[Theorem 8.2]{Bengel2026Multiple} and is therefore omitted. 

\begin{Proposition}[Dark front and back solutions] \label{prop:existence_1-front}
Assume that $\sigma_0$ is a simple zero of the Melnikov function $\mathcal{M}$ given by~\eqref{eq:def_Melnikov}. Then, there exist $C,\eta_0>0$ such that for all $\eta \in (0,\eta_0)$ there exists a nondegenerate solution $\underline{\psi}_{\pm,\eta} \colon \R \to \R$ to~\eqref{eq:LLE_defocusing_stationary_real} with 
$$
    \|\underline{\psi}_{\pm,\eta} - \psi_{\pm,\sigma_0}\|_{L^\infty} \leq C \eta, \qquad \underline{\psi}_{\pm,\eta} - v_{\pm,\eta} \in H^2(-1,\infty), \qquad
    \underline{\psi}_{\pm,\eta} - v_{\mp,\eta} \in H^2(-\infty,1),
$$
where $v_{\pm ,\eta} \in H_\per^2(0,T)$ are periodic solutions to~\eqref{eq:LLE_defocusing_stationary_real} with
$$
    \|v_{\pm ,\eta} \mp \sqrt{\zeta} \|_{H_\per^2(0,T)} \leq C \eta.
$$
\end{Proposition}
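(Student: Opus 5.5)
We follow the scheme of Proposition~\ref{prop:existence_1-pulse}: first construct the periodic end states, then correct the shifted front by a Lyapunov--Schmidt reduction in $H^2(\R)$, working in the real-valued setting throughout.

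\emph{Step 1: end states.} Consider $\mathcal{G}(v,\eta) = v'' + \zeta v - v^3 + \eta F$ on $H^2_\per(0,T)\times\R \to L^2_\per(0,T)$ with real-valued functions. We have $\mathcal{G}(\pm\sqrt{\zeta},0)=0$ and $\partial_v\mathcal{G}(\pm\sqrt{\zeta},0) = \partial_x^2 - 2\zeta$, which is invertible. The implicit function theorem therefore gives smooth branches $v_{\pm,\eta}$ satisfying $\|v_{\pm,\eta}\mp\sqrt{\zeta}\|_{H^2_\per}\le C\eta$. By continuity, the constant-coefficient operators $L_+(v_{\pm,\eta})$ stay close to $\partial_x^2-2\zeta$ and remain invertible on $L^2(\R)$, by Floquet theory and a Neumann-series argument as in Lemma~\ref{lem:essential_spectrum}.

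\emph{Step 2: ansatz and linearization.} Let $\chi$ be a smooth cutoff with $\chi=1$ on $[1,\infty)$ and $\chi=0$ on $(-\infty,-1]$. We seek the front in the form
\[
u = \psi_{+,\sigma} + \chi\,(v_{+,\eta}-\sqrt{\zeta}) + (1-\chi)(v_{-,\eta}+\sqrt{\zeta}) + w, \qquad w\in H^2(\R).
\]
The background term then matches the end states up to an $H^2$-localized error of size $O(\eta)$, because $\psi_{+,\sigma}\mp\sqrt{\zeta}$ decays exponentially as $x\to\pm\infty$. The linearization at $\eta=0$ is $L_+(\psi_{+,\sigma}) = \partial_x^2+\zeta-3\psi_{+,\sigma}^2$. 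This operator is self-adjoint, and its essential spectrum lies in $(-\infty,-2\zeta]$, so it is Fredholm of index zero. Sturm--Liouville theory shows that its kernel is spanned by $\psi'_{+,\sigma}$: this function has no zeros, so $0$ is the simple top eigenvalue. Translation invariance is thus the only degeneracy. We impose the phase condition $\langle w,\psi'_{+,\sigma}\rangle_{L^2}=0$ and solve the range equation by the implicit function theorem, exactly as in Lemma~\ref{lem:nonsingular_equation}. This yields $w(\eta,\sigma)$ with $w(0,\sigma)=0$ and $\|w\|_{H^2}\le C\eta$.

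\emph{Step 3: bifurcation equation and nondegeneracy.} The reduced scalar equation is $g(\eta,\sigma)=\langle N,\psi'_{+,\sigma}\rangle$. Since $g(0,\sigma)=0$, we desingularize by $\tilde g = g/\eta$. Computing $\tilde g(0,\sigma)$ requires differentiating the ansatz in $\eta$. The terms coming from $\partial_\eta v_{\pm}$ contribute $(\partial_x^2+\zeta-3\psi^2)$ applied to a bounded, non-localized function, paired with $\psi'$. I expect these to integrate by parts to zero: $L_+\psi'=0$, and the boundary terms vanish because $\psi''$ decays. This should leave $\tilde g(0,\sigma)=\langle F,\psi'_{+,\sigma}\rangle = \mathcal{M}(\sigma)$, up to the change of variable $x\mapsto x-\sigma$ and a sign. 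Handling this computation carefully, with the non-$L^2$ correction, is the main technical point. A cleaner alternative is to write the equation for $u-\psi_{+,\sigma}-\tilde v$ with $\tilde v$ as above, so that all residual terms are localized. The simple zero $\sigma_0$ of $\mathcal{M}$ and the implicit function theorem then give $\sigma(\eta)$, and hence the front $\underline{\psi}_{+,\eta}$ together with the stated $L^\infty$ bound and asymptotics.

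For nondegeneracy, $L_+(\underline{\psi}_{+,\eta})$ is a small perturbation of $L_+(\psi_{+,\sigma_0})$, so it has exactly one small real eigenvalue $\lambda(\eta)$ near $0$, and its essential spectrum stays away from $0$ by Step 1. The standard Lyapunov--Schmidt expansion gives $\lambda(\eta) = c\,\eta\,\mathcal{M}'(\sigma_0)+O(\eta^2)$ with $c\neq 0$, mirroring how $\partial_\sigma\tilde g$ arises from differentiating the equation in $\sigma$. Hence $\lambda(\eta)\neq0$ for small $\eta>0$. The back is obtained by the reflection $x\mapsto -x$, with $F$ replaced by $F(-\cdot)$, or equivalently by repeating the argument for $\psi_{-,\sigma}$, whose Melnikov function is again $\mathcal{M}$ up to a sign.
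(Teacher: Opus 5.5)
Your proposal is correct and is essentially the argument the paper intends. The paper omits the proof with a pointer to \cite[Theorem~8.2]{Bengel2026Multiple}, but it is the Lyapunov--Schmidt/Melnikov scheme of Section~\ref{sec:existence_bright} with exactly your ansatz, cf.\ the decomposition in the proof of Lemma~\ref{lem:exp_convergence_front}; your eigenvalue claim $\lambda(\eta)=c\,\eta\,\mathcal{M}'(\sigma_0)+\mathcal{O}(\eta^2)$ follows at once by pairing $L_+(\underline{\psi}_{+,\eta})\underline{\psi}_{+,\eta}'=-\eta F'$ with the eigenfunction. One bookkeeping correction: $\langle F,\psi'_{+,\sigma}\rangle_{L^2}=\mathcal{M}(-\sigma)$ (a reflected argument, not merely a sign), while the back yields $-\mathcal{M}(\sigma)$, so with the stated conventions the front bifurcates from $\psi_{+,-\sigma_0}$, centered at $x=\sigma_0$ like the back --- a slip already present in the statement rather than a defect of your method.
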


\begin{Remark}
The fronts $\smash{\underline{\psi}_{\pm,\eta}}$ obtained in Proposition~\ref{prop:existence_1-front} are, in general, degenerate as real-valued solutions to~\eqref{eq:LLE_main_tw_mod_sys} for $\eps=0$, in the sense that the linearization $\smash{\El_0(\underline{\boldsymbol{\psi}}_{\pm,\eta})}$ about $\smash{\underline{\boldsymbol{\psi}}_{\pm,\eta}:=(\underline{\psi}_{\pm,\eta},0)^\top}$ is not invertible. The source of this degeneracy is that the origin belongs to the spectrum of the linearization of~\eqref{eq:LLE_main_tw_mod_sys} about either $\smash{\wb_{+,\eta}:=(v_{+,\eta},0)^\top}$ or $\smash{\wb_{-,\eta}:=(v_{-,\eta},0)^\top}$, and hence $0$ lies in the essential spectrum of $\smash{\El_0(\underline{\boldsymbol{\psi}}_{\pm,\eta})}$; see Figure~\ref{fig:dark_construction}. We overcome this obstruction by concatenating a front and back solution with matching end states, thereby constructing a pulse solution whose asymptotic state is either $v_{-,\eta}$ or $v_{+,\eta}$. The essential spectrum of the corresponding linearization is then determined solely by the spectrum of the linearization of~\eqref{eq:LLE_main_tw_mod_sys} about $\wb_{-,\eta}$ or $\wb_{+,\eta}$, respectively; see Figure~\ref{fig:dark_construction}. We will show that this ultimately allows us to construct nondegenerate pulse solutions to~\eqref{eq:LLE_main_tw_mod_sys}.
\end{Remark}

We show that the front and back solutions from Proposition~\ref{prop:existence_1-front} converge exponentially fast to their periodic end states. We will need this fact for the proof of the nondegeneracy of the dark primary pulse as a solution to~\eqref{eq:LLE_main_tw_mod_sys}. 

\begin{Lemma}[Exponential convergence to periodic end states]\label{lem:exp_convergence_front}
Let $\smash{\underline{\psi}_{\pm,\eta}}$ be a solution to~\eqref{eq:LLE_defocusing_stationary_real}, obtained in Proposition~\ref{prop:existence_1-front}. There exist $\eta_1 \in (0,\eta_0)$ and $\mu >0$ such that for all $\eta \in (0,\eta_1)$ we have
$$
    \lim_{x \to \infty} \left(|\underline{\psi}_{\pm,\eta}(x)-v_{\pm,\eta}(x)|+|\underline{\psi}_{\pm,\eta}'(x)-v_{\pm ,\eta}'(x)|\right) \eu^{\mu x} = 0
$$
and
$$
    \lim_{x \to -\infty} \left(|\underline{\psi}_{\pm,\eta}(x)-v_{\mp,\eta}(x)|+|\underline{\psi}_{\pm,\eta}'(x)-v_{\mp ,\eta}'(x)|\right) \eu^{-\mu x} = 0.
$$
\end{Lemma}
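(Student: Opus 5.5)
Consider the difference $z := \underline{\psi}_{+,\eta} - v_{+,\eta}$ on $[0,\infty)$; the back and the limit $x \to -\infty$ are handled in the same way, using the symmetry $x \mapsto -x$. Subtracting the equations~\eqref{eq:LLE_defocusing_stationary_real} satisfied by $\underline{\psi}_{+,\eta}$ and $v_{+,\eta}$ gives
\begin{align*}
z'' + \big(\zeta - 3 v_{+,\eta}^2\big) z = 3 v_{+,\eta} z^2 + z^3 .
\end{align*}
This is a first-order system $Z' = A_\eta(x) Z + R(x,Z)$ in $Z = (z,z')^\top$. The coefficient matrix $A_\eta$ is $T$-periodic, and the remainder is quadratic, $|R(x,Z)| \leq C|Z|^2$ for $|Z| \leq 1$.

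The first step is to show that the linear periodic system $Z' = A_\eta(x) Z$ has a uniform exponential dichotomy on $\R$. At $\eta = 0$ we have $v_{+,0} = \sqrt{\zeta}$, so the equation becomes $z'' - 2\zeta z = 0$, with constant hyperbolic spatial eigenvalues $\pm\sqrt{2\zeta}$. By Proposition~\ref{prop:existence_1-front}, $\|v_{+,\eta} - \sqrt{\zeta}\|_{L^\infty} \leq C\eta$, since $H^2_\per \hookrightarrow L^\infty$. Roughness of exponential dichotomies (or continuity of the Floquet multipliers of the monodromy matrix) then gives the following for $\eta \in (0,\eta_1)$: there are Floquet exponents $\pm\nu_\eta$ with $\nu_\eta \geq \sqrt{2\zeta}/2$, and an exponential dichotomy with rate, say, $2\mu := \sqrt{2\zeta}/2$ and constants that are uniform in $\eta$.

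The second step is a stable-manifold argument. Since $\underline{\psi}_{+,\eta} - v_{+,\eta} \in H^2(-1,\infty)$, we know $Z(x) \to 0$ as $x \to \infty$, because $H^1(-1,\infty)$ functions decay. Choose $x_0$ so large that $\sup_{x \geq x_0}|Z(x)| \leq \delta$. On $[x_0,\infty)$, any bounded solution satisfies the variation-of-constants formula
\begin{align*}
Z(x) = \Phi(x,x_0) P^s(x_0) Z(x_0) + \int_{x_0}^x \Phi^s(x,y) R(y,Z(y))\,\de y - \int_x^\infty \Phi^u(x,y) R(y,Z(y))\,\de y .
\end{align*}
Take $\delta$ small and use the weighted norm $\sup_{x \geq x_0} \eu^{\mu'(x-x_0)}|Z(x)|$ with $\mu < \mu' < 2\mu$. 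A standard contraction or Gronwall-type estimate then shows $|Z(x)| \leq C\eu^{-\mu'(x - x_0)}$. This yields $|Z(x)|\eu^{\mu x} \to 0$, which is exactly the claimed limit, since $|Z|$ controls both $|z|$ and $|z'|$.

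The main obstacle is uniformity: $\mu$ must be independent of $\eta$. The $\eta$-dependent thresholds $x_0$ are harmless, since only the limit is claimed. This uniformity is exactly where the first step is needed, i.e.\ that the small periodic perturbation $3(v_{+,\eta}^2 - \zeta)$ does not destroy hyperbolicity. I would verify it via the monodromy matrix: it is $\mathcal{O}(\eta)$-close to $\exp(T\,\mathrm{diag}\text{-similar}(\pm\sqrt{2\zeta}))$, so its multipliers stay bounded away from the unit circle uniformly in $\eta$.
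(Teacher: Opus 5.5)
Your argument is correct, but it takes a different route from the paper's.

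\textbf{The paper's route.} The paper does not study $\underline{\psi}_{+,\eta}-v_{\pm,\eta}$ directly. It writes $\underline{\psi}_{+,\eta}$ as the NLS front $\psi_{+,\sigma_0}$, plus the cut-off end-state corrections $\chi_\mp(v_{\mp,\eta}\pm\sqrt{\zeta})$, plus a localized remainder $w_\eta\in H^2(\R)$. It then uses the a priori bound $\|w_\eta\|_{L^\infty}+\|w_\eta'\|_{L^\infty}\leq C\eta$ to absorb all nonlinear terms into an $\mathcal{O}(\eta)$ coefficient. The result is a \emph{linear} inhomogeneous system $W'=(A^{(0)}+A^{(1)}_\eta+A^{(2)}_\eta)W+R_\eta$ on all of $\R$. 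Here $A^{(0)}$ is constant and hyperbolic, $\|A^{(1)}_\eta\|\leq C\eta$, and $A^{(2)}_\eta$, $R_\eta$ are exponentially localized. One application of roughness to $A^{(0)}+A^{(1)}_\eta$, followed by a convolution estimate for the bounded solution, gives $|W_\eta(x)|\leq M\eu^{-\alpha|x|}$ at both ends simultaneously, with no nonlinear fixed-point step.

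\textbf{Your route.} You linearize about the periodic end state itself. You use only the qualitative fact $Z(x)\to 0$, which comes from $H^2$-membership on the half-line. You then run a local stable-manifold argument near each end, via a contraction in a weighted norm.

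\textbf{What each buys.} Your approach is more robust: it works for any solution asymptotic to a hyperbolic periodic state. It does not need the $\mathcal{O}(\eta)$ closeness to the NLS front or any bound on $w_\eta'$. The cost is that your threshold $x_0$, and hence your constants, depend on $\eta$ and on the particular solution. That suffices for the claimed limits, but the paper obtains a single quantitative bound on all of $\R$. You also have to treat the two ends separately and carry out the nonlinear step. When you write that step up, state explicitly why the given solution equals the weighted fixed point, for instance by uniqueness of fixed points in the $\delta$-ball of bounded functions.
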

\begin{proof}
We prove the claim only for $\smash{\underline{\psi}_{+,\eta}}$ as the arguments for $\smash{\underline{\psi}_{-,\eta}}$ are identical. It follows from Proposition~\ref{prop:existence_1-front} that we can write
$$
    \underline{\psi}_{+,\eta} = \chi_- (v_{-,\eta} + \sqrt{\zeta}) + \chi_+ (v_{+,\eta} - \sqrt{\zeta}) + \psi_{+,\sigma_0} + w_\eta
$$
where $\chi_\pm \colon \R \to [0,1]$ is a smooth partition of unity satisfying $\text{supp}(\chi_-) = (-\infty,1 ]$ and $\text{supp}(\chi_+) = [1,\infty)$, and where $w_\eta \in H^2(\R)$ is a localized error term. Proposition~\ref{prop:existence_1-front} 
and the continuous embeddings $H^1(\R) \hookrightarrow L^\infty(\R)$ and $H_\per^1(0,T) \hookrightarrow L^\infty(\R)$ 
then yield the bounds
$$
    \|v_{-,\eta} + \sqrt{\zeta}\|_{L^\infty},\|v_{+,\eta} - \sqrt{\zeta}\|_{L^\infty},\|w_\eta\|_{L^\infty},\|w_\eta'\|_{L^\infty} \leq C\eta 
$$
for some $\eta$-independent constant $C>0$. Since $ \psi_{+,\sigma_0}(x) \to \pm \sqrt{\zeta}$ and $ \psi_{+,\sigma_0}'(x) \to 0$ as $x \to \pm \infty$ at an exponential rate, it remains to show that $|w_\eta(x)| + |w'_\eta(x)| \to 0$ exponentially fast as $|x| \to \infty$. To this end, we set $W_\eta(x) := \smash{(w_\eta(x), w_\eta'(x))^\top} \in \R^2$ and observe that $W_\eta$ solves a system of ordinary differential equations
\begin{align}\label{eq:for_W_proof_exp_convergence}
    W_\eta' = \left(A^{(0)} + A_\eta^{(1)}(x) + A_\eta^{(2)}(x)\right)W_\eta + R_\eta(x), \qquad  A^{(0)} =
    \begin{pmatrix}
        0 & 1 \\
        2\zeta & 0 
    \end{pmatrix},
\end{align}
where the coefficient functions $A_{\eta}^{(1)}, A_\eta^{(2)}, R_\eta \in C(\R)$ enjoy the bounds
\begin{align}
\label{eq:A_bounds}
    \|A_\eta^{(1)}(x)\| \leq C \eta, \qquad \|R_\eta(x)\|+\|A_\eta^{(2)}(x)\| \leq C \eu^{-\kappa |x|}, \qquad x \in \R
\end{align}
for some $x$- and $\eta$-independent constants $C,\kappa>0$. 

Since $\zeta>0$, the matrix $A^{(0)}$ is hyperbolic. Hence, the system $W' = \smash{A^{(0)}}W$ admits an exponential dichotomy~\cite{Coppel1978} on $\R$. Roughness of exponential dichotomies against small perturbations~\cite[Proposition~4.1]{Coppel1978} yields that, provided $\eta>0$ is sufficiently small, the system $W' = (A^{(0)} + A^{(1)}_\eta(x)) W$ also admits an exponential dichotomy on $\R$. Denoting by $\Phi_\eta(x,y)$ the evolution of the system $W' = (A^{(0)} + A_\eta^{(1)}(x)) W$, this entails that there exist $\eta$-independent constants $K',\mu'>0$ and projections $P_\eta(x) \in \C^{2 \times 2}$ for $x \in \R$ such that
\begin{align} \label{eq:exp_di}
\begin{split}
    \|\Phi_\eta(x,y) P_\eta(y)\| \leq K' \eu^{-\mu'(x-y)} ,\qquad \text{for }x\geq y,\\
    \|\Phi_\eta(x,y)(I- P_\eta(y))\| \leq K' \eu^{-\mu'(y-x)} ,\qquad \text{for }y\geq x.
    \end{split}
\end{align}
Since $W_\eta$ is a bounded solution of~\eqref{eq:for_W_proof_exp_convergence} and the estimates~\eqref{eq:A_bounds} and~\eqref{eq:exp_di} hold, it must obey the variation-of-constants formula
\begin{align*}
    W_\eta(x) &= \int_{-\infty}^x \Phi_\eta(x,y)P_\eta(y) \Big( A_\eta^{(2)}(y) W_\eta(y) + R_\eta(y) \Big) \de y \\
    & \qquad -\int_x^\infty \Phi_\eta(x,y)(I-P_\eta(y)) \Big( A_\eta^{(2)}(y) W_\eta(y) + R_\eta(y) \Big) \de y, \quad \text{for all }x \in \R.
\end{align*}
Hence, applying the estimates~\eqref{eq:A_bounds} and~\eqref{eq:exp_di} to the latter and taking $\alpha \in (0, \min\{\mu',\kappa\})$, we obtain constants $K,M>0$ such that
$$
    |W_\eta (x)| \eu^{\alpha |x|} \leq K \eu^{\alpha |x|} \int_\R \eu^{-\mu' |x-y|} \eu^{-\kappa|y|} \de y \leq M,
$$
for all $x \in \R$, which completes the proof.
\end{proof}

\paragraph*{Dark primary pulses.} Nondegeneracy of the bifurcating fronts and backs in Proposition~\ref{prop:existence_1-front} as solutions to~\eqref{eq:LLE_defocusing_stationary_real} allows us to apply~\cite[Theorem 3.1]{Bengel2026Multiple} to concatenate a well-separated front and back solution with matching periodic end states to obtain a dark single-pulse solution. This leads to the following result.

\begin{Proposition}[Dark primary pulses] \label{prop:existence_2-front}
Let $\eta_1 > 0$ be as in Lemma~\ref{lem:exp_convergence_front}. Take $\eta \in (0,\eta_1)$ and let $\smash{\underline{\psi}_{\pm , \eta}} \in L^\infty(\R)$ be a nondegenerate front and back solution to~\eqref{eq:LLE_defocusing_stationary_real}, obtained in Proposition~\ref{prop:existence_1-front}. Then, there exist $C,\mu>0$, $N \in \N$, and a sequence $\{a_n\}_{n\geq N} \subset H^2(\R)$ such that $\unu_{\eta,n} =w_{\eta,n}+a_n \in H^2(\R) \oplus H^2_\per(0,T)$ is a nondegenerate solution to~\eqref{eq:LLE_defocusing_stationary_real} for all $n \in \N$ with $n \geq N$, where
$$
    w_{\eta,n}(x) =
    \begin{cases}
        \underline{\psi}_{+,\eta}(x), & x \in \big(-\infty,\frac{1}{2}nT\big], \\
        \underline{\psi}_{-,\eta}(x-nT), & x \in \big(\frac{1}{2}nT,\infty\big).
    \end{cases}
$$
and
\begin{align}\label{eq:bound_error_2-front}
    \|a_n\|_{H^2} \leq C \eu^{-\mu n}.
\end{align}
\end{Proposition}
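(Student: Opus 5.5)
The plan is to obtain Proposition~\ref{prop:existence_2-front} from the concatenation result~\cite[Theorem~3.1]{Bengel2026Multiple}, applied to the scalar, real-valued, spatially $T$-periodic second-order equation~\eqref{eq:LLE_defocusing_stationary_real}. I first verify its hypotheses.

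\emph{Hypothesis on the end states.} The front $\underline{\psi}_{+,\eta}$ connects $v_{-,\eta}$ at $-\infty$ to $v_{+,\eta}$ at $+\infty$. The back $\underline{\psi}_{-,\eta}$ connects $v_{+,\eta}$ to $v_{-,\eta}$. So the right end state of the front matches the left end state of the back, and the gluing of $w_{\eta,n}$ at $x=\tfrac12 nT$ happens near the common plateau state $v_{+,\eta}$. Shifting the back only by integer multiples $nT$ respects the $T$-periodicity of $F$.

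\emph{Hypothesis on the asymptotic operators.} Both constituents must be nondegenerate, which is given by Proposition~\ref{prop:existence_1-front}. The end states must also be hyperbolic, meaning that $L_+(v_{\pm,\eta})$ is invertible on $L^2(\R)$. I will argue this perturbatively. Since $\|v_{\pm,\eta}\mp\sqrt\zeta\|_{L^\infty}\le C\eta$, we have $L_+(v_{\pm,\eta})=\partial_x^2-2\zeta+\mathcal O(\eta)$. The operator $\partial_x^2-2\zeta$ is self-adjoint with spectrum contained in $(-\infty,-2\zeta]$, so a Neumann series gives invertibility for $\eta$ small, uniformly in $\eta\in(0,\eta_1)$. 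Equivalently, this is the exponential dichotomy of $W'=(A^{(0)}+A^{(1)}_\eta)W$ already used in Lemma~\ref{lem:exp_convergence_front}. Finally, Lemma~\ref{lem:exp_convergence_front} supplies the exponential convergence of the front and back to their periodic end states, at a rate $\mu>0$.

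\emph{Fixed-point argument.} With these inputs the abstract theorem produces $N$ and corrections $a_n$. I will sketch its mechanism in case it has to be reproduced. Insert $u=w_{\eta,n}+a$ into~\eqref{eq:LLE_defocusing_stationary_real}. The residual of $w_{\eta,n}$ is supported near $\tfrac12 nT$, where the jump mismatch is $\mathcal O(\eu^{-\mu nT/2})$. It can be estimated in $L^2$ by $C\eu^{-\mu' n}$ after replacing the sharp cut-off by a smooth one; the smoothing changes $w_{\eta,n}$ only by an exponentially small $H^2$ function, which is absorbed into $a_n$.

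The key step is a uniform bound $\|L_+(w_{\eta,n})^{-1}\|_{L^2\to H^2}\le C$ for $n\ge N$. I will obtain it by the standard partition-of-unity argument. Build an approximate inverse from the invertible $L_+(\underline{\psi}_{+,\eta})$ and $L_+(\underline{\psi}_{-,\eta}(\cdot-nT))$, localized with cut-offs whose derivatives are $\mathcal O(1/n)$. Because both constituents tend to the same hyperbolic plateau state, the commutator and overlap errors are small. Hence the approximate inverse differs from a true inverse by a Neumann-series correction.

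A contraction-mapping argument in a ball of radius $C\eu^{-\mu n}$ then yields $a_n$ satisfying~\eqref{eq:bound_error_2-front}. Nondegeneracy of $\unu_{\eta,n}$ follows because $L_+(\unu_{\eta,n})$ is an $\mathcal O(\eu^{-\mu n})$ perturbation of $L_+(w_{\eta,n})$. The regularity $\unu_{\eta,n}\in H^2(\R)\oplus H^2_\per(0,T)$, with periodic part $v_{-,\eta}$, follows from Proposition~\ref{prop:existence_1-front}.

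The step I expect to be the main obstacle is the uniform invertibility of $L_+(w_{\eta,n})$. It rests on the hyperbolicity of the plateau $v_{+,\eta}$ as a solution of the real scalar problem. This holds here because restricting to real solutions removes the neutral $L_-$ direction, which is exactly why the construction is carried out for $\eps=0$ with real-valued $u$.
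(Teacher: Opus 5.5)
Your proposal is correct and follows the same route as the paper, whose proof likewise obtains existence from \cite[Theorem~3.1]{Bengel2026Multiple}, derives~\eqref{eq:bound_error_2-front} from Lemma~\ref{lem:exp_convergence_front} together with the estimate in that theorem, and gets nondegeneracy from \cite[Lemma~3.3]{Bengel2026Multiple}; your residual, approximate-inverse and contraction sketch is a faithful unpacking of those citations. The one slip is your claim that smoothing the cut-off changes $w_{\eta,n}$ by an $H^2$ function: the sharply glued $w_{\eta,n}$ generically has an exponentially small jump at $\tfrac12 nT$, so $\unu_{\eta,n}-w_{\eta,n}$ is exponentially small only in $L^\infty$ (or piecewise $H^2$) and cannot lie in $H^2(\R)$ — an imprecision already present in the statement itself and harmless for its later use in Lemma~\ref{lem:nondeg_L-}.
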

\begin{proof}
Existence follows from~\cite[Theorem 3.1]{Bengel2026Multiple}. Moreover, the bound~\eqref{eq:bound_error_2-front} is a direct consequence of Lemma~\ref{lem:exp_convergence_front} and the estimate on $a_n$ in~\cite[Theorem 3.1]{Bengel2026Multiple}. Finally, the nondegeneracy of $\unu_{\eta,n}$ as a solution to~\eqref{eq:LLE_defocusing_stationary_real} follows from~\cite[Lemma 3.3]{Bengel2026Multiple}.
\end{proof}

The pulse solution $\unu_{\eta,n}$ from Proposition~\ref{prop:existence_2-front} connects to the end state $v_{-,\eta}$ at $x=\pm\infty$ through the plateau state $v_{+,\eta}$. By the same arguments, one can likewise construct a pulse that connects to $v_{+,\eta}$ at $x=\pm\infty$ through the plateau state $v_{-,\eta}$.

\paragraph*{Nondegeneracy of dark primary pulse to complex-valued formulation.} Let $\unu_{\eta,n}$ be a dark primary pulse solution to~\eqref{eq:LLE_main_tw_mod}, obtained in Proposition~\ref{prop:existence_2-front}, and set $\smash{\ubu_{\eta,n}} = \smash{(\unu_{\eta,n},0)^\top}$. We prove the nondegeneracy of $\ubu_{\eta,n}$ as a solution to system~\eqref{eq:LLE_main_tw_mod_sys} under a nonresonance condition. Proposition~\ref{prop:existence_2-front} yields that $0 \not \in \sigma(L_+(\unu_{\eta,n}))$ and thus we find that $\El_0(\ubu_{\eta,n})$ is invertible if and only if this holds for $L_-(\unu_{\eta,n})$.

To establish invertibility of $L_-(\unu_{\eta,n})$, we adopt a spatial dynamics approach and write the problem $L_-(\unu_{\eta,n})u = 0$ as the first order system
\begin{align}\label{eq:first_order_L-}
    U' = \mathcal{A}_-(\unu_{\eta,n}(x)) U, \qquad
    \mathcal{A}_-(u):=
    \begin{pmatrix}
        0 & 1 \\
        u^2-\zeta & 0 
    \end{pmatrix}
\end{align}
in $U = (u,u')^\top$. Before we start with the analysis of the full equation~\eqref{eq:first_order_L-}, we study the dynamics at the periodic end states $v_{\pm,\eta}$. In particular, we are interested in the spatial Floquet exponents, cf.~\cite[Section~2.1]{KapitulaPromislow2013}, of the $T$-periodic system
\begin{align}\label{eq:first_order_periodic}
    U' = \mathcal{A}_-(v_{\pm,\eta}(x))U,
\end{align}
which we trace for small values of $\eta$. Note that, since $\mathcal{A}_-(v_{\pm,\eta}(x))$ is real-valued with zero trace for all $x \in \R$, we find by Abel's identity, cf.~\cite[Lemma 2.1.31]{KapitulaPromislow2013}, that $\nu$ is a Floquet exponent if and only if $\overline{\nu}$ is, and that the sum of the Floquet exponents $\nu_{1,2}$ satisfies $\nu_1 + \nu_2 = 0 \text{ mod } 2\pi \iu /T$. Therefore, the Floquet exponents of~\eqref{eq:first_order_periodic} are either real or purely imaginary. 

For $\eta=0$, we find 
$$
    \mathcal{A}_-(v_{\pm,0}(x)) =
    \begin{pmatrix}
        0 & 1 \\ 0 & 0
    \end{pmatrix}
$$
by Proposition~\ref{prop:existence_1-front}. Thus, system~\eqref{eq:first_order_periodic} possesses a multiplicity-two Floquet exponent $\nu = 0$ at $\eta = 0$. We prove that this exponent splits as $\eta > 0$ and determine how this splitting depends on the forcing function $F$. For the calculations, it is helpful to characterize imaginary Floquet exponents through eigenvalues of the Bloch operators
\begin{align}\label{def:L-_Bloch}
    L_{-,\xi}(\unu) \colon H_\per^2(0,T) \to L_\per^2(0,T), \qquad L_{-,\xi}(\unu) = \left(\partial_x +\iu\frac{\xi}{T}\right)^2 + \zeta - \unu^2 
\end{align}
with $\xi \in [-\pi,\pi)$. Floquet-Bloch theory~\cite[Section~3.3]{KapitulaPromislow2013} yields that $\nu=\iu \xi$ is a spatial Floquet exponent of~\eqref{eq:first_order_periodic} if and only if the problem
\begin{align}\label{eq:L-_endstates_Bloch}
    L_{-,\xi}(v_{\pm,\eta}) u = 0
\end{align}
has a nontrivial solution $u\in H_\per^2(0,T)$.

\begin{Lemma}[Floquet analysis]\label{lem:splitting_Floquet_exponents}
Assume that $\int_0^T F(x) \de x \neq 0$. Then, there exist $\eta_2 \in (0,\eta_0)$ and $C>0$ such that for all $\eta \in (0,\eta_2)$ equation~\eqref{eq:L-_endstates_Bloch} has a nontrivial solution $u \in H_\per^2(0,T)$ for some $\xi \in \R$ if and only if 
$$v_{\pm,0} \int_0^T F(x) \de x<0.$$ 
In this case, there are precisely two Bloch frequencies $\pm\xi_\eta \in \R$ for which~\eqref{eq:L-_endstates_Bloch} admits a nonzero solution. They satisfy the approximation
\begin{align}\label{eq:expansion_floquet_exponent}
    \left|\xi_\eta^2 + \eta T \frac{v_{\pm,0}}{\zeta} \int_0^T F(x) \de x\right| \leq C \eta^{2}.
\end{align}
In particular, the spatial Floquet exponents of~\eqref{eq:first_order_periodic} are given by $\pm \iu\xi_\eta$. 
\end{Lemma}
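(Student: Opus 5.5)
We treat this as a perturbation problem for the Bloch operators $L_{-,\xi}(v_{\pm,\eta})$ near the double Floquet exponent $\nu=0$ at $\eta=0$. The first step is to localize the relevant Bloch frequencies. At $\eta=0$ we have $v_{\pm,0}=\pm\sqrt{\zeta}$, so $L_{-,\xi}(v_{\pm,0}) = (\partial_x+\iu\xi/T)^2$, with eigenvalues $-(2\pi k+\xi)^2/T^2$ and eigenfunctions $\eu^{2\pi\iu kx/T}$. Since $\|v_{\pm,\eta}\mp\sqrt\zeta\|_{H^2_\per}\le C\eta$, the perturbation $\zeta-v_{\pm,\eta}^2$ is $\mathcal{O}(\eta)$ in $L^\infty$. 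Hence a kernel can only occur for $\xi$ in a small neighborhood of $0$ within $[-\pi,\pi)$, with $|\xi|\lesssim\eta^{1/2}$. For such $\xi$, the kernel must come from the simple eigenvalue near $0$ with eigenfunction close to the constant $1$.

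Next we perform a Lyapunov--Schmidt reduction onto constants. Let $P$ denote the projection onto the mean and $Q=I-P$. On $\mathrm{ran}\, Q$ the operator $QL_{-,\xi}Q$ is invertible uniformly, with inverse of size $\mathcal{O}(1)$. Solving the $Q$-equation for $u=1+w$ gives $w=\mathcal{O}(\eta+|\xi|)$. The bifurcation equation then reads
\begin{align*}
-\frac{\xi^2}{T^2}\,T + \int_0^T\big(\zeta - v_{\pm,\eta}^2\big)\,\de x + \mathcal{O}\big(\eta^2+\eta|\xi|+|\xi|^3\big)=0.
\end{align*}
The $\xi$-odd term $2\iu\xi/T\,\partial_x$ annihilates constants. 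Its second-order contribution through $w$ is $\mathcal{O}(\xi^2\eta)$, since $\partial_x w$ only feels the $x$-dependent part of the potential. A more careful bookkeeping should therefore give an error $\mathcal{O}(\eta^2+\eta\xi^2)$. Using realness and evenness in $\xi$, which follow from the symmetry $\xi\mapsto-\xi$ by complex conjugation, the reduced function is real and even in $\xi$, so it is a smooth function of $\xi^2$.

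The key computation is $\int_0^T(\zeta-v_{\pm,\eta}^2)\,\de x$ to first order in $\eta$. Write $v_{\pm,\eta}=\pm\sqrt\zeta+\eta r+\mathcal{O}(\eta^2)$. Inserting this into \eqref{eq:LLE_defocusing_stationary_real} gives $r''-2\zeta r+F=0$, so $\int_0^T r\,\de x = \frac{1}{2\zeta}\int_0^T F\,\de x$ by integrating over a period. Therefore
\begin{align*}
\int_0^T\big(\zeta-v_{\pm,\eta}^2\big)\,\de x = \mp 2\sqrt\zeta\,\eta\int_0^T r\,\de x+\mathcal{O}(\eta^2) = -\eta\frac{v_{\pm,0}}{\zeta}\int_0^T F\,\de x+\mathcal{O}(\eta^2).
\end{align*}
The bifurcation equation thus becomes
\begin{align*}
\xi^2 = -\eta T\frac{v_{\pm,0}}{\zeta}\int_0^TF\,\de x+\mathcal{O}(\eta^2+\eta\xi^2).
\end{align*}
By the implicit function theorem in the variable $s=\xi^2$, this has a unique small solution $s_\eta$. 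A real $\xi$ exists iff $s_\eta\ge0$, which for small $\eta$ holds iff $v_{\pm,0}\int_0^T F\,\de x<0$, and the solutions are then $\pm\xi_\eta$ with \eqref{eq:expansion_floquet_exponent}. Here we use that the mean is nonzero, so $s_\eta\neq0$.

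Finally, the Floquet statement follows from the Floquet--Bloch correspondence, together with the observation that the exponents are either real or purely imaginary and number two modulo $2\pi\iu/T$. The main obstacle is keeping the remainder of the reduction at order $\eta^2+\eta\xi^2$ rather than $\eta|\xi|$. We would handle this via the evenness/realness symmetry argument, or by directly expanding $w$ to second order.
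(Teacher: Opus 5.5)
Your argument is correct, but it is organized differently from the paper's. The paper appeals to analytic perturbation theory to assert that the split Floquet exponents are analytic in $\eta^{1/2}$. It then substitutes the formal expansions $\xi=\xi_1\eta^{1/2}+\mathcal{O}(\eta)$, $u=1+\eta u_1+\mathcal{O}(\eta^2)$, $v_{\pm,\eta}=\pm\sqrt{\zeta}-\eta(\partial_x^2-2\zeta)^{-1}F+\mathcal{O}(\eta^2)$ into the Bloch problem, and reads off $\xi_1^2$ from the $\mathcal{O}(\eta)$ solvability condition obtained by testing with the constant $1$. You instead localize $|\xi|\lesssim\eta^{1/2}$ by self-adjointness, carry out an explicit Lyapunov--Schmidt reduction of $L_{-,\xi}(v_{\pm,\eta})$ onto constants, and solve the reduced scalar equation by the implicit function theorem in $s=\xi^2$.

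The decisive computation is the same in both: your $\int_0^T r\,\de x=\frac{1}{2\zeta}\int_0^T F\,\de x$ is the paper's pairing $\langle F,(\partial_x^2-2\zeta)^{-1}1\rangle$ up to sign. Your route, however, actually proves what the formal expansion leaves implicit. It gives the ``only if'' direction and the fact that there are exactly two frequencies. It also gives the $C\eta^2$ bound, which follows from Taylor's theorem applied to the smooth solution $s_\eta$. For $s<0$ the reduction is still defined by analyticity in $\xi$, and it corresponds to real Floquet exponents. The paper's version is shorter and skips the localization step, at the price of relying on the Puiseux structure without controlling remainders.

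Two small refinements. First, complex conjugation alone gives only $g(-\xi)=\overline{g(\xi)}$ for the reduced function $g$. Its realness comes from self-adjointness of $L_{-,\xi}(v_{\pm,\eta})$ together with orthogonality of the projection onto constants, and the two facts combine to give evenness. Second, the ``main obstacle'' you flag is not one. Since $QL_{-,\xi}(v_{\pm,\eta})1=Q(\zeta-v_{\pm,\eta}^2)$ is independent of $\xi$, you get $w=\mathcal{O}(\eta)$ uniformly in small $\xi$, not merely $\mathcal{O}(\eta+|\xi|)$. Moreover, $w$ is periodic with zero mean, so the terms $w''$, $2\iu\xi T^{-1}w'$ and $\xi^2T^{-2}w$ integrate to zero. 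The reduced equation is therefore exactly
\[
-\frac{\xi^2}{T}+\int_0^T\big(\zeta-v_{\pm,\eta}^2\big)\,\de x+\int_0^T\big(\zeta-v_{\pm,\eta}^2\big)\,w\,\de x=0,
\]
with remainder $\mathcal{O}(\eta^2)$ outright. No $\eta|\xi|$ or $|\xi|^3$ terms arise, and evenness is needed only to run the implicit function theorem in $s$ and obtain uniqueness.
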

\begin{proof}
By standard analytic perturbation theory~\cite{Kato1995} the Floquet exponents of~\eqref{eq:first_order_periodic} are analytic in $\eta^{1/2}$ as they appear from the splitting of a multiplicity-two root. This yields the expansions 
\begin{align}\label{eq:Floquet_expansions}
\begin{split}
    \xi &= \xi_1 \eta^{1/2} + \mathcal{O}(\eta), \qquad
    u = 1 + \eta u_1 + \mathcal{O}(\eta^2),\\
    v_{\pm,\eta} &= \pm \sqrt{\zeta} - \eta (\partial_x^2-2\zeta)^{-1} F + \mathcal{O}(\eta^2).
\end{split}
\end{align}
Inserting~\eqref{eq:Floquet_expansions} into~\eqref{eq:L-_endstates_Bloch}, we arrive at order $\mathcal{O}(\eta)$ at the equation
\begin{align}\label{eq:Floquet_O(1)}
    \partial_x^2 u_1 \pm 2 \sqrt{\zeta}(\partial_x^2-2\zeta)^{-1} F  = \xi_1^2 T^{-2} .
\end{align}
Testing~\eqref{eq:Floquet_O(1)} with $u_0=1$ and integrating by parts, we deduce
\begin{align*}
    \xi_1^2 T^{-1} = \pm 2 \sqrt{\zeta} \langle  F, (\partial_x^2-2\zeta)^{-1}1\rangle_{L_\per^2(0,T)}
    = -\frac{v_{\pm,0}}{\zeta} \int_0^T F(x) \de x.
\end{align*}
We find a real-valued solution $\xi_1$ if and only if $\smash{v_{\pm,0}\int_0^T F(x) \de x <0}$. In this case, we obtain the approximation~\eqref{eq:expansion_floquet_exponent} and the Floquet exponents of~\eqref{eq:first_order_periodic} are purely imaginary for $\eta > 0$ sufficiently small. If $\smash{v_{\pm,0} \int_0^T F(x) \de x >0}$, equation~\eqref{eq:L-_endstates_Bloch} has only the trivial solution. Thus,~\eqref{eq:first_order_periodic} possesses a pair of real Floquet exponents for $\eta > 0$ sufficiently small.
\end{proof}

We are now ready to establish the nondegeneracy of $\ubu_{\eta,n_j}$ for a strictly increasing sequence $\{n_j\}_{j\in \N} \subset \N$ diverging to infinity under a nonresonance condition on $\eta$.

\begin{Lemma}[Nondegeneracy of dark primary pulses]\label{lem:nondeg_L-}
Let $\eta_{1,2} > 0$ be as in Proposition~\ref{prop:existence_2-front} and Lemma~\ref{lem:splitting_Floquet_exponents}, respectively. Let $\smash{\int_0^T F(x) \de x<0}$. Assume that $\eta \in (0,\min\{\eta_1,\eta_2\})$ satisfies the nonresonance condition 
\begin{align}\label{eq:non_resonance}
    \frac{\xi_\eta T}{\pi} \not \in \Q,
\end{align}
where $\xi_\eta>0$ is the positive Bloch frequency from Lemma~\ref{lem:splitting_Floquet_exponents} for $v_{+,\eta}$.
Then, there exists a sequence $\{n_j\}_{j\in \N}\subset\N$ with $n_j \to \infty$ such that the solution $\ubu_{\eta,n_j} = \smash{(\unu_{\eta,n_j},0)^\top}$ to~\eqref{eq:LLE_main_tw_mod_sys}, obtained in Proposition~\ref{prop:existence_2-front}, is nondegenerate for all $j\in \N$, i.e., the operator $\El_0({\ubu_{\eta,n_j}})$ is invertible. 
\end{Lemma}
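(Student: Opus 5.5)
The plan is to prove that $L_-(\unu_{\eta,n})$ has trivial kernel for suitable $n$. Since $0 \notin \sigma(L_+(\unu_{\eta,n}))$ by Proposition~\ref{prop:existence_2-front}, this suffices. First I establish the Fredholm property. The asymptotic state of $\unu_{\eta,n}$ at $x=\pm\infty$ is $v_{-,\eta}$, and $v_{-,0}\int_0^T F\,\de x = -\sqrt{\zeta}\int_0^T F\,\de x>0$. Lemma~\ref{lem:splitting_Floquet_exponents} therefore shows that~\eqref{eq:first_order_periodic} for $v_{-,\eta}$ has no imaginary Floquet exponents; they form a pair $\pm\nu_\eta$ of nonzero real exponents. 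Hence~\eqref{eq:first_order_L-} admits exponential dichotomies on $(-\infty,-L]$ and $[nT+L,\infty)$, where $L>0$ is fixed, using Floquet theory and roughness~\cite{Coppel1978} together with Lemma~\ref{lem:exp_convergence_front}. Consequently $L_-(\unu_{\eta,n})$ is Fredholm of index zero, and invertibility reduces to injectivity. A kernel element corresponds to a solution $U$ of~\eqref{eq:first_order_L-} lying in the one-dimensional unstable subspace at $x=-L$ and in the one-dimensional stable subspace at $x=nT+L$.

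Second, I track these one-dimensional subspaces into the plateau. On $[-L,L]$ the system is a fixed (in $n$) regular problem. Let $U_+(x)$ be the solution decaying at $-\infty$ along the front $\underline{\psi}_{+,\eta}$, normalized at $x=L$. Similarly, let $U_-(x)$ be the solution decaying at $+\infty$ along the back $\underline{\psi}_{-,\eta}(\cdot-nT)$; up to errors of order $\eu^{-\mu n}$ it is $n$-independent after the shift by $nT$. On the plateau $[L,nT-L]$, the coefficient satisfies $\unu_{\eta,n}^2 = v_{+,\eta}^2 + r_n$, with $r_n$ exponentially small away from the interfaces, by Lemma~\ref{lem:exp_convergence_front} and~\eqref{eq:bound_error_2-front}. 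By Lemma~\ref{lem:splitting_Floquet_exponents}, the periodic system for $v_{+,\eta}$ has Floquet exponents $\pm\iu\xi_\eta$. Its real fundamental solutions are thus of the form $\Re$ and $\Im$ of $\eu^{\iu\xi_\eta x/T}p(x)$ with $p$ being $T$-periodic, and they are uniformly bounded. Using variation of constants against this bounded Floquet evolution, the perturbation $r_n$ contributes only $\mathcal{O}(\eu^{-\mu' L})$ corrections. Hence on the plateau
\begin{align*}
U_+(x) \approx A_+\,\Re\big(\eu^{\iu(\xi_\eta x/T+\varphi_+)}p(x)\big), \qquad U_-(x) \approx A_-\,\Re\big(\eu^{\iu(\xi_\eta (x-nT)/T+\varphi_-)}p(x)\big),
\end{align*}
where the phases $\varphi_\pm$ and amplitudes $A_\pm\neq0$ depend on $\eta$ but not on $n$.

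Third, I match. A nontrivial kernel exists if and only if $U_+$ and $U_-$ are linearly dependent, i.e.\ their Wronskian, evaluated say at $x=\tfrac12 nT$, vanishes. By the above representation, this Wronskian equals a nonzero constant times $\sin(\xi_\eta n + \varphi_+-\varphi_-)$ plus an error $\mathcal{O}(\eu^{-\mu n})$, with $\eu^{-\mu n}$ controlling both interface errors once $L$ is taken proportional to $n$. Under the nonresonance condition~\eqref{eq:non_resonance}, the sequence $n\xi_\eta$ is equidistributed modulo $\pi$ by Weyl's theorem. Hence there are infinitely many $n_j\to\infty$ with $|\sin(\xi_\eta n_j+\varphi_+-\varphi_-)|\geq\tfrac12$, and for large $j$ the Wronskian is nonzero. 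This yields injectivity, hence invertibility of $L_-(\unu_{\eta,n_j})$ and therefore of $\El_0(\ubu_{\eta,n_j})$.

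The main obstacle is the second step: making the plateau reduction rigorous when the plateau dynamics is only neutrally stable (critical absolute spectrum), so no dichotomy is available there. I would handle it by conjugating to Floquet normal form via $U=Q(x)V$ with $Q$ $T$-periodic. This gives $V' = (\mathrm{diag}(\iu\xi_\eta,-\iu\xi_\eta)/T + Q^{-1}\tilde r_n Q)V$, where the perturbation is integrable with $L^1$-norm bounded uniformly in $n$ and exponentially small away from the interfaces. A Gronwall/Levinson-type argument then yields the asymptotic oscillatory form with uniform error bounds. The interface pieces near $x=0$ and $x=nT$ are handled by continuity of the solution of~\eqref{eq:first_order_L-} on compact intervals.
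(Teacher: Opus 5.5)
Your proof is correct. It shares the paper's skeleton: reduction to $L_-$, dichotomies and Fredholm index zero from the hyperbolic end state $v_{-,\eta}$, bounded Floquet evolution on the neutral plateau $v_{+,\eta}$, variation of constants, and an angle growing linearly in $n$. The matching step, however, is organized differently.

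\textbf{The paper's matching.} It fixes a truncation $m_0$ and reduces to a boundary-value problem on $[m_0T,(n-m_0)T]$, with boundary lines obtained by roughness. The residual error $C_6\eu^{-\mu m_0T}$ does not vanish as $n\to\infty$. So it uses the $m$-uniform area bound $A$ to pick one good angle $\theta_0$, and then Kronecker's theorem, i.e.\ \eqref{eq:non_resonance}, to make $\eu^{B\tilde n_jT}$ approximate $R_{\theta_0}$.

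\textbf{Your matching.} You evaluate the Wronskian of the two decaying solutions at $x=\tfrac12 nT$, using $n$-independent asymptotic phases and amplitudes of the individual front and back. This costs a Levinson-type asymptotic integration on half-lines. It also requires $A_\pm\neq0$, which you assert without proof; it does hold, because the plateau evolution is bounded in both directions, so no nonzero solution can tend to zero. In return you get an explicit leading-order Evans function $c\sin(\xi_\eta n+\varphi_+-\varphi_-)+o(1)$ with $c\neq0$ and errors that vanish as $n\to\infty$.

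\textbf{Bookkeeping.} Define $\varphi_\pm,A_\pm$ by the limits $x\to\pm\infty$ along the front and back, not at $x=\pm L$. Then keep $L$ fixed and compare with the actual decaying solutions along $\unu_{\eta,n}$ on each half-line. Use the uniformly bounded front/back evolution on the half-plateau together with $\int_0^{nT/2}|a_n|\,\de x\lesssim\sqrt n\,\eu^{-\mu n}$. As written, ``$L$ fixed'' and ``$L$ proportional to $n$'' pull against each other, and the $n$-independence of the phases is only honest with the asymptotic definition.

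\textbf{Normalization.} With $\eu^{\iu\xi_\eta x/T}$ your accumulated angle is $n\xi_\eta$. The condition \eqref{eq:non_resonance} instead matches the paper's rotation by $\xi_\eta(x-y)$, i.e.\ the angle $n\xi_\eta T$. So your appeal to equidistribution under \eqref{eq:non_resonance} does not literally apply. This is immaterial for your route, and your formula actually gives more. The sine's argument advances by a fixed angle outside $\pi\mathbb{Z}$ when $n\mapsto n+1$, since the plateau Floquet multipliers are distinct. Hence $|\sin|$ cannot be small at two consecutive integers, and one of any two consecutive large $n$ works. Your approach therefore needs neither equidistribution nor the nonresonance hypothesis, and it characterizes the exceptional $n$ explicitly. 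The paper's approach, by contrast, avoids asymptotic integration on half-lines and works only with dichotomies and a finite interval.
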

\begin{proof}
Following the strategy outlined in~\S\ref{sec:method_of_proof}, we first establish exponential dichotomies~\cite{Coppel1978} for system~\eqref{eq:first_order_L-} allowing to reduce~\eqref{eq:first_order_L-} to a boundary-value problem along the plateau state $v_{+,\eta}$ of $\unu_{\eta,n}$.

Since $\smash{v_{-,0} \int_0^TF(x) \de x > 0}$, we deduce from Lemma~\ref{lem:splitting_Floquet_exponents} that system
\begin{align}\label{eq:periodic_system-}
    U'=\mathcal{A}_-(v_{-,\eta}(x)) U
\end{align}
has two real Floquet exponents $\pm\nu_\eta \in\R$ with $\nu_\eta>0$. This implies that~\eqref{eq:periodic_system-} admits an exponential dichotomy on $\R$ with projections of rank one. Using that $\smash{\underline{\psi}_{+,\eta}(x) - v_{-,\eta}(x)} \to 0$ as $x\to-\infty$ by Proposition~\ref{prop:existence_1-front},~\cite[Lemma~3.4]{Palmer1984} yields that  system
\begin{align}
    U' = \mathcal{A}_-(\underline{\psi}_{+,\eta}(x)) U
\end{align}
admits for each $m\in \N$ an exponential dichotomy on $(-\infty,mT]$ with projection $P_{-,m}(x)\in \C^{2\times 2}$ of rank one. Using that $\smash{\underline{\psi}_{-,\eta}(x)-v_{-,\eta}(x)} \to 0$ as $x\to\infty$ we obtain likewise for each $m \in \N$ an exponential dichotomy of system
\begin{align}
    U' = \mathcal{A}_-(\underline{\psi}_{-,\eta}(x)) U
\end{align}
on the interval $[-mT,\infty)$ with projection $P_{+,m}(x) \in \C^{2\times 2}$ of rank one. Fix $m \in \N$. By Proposition~\ref{prop:existence_2-front} we have that
\begin{align*}
    \|\underline{\psi}_{+,\eta}-\unu_{\eta,n}\|_{L^\infty(-\infty,mT)} \to 0,\qquad
    \|\underline{\psi}_{-,\eta}(\cdot-nT)-\unu_{\eta,n}\|_{L^\infty((n-m)T,\infty)} \to 0, \qquad
    \text{as } n \to \infty.
\end{align*}
Thus, provided $n \in \N$ is sufficiently large, roughness of exponential dichotomies~\cite[Proposition~4.1]{Coppel1978} implies that system~\eqref{eq:first_order_L-} admits exponential dichotomies with $n$-independent constants on the intervals $(-\infty,mT]$ and $[(n-m)T,\infty)$ with projections $\smash{\tilde{P}_{-,m,n}(x)} \in \C^{2\times 2}$ and $\smash{\tilde{P}_{+,m,n}(x)} \in \C^{2\times 2}$, respectively, both of rank one and, moreover, we have
\begin{align}\label{eq:approximation_projections}
\|\tilde{P}_{-,m,n}-P_{-,m}\|_{L^\infty(-\infty,mT)} \to 0, \qquad
\|\tilde{P}_{+,m,n}-P_{+,m}(\cdot-nT)\|_{L^\infty((n-m)T,\infty)} \to 0
\end{align}
as $n \to \infty$.

We use the exponential dichotomies to reduce~\eqref{eq:first_order_L-} to a boundary-value problem on a finite interval. To this end, let $q_{-,m}\in \R^2\setminus\{0\}$ be in the range of $P_{-,m}(mT)$. Note, that we can choose $q_{-,m}$ real-valued since $\A_-(\unu_{\eta,n}(x))$ is real-valued and $\text{dim}(\text{ran}(P_{-,m}(mT))) = 1$. We set $p_{-,m}:=q_{-,m}/\|q_{-,m}\|$. Similarly, we choose $q_{+,m} \in \R^2\setminus\{0\}$ in the range of $P_{+,m}(-mT)$ and we set $p_{+,m}:=q_{+,m}/\|q_{+,m}\|$. Now we define $\tilde{p}_{-,m,n}:=\tilde{P}_{-,m,n}(mT)p_{-,m}$ and $\tilde{p}_{+,m,n}:=\tilde{P}_{+,m,n}((n-m)T)p_{+,m}$ and observe by~\eqref{eq:approximation_projections} that $\|\tilde{p}_{\pm,m,n}-p_{\pm,m}\|\to 0$ as $n \to \infty$. In particular, we find $\|\tilde{p}_{\pm,m,n}\|\to 1$ as $n \to \infty$. Thus, the problem~\eqref{eq:first_order_L-} has a nonzero $L^2$-localized solution for $n\in \N$ sufficiently large with $2m<n$ if and only if the boundary-value problem
\begin{align}\label{eq:nondegeneracy_bvp}
\begin{split}
    U' &= \mathcal{A}_-(\unu_{\eta,n}(x))U, \qquad x \in [mT,(n-m)T],\\
    U(mT) &\in \text{span}\{\tilde{p}_{-,m,n}\}, \qquad
    U((n-m)T) \in \text{span}\{\tilde{p}_{+,m,n}\},
\end{split}
\end{align}
has a nonzero solution.

We analyze the boundary-value problem~\eqref{eq:nondegeneracy_bvp}. To this end, we take $n\in \N$ with $2m<n$ and note that for all $x \in [mT,(n-m)T]$ we have the estimate
\begin{align*}
    &\|\mathcal{A}_-(\unu_{\eta,n}(x))-\mathcal{A}_-(v_{+,\eta}(x))\| \\
    &\qquad\qquad \leq \|\mathcal{A}_-(\unu_{\eta,n}(x))-\mathcal{A}_-(w_{\eta,n}(x))\| +\|\mathcal{A}_-(w_{\eta,n}(x))-\mathcal{A}_-(v_{+,\eta}(x))\| \\
    &\qquad\qquad \leq C_1 \left( \eu^{-\mu n} + \eu^{-\mu x} + \eu^{\mu (x-nT)} \right)
\end{align*}
where $C_1>0$ is an $m$- and $n$-independent constant and $\mu>0$ is the minimum of the two exponential decay rates from Lemma~\ref{lem:exp_convergence_front} and Proposition~\ref{prop:existence_2-front}. We use this estimate to relate the dynamics of $U' = \mathcal{A}_-(\unu_{\eta,n}(x))U$ to the dynamics of the $T$-periodic system 
\begin{align}\label{eq:first_order_v_+}
    U' = \mathcal{A}_-(v_{+,\eta}(x))U.   
\end{align}
Let $\mathcal{T}(x,y)$ denote the evolution of~\eqref{eq:first_order_v_+}. Using Floquet's  theorem~\cite[Theorem 2.14]{Meiss2007Differential}, Lemma~\ref{lem:splitting_Floquet_exponents}, and the fact that $\smash{v_{+,0} \int_0^T F(x) \de x = -v_{-,0} \int_0^T F(x) \de x < 0}$, we obtain a family of invertible matrices $Q(x)\in \R^{2\times 2}$, which are continuously differentiable and $2T$-periodic in $x$, such that
$\mathcal{T}(x,y) = Q(x) \eu^{B(x-y)}Q(y)^{-1}$ with
$$
    \eu^{B(x-y)} =
    \begin{pmatrix}
        \cos(\xi_\eta (x-y)) & -\sin(\xi_\eta (x-y)) \\ \sin(\xi_\eta (x-y)) & \cos(\xi_\eta (x-y))
    \end{pmatrix}, \qquad x,y \in \R.
$$
Let $U$ be the solution to the initial-value problem
\begin{align}\label{eq:initial_value_problem_plateau}
    U' &= \mathcal{A}_-(\unu_{\eta,n}(x))U, \qquad U(mT)=\tilde{p}_{-,m,n}.
\end{align}
Then, the boundary-value problem~\eqref{eq:nondegeneracy_bvp} has a nontrivial solution if and only if $\det(U((n-m)T)|\tilde{p}_{+,m,n}) = 0$. 

Let $R_{\eta,n}(x):=\mathcal{A}_-(\unu_{\eta,n}(x))-\mathcal{A}_-(v_{+,\eta}(x))$. We use the variation-of-constants formula to write the solution to~\eqref{eq:initial_value_problem_plateau} as
\begin{align}\label{eq:Duahmel}
    U(x) = \mathcal{T}(x,mT)\tilde{p}_{-,m,n} + \int_{mT}^x \mathcal{T}(x,y) R_{\eta,n}(y)U(y) \de y.
\end{align}
We bound~\eqref{eq:Duahmel} on the interval $[mT,(n-m)T]$ in order to relate it to the solution of the periodic system~\eqref{eq:first_order_v_+}. Clearly, there exists $C_2>0$ such that
$$
    \|\mathcal{T}(x,y)\|\leq C_2, \qquad \text{for all } x,y\in \R.
$$
Thus, using the bounds for $\mathcal{T}(x,y)$ and $R_{\eta,n}(y)$, we find
\begin{align*}
    \|U(x)\| &\leq C_2 \left(\|\tilde{p}_{-,m,n}\| + \int_{mT}^x \|R_{\eta,n}(y)\| \de y \max_{y\in [mT,x]}\|U(y)\| \right) \\
    &\leq C_3 \left(1 +
    \int_{mT}^{(n-m)T} \left(\eu^{-\mu n} + \eu^{-\mu y} + \eu^{\mu (y-nT)} \right) \de y  \max_{y\in [mT,(n-m)T]}\|U(y)\| \right) \\
    & \leq C_4 \left( 1 + (\eu^{-\mu n/2} + \eu^{-\mu mT})\max_{y\in [mT,(n-m)T]}\|U(y)\| \right)
\end{align*}
for all $x \in [mT,(n-m)T]$ and some $x$-, $m$-, and $n$-independent constants $C_{2,3,4}>0$. Taking the maximum over all $x \in [mT,(n-m)T]$ and taking $n,m \in \N$ so large that $C_4( \eu^{-\mu n/2} + \eu^{-\mu mT})<1/2$, we obtain
$$
    \max_{y\in [mT,(n-m)T]}\|U(y)\| \leq 2C_4.
$$
This allows us to estimate
\begin{align}\label{eq:approximation_on_periodic_evolution_1}
\begin{split}
    \|U(x) - \mathcal{T}(x,mT)\tilde{p}_{-,m,n}\|
    &\leq \int_{mT}^{(n-m)T} \|\mathcal{T}(x,y) R_{\eta,n}(y)U(y)\| \de y \\
    &\leq 2 C_2 C_4 \int_{mT}^{(n-m)T} \|R_{\eta,n}(y)\| \de y
    \leq C_5 (\eu^{-\mu n/2} + \eu^{-\mu mT})
\end{split}
\end{align}
for some $m$- and $n$-independent constant $C_5>0$ and all $x \in [mT,(n-m)T]$. Thus,~\eqref{eq:approximation_on_periodic_evolution_1} yields
\begin{align}\label{eq:approximation_on_periodic_evolution_2}
    \|U(x) - \mathcal{T}(x,mT)p_{-,m}\| \leq   C_5 (\eu^{-\mu n/2} + \eu^{-\mu mT}) + C_2\|\tilde{p}_{-,m,n} - p_{-,m}\|.
\end{align}
We set $Q_0:=Q(0)$ and assume that $n,m$ are even. Then, using that $Q$ is $2T$-periodic together with the formula for $\mathcal{T}$, we infer from~\eqref{eq:approximation_on_periodic_evolution_2} the bound
\begin{align}\label{eq:approximation_on_periodic_evolution_3}
    \|U((n-m)T) - Q_0^{-1} \eu^{B(n-2m)T}Q_0p_{-,m}\| \leq   C_5 (\eu^{-\mu n/2} + \eu^{-\mu mT}) + C_2\|\tilde{p}_{-,m,n} - p_{-,m}\|.
\end{align}
For $\theta\in [0,1]$ we set
$$
    R_\theta:=
    \begin{pmatrix}
        \cos(2\pi\theta) & -\sin(2\pi\theta) \\ \sin(2\pi\theta) & \cos(2\pi\theta)
    \end{pmatrix}.
$$
Then, there exists an $m$- and $n$-independent constant $C_6>0$ with
\begin{align}\label{eq:estimate_determinant}
\begin{split}
    &|\det(U((n-m)T)|\tilde{p}_{+,m,n})-\det(Q_0^{-1} R_\theta Q_0p_{-,m}|p_{+,m})|\\
    &\quad\leq C_6 \left( \eu^{-\mu n/2} + \eu^{-\mu mT}+\|\tilde{p}_{-,m,n} - p_{-,m}\|+\|\tilde{p}_{+,m,n} - p_{+,m}\|+ \|R_\theta-\eu^{B(n-m)T}\| \right).
\end{split}
\end{align}
Since $Q_0p_{\pm,m}\in \R^2$ with $\|Q_0p_{\pm,m}\|\geq 1/\|Q_0^{-1}\|$, there exists for every $m\in\N$ an angle $\theta\in [0,1]$ such that
\begin{align}\label{eq:estimate_on_Evansfunction_0}
    |\det(R_\theta Q_0p_{-,m}|Q_0p_{+,m})|\geq \frac{1}{\|Q_0^{-1}\|^2},
\end{align}
where we use that $|\det(v_1|v_2)|$ is the area of the parallelogram spanned by $v_1,v_2 \in \R^2$. This implies that
$$
    |\det(Q_0^{-1} R_\theta Q_0p_{-,m}|p_{+,m})| = |\det(Q_0^{-1})||\det(R_\theta Q_0p_{-,m}|Q_0p_{+,m})|
    \geq \frac{|\det(Q_0^{-1})|}{\|Q_0^{-1}\|^2}=:A.
$$
Subsequently, we fix $m_0\in \N$ even such that $C_6 \eu^{-\mu m_0T} < A/4$ and let $\theta_0 \in [0,1]$ such that~\eqref{eq:estimate_on_Evansfunction_0} holds. Since $\eta$ satisfies the nonresonance condition~\eqref{eq:non_resonance}, we infer by~\cite{Kronecker} the existence of a strictly increasing sequence $\{\tilde{n}_j\}_{j\in \N}\subset 2\N$ with $\tilde{n}_{j} \uparrow \infty$ such that $\tilde{n}_j\xi_\eta T/(2\pi) \mod 1 \to \theta_0$ as $j \to \infty$. We set $n_j:=\tilde{n}_j+m_0$. Then, we choose $j_0\in \N$ so large that
$$
    C_6 \left( \eu^{-\mu n_{j}/2} +\|\tilde{p}_{-,m_0,n_{j}} - p_{-,m_0}\|+\|\tilde{p}_{+,m_0,n_{j}} - p_{+,m_0}\|+ \|R_{\theta_0}-\eu^{B\tilde{n}_{j}T}\| \right)\leq \frac{A}{4}
$$
for all $j \geq j_0$. Estimate~\eqref{eq:estimate_determinant} implies that for all $j\geq j_0$ we have
$$
    |\det(U((n_j-m_0)T)|\tilde{p}_{+,m_0,n_j})-\det(Q_0^{-1} R_{\theta_0} Q_0p_{-,m_0}|p_{+,m_0})|\leq \frac{A}{2}.
$$
But since $|\det(Q_0^{-1} R_{\theta_0} Q_0p_{-,m_0}|p_{+,m_0})|\geq A > 0$, we deduce that $\det(U((n_j-m_0)T)|\tilde{p}_{+,m_0,n_j})\neq 0$ for all $j\geq j_0$. This means that for $m=m_0$ and $n = n_j$ with $j \geq j_0$ the boundary-value problem~\eqref{eq:nondegeneracy_bvp} has only the zero solution. So, the linear operator $L_-(\unu_{\eta,n_j})$ is injective. Since,~\eqref{eq:first_order_L-} has exponential dichotomies on $(-\infty,0]$ and $[0,\infty)$ with projections of rank one, we find by~\cite[Lemma~4.2]{Palmer1984} that the operator $L_-(\unu_{\eta,n_j})$ is Fredholm of index zero. In particular, injectivity of $L_-(\unu_{\eta,n_j})$ implies its invertibility. Thus, the operator $\El_0(\ubu_{\eta,n_j})$ is invertible and we have established nondegeneracy of $\ubu_{\eta,n_j}$ for all $j\in \N$.
\end{proof}

In the final step, we combine Proposition~\ref{prop:existence_2-front} with Lemma~\ref{lem:nondeg_L-} to construct nondegenerate stationary dark single-pulse solutions to~\eqref{eq:LLE_main_tw_mod_sys} for $0 < \eps \ll \eta \ll 1$.

\begin{Proposition}[Dark primary pulses for $\eps > 0$]\label{prop:existence_periodic_2-front}
Let $\eta_{1,2} > 0$ be as in Proposition~\ref{prop:existence_2-front} and Lemma~\ref{lem:splitting_Floquet_exponents}, respectively. Let $\smash{\int_0^T F(x) \de x<0}$. Assume that $\eta \in (0,\min\{\eta_1,\eta_2\})$ satisfies the nonresonance condition~\eqref{eq:non_resonance}. Let $\ubu_{\eta,n_j}$ be a nondegenerate solution to~\eqref{eq:LLE_main_tw_mod_sys}, obtained in Proposition~\ref{prop:existence_2-front}, with $\{n_j\}_{j\in \N}$ as in Lemma~\ref{lem:nondeg_L-}. Fix $j \in \N$.

Then, there exists $\eps_{0}>0$ such that for each $\eps \in (0,\eps_{0})$ there exists a nondegenerate stationary solution $\ubu_{\eta,n_j,\eps} \in H^2(\R) \oplus H_\per^2(0,T)$ to~\eqref{eq:LLE_main_tw_mod_sys} with
\begin{align*}
\lim_{\eps \downarrow 0} \|\ubu_{\eta,n_j,\eps} - \ubu_{\eta,n_j}\|_{L^\infty} = 0.
\end{align*}
\end{Proposition}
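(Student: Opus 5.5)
The plan is an implicit function theorem argument in $\eps$ around the fixed nondegenerate pulse $\ubu_{\eta,n_j}$, with the periodic background handled first. The solution has to lie in $H^2(\R)\oplus H^2_\per(0,T)$, so I will split it into a periodic part and a localized part.

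\emph{Step 1: periodic background.} The asymptotic state of $\unu_{\eta,n_j}$ is $v_{-,\eta}$. I will continue $\wb_{-,\eta}=(v_{-,\eta},0)^\top$ in $\eps$ as a $T$-periodic stationary solution of~\eqref{eq:LLE_main_tw_mod_sys}. The map is $\mathcal{G}(\wb,\eps)=J(\wb''+\zeta\wb-|\wb|^2\wb)+\eps(\omega\wb'-\wb)+\eta J\mathbf{F}$ from $H^2_{\R,\per}(0,T)\times\R$ to $L^2_{\R,\per}(0,T)$. Its derivative in $\wb$ at $\eps=0$ is $\El_0(\wb_{-,\eta})=J\,\mathrm{diag}(L_+(v_{-,\eta}),L_-(v_{-,\eta}))$, acting on $T$-periodic functions.
\begin{itemize}
\item For the $L_-$ block, I use that~\eqref{eq:periodic_system-} has real Floquet exponents $\pm\nu_\eta\neq0$ (Lemma~\ref{lem:splitting_Floquet_exponents}). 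Hence there is no Floquet exponent in $\iu\R$, in particular none equal to $0$ modulo $2\pi\iu/T$, and $L_-(v_{-,\eta})$ is invertible on $H^2_\per(0,T)$.
\item For the $L_+$ block, invertibility on the full line follows from the nondegeneracy of $\underline{\psi}_{+,\eta}$ (Proposition~\ref{prop:existence_1-front}). That property forces the essential spectrum of $L_+(\underline{\psi}_{+,\eta})$, which contains the spectrum of $L_+(v_{-,\eta})$ on $L^2(\R)$, to avoid $0$. The periodic spectrum sits inside the whole-line spectrum, so $L_+(v_{-,\eta})$ is invertible on $H^2_\per(0,T)$ as well.
\end{itemize}
The implicit function theorem then gives a smooth branch $\wb_\eps$ with $\wb_0=\wb_{-,\eta}$.

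\emph{Step 2: localized correction.} I make the ansatz $\ub=\ubu_{\eta,n_j}-\wb_{-,\eta}+\wb_\eps+\mathbf{a}$ with $\mathbf{a}\in H^2(\R)$ and define
$$\mathcal{H}(\mathbf{a},\eps)=\mathcal{G}\big(\ubu_{\eta,n_j}-\wb_{-,\eta}+\wb_\eps+\mathbf{a},\eps\big)-\mathcal{G}(\wb_\eps,\eps).$$
I need to check that $\mathcal{H}$ maps $H^2(\R)\times\R$ into $L^2(\R)$ and is $C^1$. This holds because the difference of the two nonlinear terms is localized: it involves products with the localized function $\ubu_{\eta,n_j}-\wb_{-,\eta}\in H^2(\R)$, the periodic parts cancel, and the forcing cancels. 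We have $\mathcal{H}(0,0)=0$, since $\ubu_{\eta,n_j}$ solves the $\eps=0$ equation. Also $\partial_{\mathbf{a}}\mathcal{H}(0,0)=\El_0(\ubu_{\eta,n_j})$, which is invertible $H^2(\R)\to L^2(\R)$ by Lemma~\ref{lem:nondeg_L-} together with the invertibility of $L_+(\unu_{\eta,n_j})$ from Proposition~\ref{prop:existence_2-front}.

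\emph{Step 3: conclusion.} The implicit function theorem yields $\eps_0>0$ and a continuous branch $\mathbf{a}_\eps$ with $\mathbf{a}_0=0$. Then $\ubu_{\eta,n_j,\eps}:=\ubu_{\eta,n_j}-\wb_{-,\eta}+\wb_\eps+\mathbf{a}_\eps$ lies in $H^2(\R)\oplus H^2_\per(0,T)$ and solves~\eqref{eq:LLE_main_tw_mod_sys}. The $L^\infty$ convergence as $\eps\downarrow0$ follows from $H^2\hookrightarrow L^\infty$ and the continuity of both branches.

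Nondegeneracy means invertibility of $\El_\eps(\ubu_{\eta,n_j,\eps})-\eps$. I will deduce it from openness of invertibility, since this operator is a small perturbation of $\El_0(\ubu_{\eta,n_j})$ in $\mathcal{L}(H^2,L^2)$. The linearized multiplication terms converge in $L^\infty$, and $\eps\omega J\partial_x+\eps$ is $\mathcal{O}(\eps)$ from $H^2$ to $L^2$.

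The main obstacle is Step 1, namely invertibility of the $L_+$ block on the periodic end state. If the spectral inclusion argument through $\underline{\psi}_{+,\eta}$ turns out to be too loose, I would instead expand directly: at $\eta=0$ we have $L_+(\mp\sqrt{\zeta})=\partial_x^2-2\zeta$, which is invertible, and this persists for small $\eta$, using $\eta<\eta_1$ after shrinking $\eta_1$ if necessary.
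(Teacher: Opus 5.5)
Your proposal is correct and is essentially the paper's argument. The paper applies the implicit function theorem once on $H^2(\R)\oplus H^2_\per(0,T)$, using that $\El_0(\ubu_{\eta,n_j})$ is upper triangular with invertible diagonal blocks $\El_0(\ubu_{\eta,n_j})|_{H^2(\R)}$ and $\El_0((v_{-,\eta},0)^\top)|_{H^2_\per(0,T)}$; this is exactly the decoupling you exploit by first continuing the periodic background and then the localized correction. Your explicit check that the $L_+(v_{-,\eta})$ block of the periodic linearization is invertible is a useful addition, since the paper only cites Lemma~\ref{lem:splitting_Floquet_exponents}, which covers only the $L_-$ block.
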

\begin{proof}
Set $X:=H^2(\R)\oplus H^2_\per(0,T)$ and $Y:=L^2(\R)\oplus L^2_\per(0,T)$. We introduce the smooth nonlinear map $\mathcal{F}_\eta\colon X\times\R\to Y$, given by
$$
    \mathcal{F}_\eta(\ub,\eps)
    =
    J\big(\ub_{xx}+\zeta\ub-|\ub|^2\ub\big)
    +\eps(\omega\ub_x-\ub)+\eta J\mathbf F.
$$
Then, it holds $\smash{\mathcal{F}_\eta(\ubu_{\eta,n_j},0)=0}$ and $\smash{\partial_{\ub}\mathcal{F}_\eta(\ubu_{\eta,n_j},0) = \El_0(\ubu_{\eta,n_j})}$, where $\El_0(\ubu_{\eta,n_j})$ is the linearization operator given by~\eqref{eq:defL_eps}, now acting from $X$ into $Y$. We claim that $\partial_{\ub}\mathcal{F}_\eta(\ubu_{\eta,n_j},0)$ is an isomorphism. To prove this, set $\smash{\vb_{-,\eta}:=(v_{-,\eta},0)^\top}$. Then, we have $\smash{\ubu_{\eta,n_j}-\vb_{-,\eta}\in H^2(\R)}$ by Propositions~\ref{prop:existence_1-front} and~\ref{prop:existence_2-front} and, thus, the difference $\El_0(\ubu_{\eta,n_j})- \El_0(\vb_{-,\eta})$ is given by multiplication by a function in $H^2(\R)$. Hence, with respect to the decompositions $X = H^2(\R) \oplus H^2_\per(0,T)$ and $Y = L^2(\R) \oplus L^2_\per(0,T)$, this gives the upper-triangular block representation 
$$
    \El_0(\ubu_{\eta,n_j})=
    \begin{pmatrix}
    \mathfrak{L}_{\mathrm{loc}} & \mathfrak{K}\\
    0 & \mathfrak{L}_\per
    \end{pmatrix},
$$
where $\smash{\mathfrak{L}_{\mathrm{loc}} = \El_0(\ubu_{\eta,n_j})|_{H^2(\R)}}$, $\smash{\mathfrak{L}_\per = \El_0(\vb_{-,\eta})|_{H_\per^2(0,T)}}$, and $\mathfrak{K} := \smash{\El_0(\ubu_{\eta,n_j})- \El_0(\vb_{-,\eta})}$ acts as an operator from $H_\per^2(0,T)$ into $L^2(\R)$. The operators $\mathfrak{L}_\mathrm{loc}$ and $\mathfrak{L}_\per$ on the diagonal are invertible by Lemmas~\ref{lem:nondeg_L-} and~\ref{lem:splitting_Floquet_exponents}, respectively. Therefore, $\El_0(\ubu_{\eta,n_j})\colon X\to Y$ is also invertible, as claimed. The implicit function theorem now yields $\eps_{0}>0$ and a smooth branch $(-\eps_{0},\eps_{0})\to X$, $\eps\mapsto\ubu_{\eta,n_j,\eps}$ of stationary solutions with $\ubu_{\eta,n_j,\eps}\to\ubu_{\eta,n_j}$ in $X$ as $\eps\to0$. In particular, the $L^\infty$-convergence follows from the continuous embedding $X \hookrightarrow L^\infty(\R)$. Finally, the nondegeneracy of $\ubu_{\eta,n_j,\eps}$ is a direct consequence of the smoothness in $\eps$.
\end{proof}

\paragraph*{Proofs of Theorem~\ref{thm:main_dark} and Corollary~\ref{cor:dark_FC}.} The proof of Theorem~\ref{thm:main_dark} follows directly from Propositions~\ref{prop:existence_1-front},~\ref{prop:existence_2-front}, and~\ref{prop:existence_periodic_2-front}. The dark primary pulses constructed in Proposition~\ref{prop:existence_periodic_2-front} are nondegenerate. Hence, \cite[Theorem~3.1]{Bengel2026Multiple} yields stationary $M$-pulse solutions to~\eqref{eq:LLE_main_tw_mod_sys}, which arise near the formal concatenation of $M$ primary pulses. By \cite[Lemma~3.3]{Bengel2026Multiple}, these $M$-pulse solutions are nondegenerate. Finally, \cite[Theorem~4.1]{Bengel2026Multiple} allows us to periodically extend the $M$-pulse solutions, yielding the soliton-based frequency-comb solutions stated in Corollary~\ref{cor:dark_FC}, thereby completing the proof.

\section{Numerical simulations}\label{sec:numerics}
In this section, we present numerical simulations corroborating the existence and stability results, obtained in Theorems~\ref{thm:main_bright} and~\ref{thm:main_dark} and Corollaries~\ref{cor:bright_FC} and~\ref{cor:dark_FC}. All approximations of stationary solutions to~\eqref{eq:LLE2} are computed with the Matlab continuation package \texttt{pde2path}~\cite{pde2path}. We impose periodic boundary conditions in the spatial variable $x$ throughout.

\begin{figure}[b!]
 \centering
 \includegraphics[width=\textwidth]{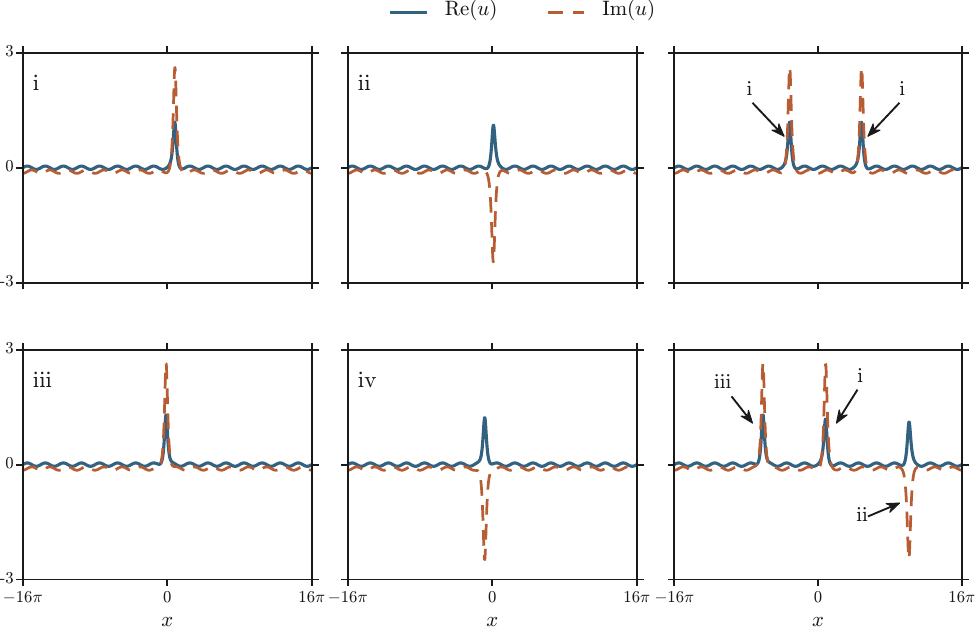}
 \caption{Bright single- and multipulse solutions to~\eqref{eq:LLE_main} for $d=1$, $\zeta=4$, $\omega=1$, $f_0=4$, $f_1=2.1$, and $\eps=0.1$, computed with periodic boundary conditions. The four primary pulses \textup{i}--\textup{iv} correspond to the four roots of the Melnikov conditions in~\eqref{eq:numerics_melnikov_roots}. The stability has been checked computing the eigenvalues of the discretized linearized operator and yields that solution~\textup{i} is spectrally stable, solutions~\textup{ii} and~\textup{iii} are unstable through real eigenvalues, and solution~\textup{iv} exhibits an oscillatory instability. The right panels show a stable two-pulse composed of two copies of~\textup{i} and an unstable three-pulse composed of \textup{iii}, \textup{i}, and~\textup{ii}.}
 \label{fig:bright_profiles}
\end{figure}

Let us first consider the anomalous dispersion regime. We then focus on stationary solutions to equation~\eqref{eq:LLE_main}. In accordance with the physically relevant setting of bichromatic forcing, we set $F(x)=f_0+f_1\mathrm{e}^{\mathrm{i}x}$; see Remark~\ref{rem:derivation}. For this choice of forcing, the Melnikov condition in Theorem~\ref{thm:main_bright} can be evaluated explicitly and yields, modulo $2\pi$, the four solutions
\begin{equation}\label{eq:numerics_melnikov_roots}
 \theta_0=\pm\theta_*,\qquad \sigma_0=\theta_0\pm\frac{\pi}{2},\qquad
 \theta_*:=\arccos\left(\frac{4\sqrt{\zeta}}
 {\pi\sqrt{2}\,f_0}\right).
\end{equation}
\begin{figure}[b!]
 \centering
 \includegraphics[width=.9\textwidth]{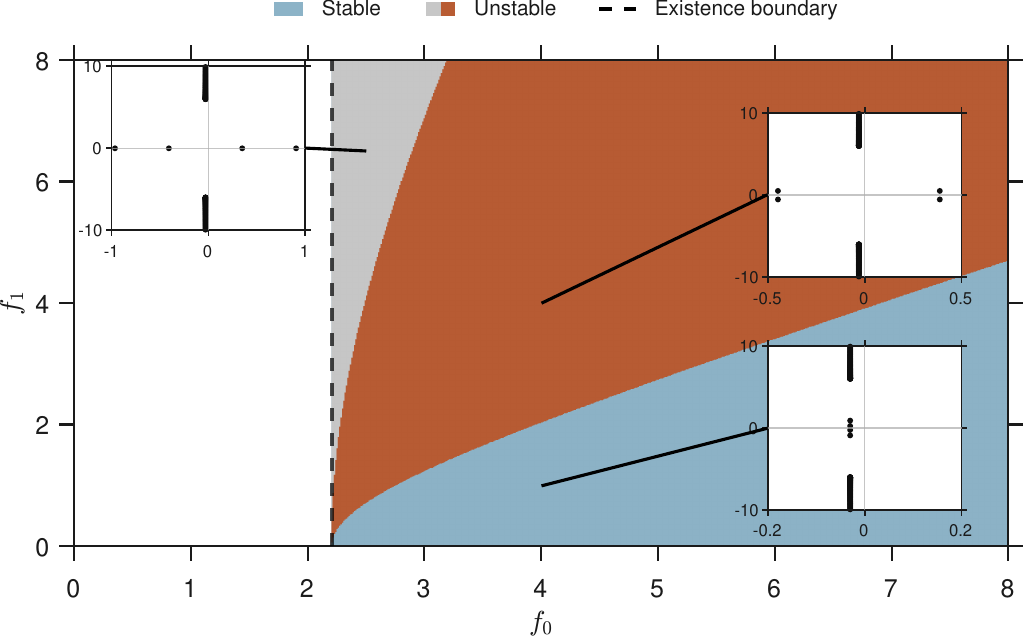}
 \caption{Stability chart in the $(f_0,f_1)$-plane for $d=1$, $\zeta=6$, and $\omega=1$, on the branch selected by \eqref{eq:numerics_selected_root}. The dashed line marks the existence boundary. The blue region corresponds to four distinct purely imaginary roots of the reduced Evans function, the gray region to a real instability, and the copper region to an oscillatory instability. The stability information is obtained from the reduced Evans function \eqref{eq:numerics_reduced_matrix} and confirmed by the insets that show the eigenvalues of the discretized linearization at $\eps=0.1$ for the indicated parameter values.}
 \label{fig:stability_chart}
\end{figure}
In particular, these roots exist provided that the existence condition $f_0^2\pi^2>8\zeta$ holds. Strikingly, they do not dependent on $f_1$ and thus the condition coincides with the one derived in the case of monochromatic forcing ($f_1 =0$); see, for instance,~\cite{Bengel2025Existence}. It is also useful to record the corresponding explicit form of the matrices entering the reduced Evans function $D_{\theta_0,\sigma_0}$. Setting
$$
 \alpha=f_1\pi\sqrt{2}\,
 \operatorname{sech}\!\left(\frac{\pi}{2\sqrt{\zeta}}\right)
 \sin(\sigma_0-\theta_0),
 \qquad
 \beta=f_0\pi\sqrt{2}\sin(\theta_0),
$$
we find
\begin{equation}\label{eq:numerics_reduced_matrix}
 A_{\theta_0,\sigma_0}=
 \begin{pmatrix}
  \alpha & \alpha\\
  \alpha & \alpha-\beta
 \end{pmatrix},
 \qquad
 B=
 \begin{pmatrix}
  \sqrt{\zeta}&0\\
  0&-1/\sqrt{\zeta}
 \end{pmatrix},
 \qquad
 D_{\theta_0,\sigma_0}(\lambda)
 =\det\!\left(A_{\theta_0,\sigma_0}+\lambda^2B\right).
\end{equation}

For the computations in Figure~\ref{fig:bright_profiles}, we chose the parameters $d=1, \zeta=4, \omega=1,f_0=4, f_1=2.1,$ and continue each of the four primary pulses selected by~\eqref{eq:numerics_melnikov_roots} from the NLS limit to $\eps=0.1$. The bifurcating solutions are labeled \textup{i}--\textup{iv} in the left and middle panels of Figure~\ref{fig:bright_profiles} and all four profiles consist of a strongly localized pulse superposed on a small-amplitude periodic background, in agreement with the approximation established in Theorem~\ref{thm:main_bright}. We determine their numerical stability by computing the eigenvalues of the discretized linearization. While solution~\textup{i} is spectrally stable, solutions~\textup{ii} and~\textup{iii} possess real unstable eigenvalues and  solution~\textup{iv} exhibits an oscillatory instability caused by a nonreal complex-conjugate pair in the open right half-plane. The right panels of Figure~\ref{fig:bright_profiles} illustrate well-separated multipulses: the two-pulse formed from two copies of the stable primary pulse~\textup{i} is spectrally stable, the three-pulse assembled from the primary pulses \textup{iii}, \textup{i}, and~\textup{ii} is unstable, in agreement with the statements in Corollary~\ref{cor:bright_FC}.

\begin{figure}[b!]
 \centering
 \includegraphics[width=.75\textwidth]{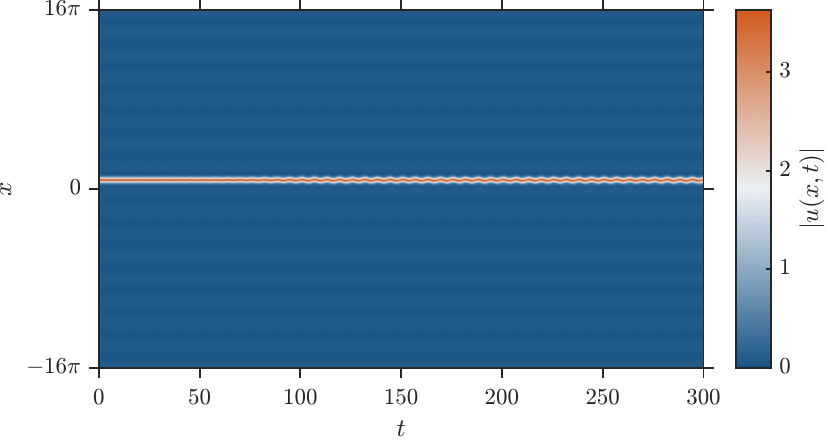}
 \caption{Time evolution of the modulus $|u(x,t)|$ of a perturbed primary bright pulse for
 $d=1$, $\zeta=6$, $\omega=1$, $f_0=4$, $f_1=2.15$, and $\eps=0.1$. The initial condition is a small perturbation of the associated oscillatory unstable stationary pulse on the branch selected by \eqref{eq:numerics_selected_root}.}
 \label{fig:oscillatory_dynamics}
\end{figure}

We next use the reduced Evans function~\eqref{eq:numerics_reduced_matrix} to examine how the stability of the primary pulse depends on the forcing amplitudes $f_0,f_1$. To this end, we set $d=1$, $\zeta=6$, and $\omega=1$, vary $(f_0,f_1)$, and select the Melnikov root
\begin{equation}\label{eq:numerics_selected_root}
 \theta_0=\arccos\!\left(\frac{4\sqrt{\zeta}}
 {\pi\sqrt{2}\,f_0}\right),
 \qquad
 \sin(\sigma_0-\theta_0)=1.
\end{equation}
The stability chart in Figure~\ref{fig:stability_chart} is obtained by computing the eigenvalues of the $2\times2$ matrix pencil in \eqref{eq:numerics_reduced_matrix}, or, equivalently, the roots of $D_{\theta_0,\sigma_0}$. In the stable region, $D_{\theta_0,\sigma_0}$ has four distinct purely imaginary roots which leads to all spectrum residing on the the line $\Re(z) = -0.1$. The two unstable regions are distinguished by whether the reduced problem possesses a positive real root (instability through a real unstable eigenvalue) or a genuinely complex root with positive real part (instability through a pair of conjugate unstable eigenvalues). These correspond, respectively, to a nonoscillatory and an oscillatory instability. We compute the spectrum of the discretized full linearization at $\eps=0.1$ for the three parameter values indicated by the insets in Figure~\ref{fig:stability_chart}. Finally, the dashed vertical line represents the existence boundary $f_0\pi\sqrt{2}=4\sqrt{\zeta}$.

To illustrate the destabilization mechanism generated by the oscillatory instability, we further compute a primary pulse for the parameter values as in Figure~\ref{fig:stability_chart} with $(f_0,f_1)=(4,2.15)$. These parameter values lie just above the oscillatory stability boundary in Figure~\ref{fig:stability_chart}. We then take a small perturbation of the resulting unstable stationary pulse as initial datum for a direct time integration of~\eqref{eq:LLE2}. As shown in Figure~\ref{fig:oscillatory_dynamics}, the perturbed pulse exhibits small oscillations that start growing and lead to a temporally oscillating pulse. The dynamics are consistent with a Hopf bifurcation and are to leading order governed by the amplitude equation for the two critical oscillatory modes~\cite{ChangPromislow2007}.

\begin{figure}[t]
 \centering
 \includegraphics[width=\textwidth]{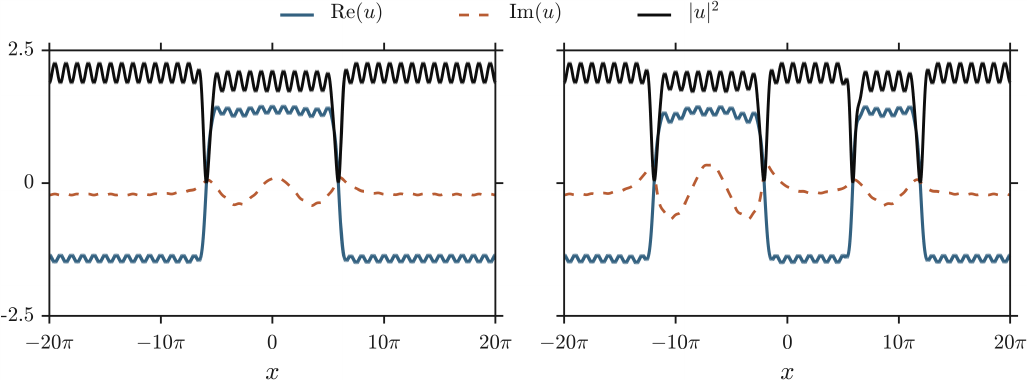}
 \caption{Dark pulse solutions to~\eqref{eq:LLE_main_tw_mod} for $d=-1$, $\zeta=2$, $\eps=0.01$, $\eta=0.55$, $\omega=0.5$, and $F(x)=-0.2+\cos(2x)$. The left panel shows a dark primary pulse formed by a front and a back, while the right panel shows a dark two-pulse containing four domain walls.}
 \label{fig:dark_profiles}
\end{figure}

Finally, we consider the case of normal dispersion and focus on stationary solutions to~\eqref{eq:LLE_main_tw_mod}. We compute approximations of the dark pulse solutions from Theorem~\ref{thm:main_dark} and Corollary~\ref{cor:dark_FC}, taking $d=-1,\zeta=2, \omega=0.5, F(x)=-0.2+\cos(2x)$. We perform numerical path continuation from the NLS limit up to $\eps=0.01$ and $\eta=0.55$. The left panel of Figure~\ref{fig:dark_profiles} shows a dark primary pulse. Its real part consists of a front and a back connecting to the same small-amplitude periodic end state through a long periodic plateau near the opposite constant NLS state. Correspondingly, the intensity $|u|^2$ has two pronounced minima, one at each domain wall. The right panel shows a dark two-pulse near the formal concatenation of two such primary pulses; see Corollary~\ref{cor:dark_FC}. It contains four well-separated domain walls and two intermediate plateau states.

\appendix
\section{Technical computations}\label{app:computations}
We present the proof of Lemma~\ref{lem:formula_for_matrices_AB}.

\begin{proof}[Proof of Lemma~\ref{lem:formula_for_matrices_AB}]
We begin by computing the entries of the matrix $\mathcal{A}$, defined in~\eqref{eq:matrices_reduces_evp}. To facilitate the calculations, we introduce the matrix
$$
    T = \frac{1}{2}
    \begin{pmatrix}
        1 & 1 \\ -\iu & \iu
    \end{pmatrix} \in \C^{2\times 2} \qquad \text{with inverse}\qquad
    T^{-1} =
    \begin{pmatrix}
        1 & \iu \\ 1 & -\iu
    \end{pmatrix},
$$
which represents the change of coordinates from $(u,\overline{u})^\top$ to $(\Re(u),\Im(u))^\top$. In particular, we have $\tilde{\mathbf{p}}_0 := T^{-1} \mathbf{p}_0 = ( u_0',\overline{u_0'})^\top$, $\tilde{\mathbf{q}}_0 := T^{-1} \mathbf{q}_0 = (\iu u_0,-\iu\overline{u_0})^\top$, and
\begin{align*}
    \tilde{L}_1 := T^* L_1 T =
    \begin{pmatrix}
        -u_0 \overline{u_1} - \overline{u_0} u_1 & - u_0 u_1 \\
        - \overline{u_0 u_1} & - u_0 \overline{u_1} - \overline{u_0} u_1
    \end{pmatrix} 
    + \frac{\omega}{2} 
    \begin{pmatrix}
        \iu & 0 \\ 0 & -\iu
    \end{pmatrix} \partial_x.
\end{align*}
Consequently, we obtain
$$
    \mathcal{A} = 
    \begin{pmatrix}
        \langle \tilde{L}_1 \tilde{\mathbf{p}}_0,\tilde{\mathbf{p}}_0\rangle_{L^2} &
        \langle \tilde{L}_1 \tilde{\mathbf{p}}_0,\tilde{\mathbf{q}}_0\rangle_{L^2} \\
        \langle \tilde{L}_1 \tilde{\mathbf{q}}_0,\tilde{\mathbf{p}}_0\rangle_{L^2} &
        \langle \tilde{L}_1 \tilde{\mathbf{q}}_0,\tilde{\mathbf{q}}_0\rangle_{L^2} 
    \end{pmatrix}.
$$
We proceed with computing the scalar products. We have
\begin{align}
    \tilde{L}_1 \tilde{\mathbf{p}}_0 &=\frac{\omega}{2}  
    \begin{pmatrix}
        \iu u_0'' \\ -\iu \overline{u_0''}
    \end{pmatrix}
    -
	\begin{pmatrix}
		u_0 \overline{u_1} u_0' + \overline{u_0} u_1 u_0' + u_0 u_1 \overline{u_0'} \\
		\overline{u_0}u_1 \overline{u_0'} + u_0 \overline{u_1 u_0'} + \overline{u_0 u_1} u_0'
	\end{pmatrix}.
\end{align}
So, combining this with the formula~\eqref{eq:expansion_u} for $u_1$, we deduce
\begin{align*}
	\langle \tilde{L}_1 \tilde{\mathbf{p}}_0, \tilde{\mathbf{p}}_0 \rangle_{L^2}
	&= -4 \Re \int_\R u_0 \overline{u_1} |u_0'|^2\de x - 2\Re \int_\R u_0 u_1 (\overline{u_0'})^2\de x \\
	&= -6 \Re \int_\R |u_0'|^2 u_0 \overline{(v_1 + w_1 )} \de x,
\end{align*}
where we used that $u_0 \overline{u_0'} = \overline{u_0} u_0'$, $u_0 \overline{u_0''} = \overline{u_0} u_0''$ and that
\begin{align*}
	\int_\R |u_0'|^2 u_0 \overline{u_0'} \de x
\end{align*}
vanishes, since this is an integral over an odd function converging to $0$ at $\pm \infty$. By Lemma~\ref{lem:nonsingular_equation}, $w_1$ is the unique solution of the inhomogeneous equation
\begin{align*}
	\mathfrak{L}_{\theta_0,\sigma_0} w_1 = 2 |u_0|^2 v_1 + u_0^2 \overline{v_1} - \iu \omega u_0' \in \ker(\mathfrak{L}_{\theta_0,\sigma_0})^\perp = \mathrm{ran}(\mathfrak{L}_{\theta_0,\sigma_0}).
\end{align*}
From now on, we denote by $\smash{\mathfrak{L}_{\theta_0,\sigma_0}^{-1}\colon \mathrm{ran}(\mathfrak{L}_{\theta_0,\sigma_0})}\to\ker(\mathfrak{L}_{\theta_0,\sigma_0})^\perp $ the inverse of the restricted self-adjoint operator $\smash{\mathfrak{L}_{\theta_0,\sigma_0}\colon \ker(\mathfrak{L}_{\theta_0,\sigma_0})^\perp \to \mathrm{ran}(\mathfrak{L}_{\theta_0,\sigma_0})}$. Clearly, $\smash{\mathfrak{L}_{\theta_0,\sigma_0}^{-1}}$ must also be self adjoint. This yields
\begin{align}\label{eq:I}
\begin{split}
    \Re \int_\R |u_0'|^2 u_0 \overline{w_1} \de x & = 
	\Re \int_\R |u_0'|^2 u_0 \overline{\mathfrak{L}_{\theta_0,\sigma_0}^{-1}(2 |u_0|^2 v_1 + u_0^2 \overline{v_1} - \iu \omega u_0')} \de x  \\
	& = \Re \int_\R \mathfrak{L}_{\theta_0,\sigma_0}^{-1}(|u_0'|^2 u_0) \overline{(2 |u_0|^2 v_1 + u_0^2 \overline{v_1} - \iu \omega u_0')} \de x.
\end{split}
\end{align}
Differentiating the equation
\begin{align*}
	0 = \mathfrak{L}_{\theta_0,\sigma_0} u_0' = - u_0''' + \zeta u_0' - 2 |u_0|^2 u_0' - u_0^2 \overline{u_0'},
\end{align*}
we obtain
\begin{align*}
	\mathfrak{L}_{\theta_0,\sigma_0} u_0'' = 6 |u_0'|^2 u_0 \quad \implies \quad
	\mathfrak{L}_{\theta_0,\sigma_0}^{-1}(|u_0'|^2 u_0 ) = \frac{1}{6} u_0''.
\end{align*}
Using $\iu u_0'' \overline{u_0} \in \iu\R$ and integrating by parts, this yields
\begin{align*}
	\eqref{eq:I}& = \frac{1}{6} \Re \int_\R u_0'' (2 |u_0|^2 \overline{v_1} + \overline{u_0}^2  v_1  + \iu \omega \overline{u_0'}) \de x \\
	& = \frac{1}{2} \Re \int_\R u_0'' |u_0|^2 \overline{v_1} \de x \\
	& = - \frac{1}{2} \Re \int_\R \big(2 |u_0'|^2 u_0 \overline{v_1} + u_0' |u_0|^2 \overline{v_1'} \big)\de x.
\end{align*}
In conclusion, we arrive at
\begin{align*}
	\langle \tilde{L}_1 \tilde{\mathbf{p}}_0, \tilde{\mathbf{p}}_0 \rangle_{L^2}
	= 3\Re\int_\R |u_0|^2 u_0' \overline{v_1'} \de x.
\end{align*}
Using $u_0 = \phi_{\theta_0,\sigma_0}$ and $v_1 = - \iu (-\partial_x^2+\zeta)^{-1}F$, we infer
\begin{align*}
    3\Re\int_\R |u_0|^2 u_0' \overline{v_1'} \de x 
    &=3 \Re \iu\eu^{\iu\theta_0} \int_\R (-\partial_x^2+\zeta)^{-1}(\phi_{0,\sigma_0}^2\phi_{0,\sigma_0}') \partial_x\overline{F} \de x \\
    &=\Re \iu\eu^{\iu\theta_0} \int_\R \phi_{0,0}' \overline{F'(x+\sigma_0)} \de x \\
    &= \Im \langle \eu^{-\iu\theta_0}F'(\cdot+\sigma_0),\phi_{0,0}'\rangle_{L^2}.
\end{align*}
Thus, we have established
$$
    \langle L_1 \mathbf{p}_0, \mathbf{p}_0 \rangle_{L^2}
    = \langle \tilde{L}_1 \tilde{\mathbf{p}}_0, \tilde{\mathbf{p}}_0 \rangle_{L^2}
	= \Im \langle \eu^{-\iu\theta_0}F'(\cdot+\sigma_0),\phi_{0,0}'\rangle_{L^2}
    =-\Im \langle \eu^{-\iu\theta_0}F''(\cdot+\sigma_0),\phi_{0,0}\rangle_{L^2}.
$$

Next, we compute $\langle \tilde{L}_1 \tilde{\mathbf{q}}_0, \tilde{\mathbf{p}}_0 \rangle_{L^2}$. First, we observe
\begin{align*}
	 \langle \tilde{L}_1 \tilde{\mathbf{q}}_0, \tilde{\mathbf{p}}_0 \rangle_{L^2}
	& = - \omega \int_\R |u_0'|^2 \de x -\iu \int_\R \big(u_0^2 \overline{u_1 u_0'} - \overline{u_0}^2 u_1 u_0' \big)\de x \\
	& = - \omega \int_\R |u_0'|^2 \de x - 2 \Re\int_\R \iu u_0^2 \overline{u_0'} \overline{(\iu \theta'(0) u_0 + \sigma'(0) u_0' + v_1+ w_1)} \de x \\
	& = - \omega \int_\R |u_0'|^2 \de x - 2 \Re \int_\R \big(\iu u_0^2 \overline{u_0' v_1} + \iu u_0^2 \overline{u_0' w_1}\big) \de x.
\end{align*}
It holds
\begin{align}\label{eq:III}
\begin{split}
	\Re \int_\R \iu u_0^2 \overline{u_0' w_1} \de x & =
	\Re \int_\R \iu u_0^2 \overline{u_0' \mathfrak{L}_{\theta_0,\sigma_0}^{-1}(2 |u_0|^2 v_1 + u_0^2 \overline{v_1} - \iu \omega u_0')} \de x \\
	& = \Re \int_\R \mathfrak{L}_{\theta_0,\sigma_0}^{-1}(\iu u_0^2 \overline{u_0'}) \overline{(2 |u_0|^2 v_1 + u_0^2 \overline{v_1} - \iu \omega u_0')} \de x.
\end{split}
\end{align}
Using that $u_0$ solves the NLS equation~\eqref{NLS}, we obtain
\begin{align*}
	\mathfrak{L}_{\theta_0,\sigma_0}(\iu u_0') =
	- \iu u_0'''+ \iu \zeta u_0' - 2\iu |u_0|^2 u_0' + \iu u_0^2 \overline{u_0'} 
	= \iu \mathfrak{L}_{\theta_0,\sigma_0}u_0' + 2 \iu u_0^2 \overline{u_0'} 
	= 2\iu u_0^2 \overline{u_0'},
\end{align*}
implying
\begin{align*}
	\mathfrak{L}_{\theta_0,\sigma_0}^{-1} (\iu u_0^2 \overline{u_0'}) =
	\frac{\iu}{2} u_0'.
\end{align*}
Thus, we deduce
\begin{align*}
	\eqref{eq:III}& = \frac{1}{2}\Re \int_\R \iu u_0' (2 |u_0|^2 \overline{v_1} + \overline{u_0}^2 v_1 + \iu \omega \overline{u_0'}) \de x \\
	& = \frac12 \Re\int_\R 2 \big(\iu u_0^2 \overline{u_0' v_1} + \iu u_0' \overline{u_0}^2 v_1\big)\de x - \frac{1}{2} \omega \int_\R |u_0'|^2 \de x \\
	& = \frac{1}{2} \Re\int_\R \iu u_0^2 \overline{u_0' v_1} \de x - \frac12 \omega \int_\R |u_0'|^2 \de x
\end{align*}
and arrive at
\begin{align*}
	\langle L_1 \mathbf{q}_0, \mathbf{p}_0 \rangle_{L^2} = \langle \tilde{L}_1 \tilde{\mathbf{q}}_0, \tilde{\mathbf{p}}_0 \rangle_{L^2} = - 3\Re \int_\R \iu u_0^2 \overline{u_0' v_1} \de x.
\end{align*}
Next, we compute
\begin{align*}
    \Re \int_\R \iu u_0^2 \overline{u_0' v_1} \de x 
    &=-\Re \eu^{\iu\theta_0} \int_\R (-\partial_x^2+\zeta)^{-1}(\phi_{0,\sigma_0}^2\phi_{0,\sigma_0}') \overline{F}\de x \\
    &=-\frac{1}{3}\Re \eu^{\iu\theta_0} \int_\R \phi_{0,\sigma_0}' \overline{F}\de x \\
    &= \frac{1}{3} \Re\langle \eu^{-\iu\theta_0} F'(\cdot+\sigma_0),\phi_{0,0}\rangle_{L^2},
\end{align*}
as well as
$$
    \int_\R |u_0'|^2 \de x = \frac{4}{3} \zeta^{3/2}.
$$
This yields the formula
$$
    \langle L_1 \mathbf{q}_0, \mathbf{p}_0 \rangle_{L^2} = -\Re\langle \eu^{-\iu\theta_0} F'(\cdot+\sigma_0),\phi_{0,0}\rangle_{L^2}.
$$
In addition, since it holds $L_1^* = L_1$, we infer 
\begin{align*}
	\langle L_1 \mathbf{q}_0, \mathbf{p}_0 \rangle_{L^2} = \langle \mathbf{q}_0, L_1 \mathbf{p}_0 \rangle_{L^2} = \overline{\langle L_1 \mathbf{p}_0, \mathbf{q}_0 \rangle_{L^2}} = \langle L_1 \mathbf{p}_0, \mathbf{q}_0 \rangle_{L^2}.
\end{align*}

We proceed with the computation of the last entry of the matrix $\mathcal{A}$. It holds
\begin{align*}
	\langle \tilde{L}_1 \tilde{\mathbf{q}}_0, \tilde{\mathbf{q}}_0 \rangle_{L^2}
	& = - \int_\R \big( |u_0|^2 u_0 \overline{u_1} + |u_0|^2 \overline{u_0} u_1 \big)\de x \\
	& = -2 \Re \int_\R |u_0|^2 u_0 \overline{(\iu \theta'(0) u_0 + \sigma'(0) u_0' + v_1+ w_1)} \de x \\
	& = -2 \Re\int_\R \big( |u_0|^2 u_0 \overline{v_1} + |u_0|^2 u_0 \overline{w_1}\big) \de x.
\end{align*}
Again, we use the inhomogeneous equation for $w_1$ to compute
\begin{align}\label{eq:II}
	\Re\int_\R |u_0|^2 u_0 \overline{w_1} \de x & = 
	\Re \int_\R |u_0|^2 u_0 \overline{\mathfrak{L}_{\theta_0,\sigma_0}^{-1}(2 |u_0|^2 v_1 + u_0^2 \overline{v_1} - \iu\omega u_0')} \de x \notag\\
	& = \Re \int_\R \mathfrak{L}_{\theta_0,\sigma_0}^{-1}(|u_0|^2 u_0) \overline{(2 |u_0|^2 v_1 + u_0^2 \overline{v_1} - \iu\omega u_0')} \de x.
\end{align}
Using that $u_0$ solves the NLS equation~\eqref{NLS}, we have
\begin{align*}
	\mathfrak{L}_{\theta_0,\sigma_0} u_0 = - u_0'' + \zeta u_0 - 2 |u_0|^2 u_0 - u_0^2 \overline{u_0} = - 2 |u_0|^2 u_0,
\end{align*}
which implies
\begin{align*}
	\mathfrak{L}_{\theta_0,\sigma_0}^{-1}(|u_0|^2 u_0) = -\frac{1}{2} u_0.
\end{align*}
In particular, we establish
\begin{align*}
	\eqref{eq:II}& = - \frac{1}{2} \Re\int_\R u_0 (2 |u_0|^2 \overline{v_1} + \overline{u_0}^2  v_1 + \iu \omega \overline{u_0'}) \de x\\
	& = - \frac{1}{2} \Re\int_\R \big( 2 |u_0|^2 u_0 \overline{v_1} + |u_0|^2 \overline{u_0} v_1 \big) \de x \\
	& = - \frac{3}{2} \Re \int_\R |u_0|^2 u_0 \overline{v_1} \de x
\end{align*}
and, thus, we arrive at the formula
\begin{align*}
	\langle \tilde{L}_1 \tilde{\mathbf{q}}_0, \tilde{\mathbf{q}}_0 \rangle_{L^2} = \Re \int_\R |u_0|^2 u_0 \overline{v_1} \de x.
\end{align*}
Moreover, we establish
\begin{align*}
     \Re \int_\R |u_0|^2 u_0 \overline{v_1} \de x
     & =\Re \iu \eu^{\iu \theta_0} \int_\R (-\partial_x^2+\zeta)^{-1}(\phi_{0,\sigma_0}^3)  \overline{F} \de x \\
     &=\Re \iu \eu^{\iu \theta_0} \int_\R \phi_{0,\sigma_0}  \overline{F} \de x \\
     &= \Im \langle \eu^{-\iu\theta_0} F(\cdot+\sigma_0),\phi_{0,0}\rangle_{L^2}
\end{align*}
and, hence, obtain 
$$
    \langle L_1 \mathbf{q}_0, \mathbf{q}_0 \rangle_{L^2} = 
    \langle \tilde{L}_1 \tilde{\mathbf{q}}_0, \tilde{\mathbf{q}}_0 \rangle_{L^2} =
    \Im\langle \eu^{-\iu\theta_0} F(\cdot+\sigma_0),\phi_{0,0}\rangle_{L^2}.
$$

We proceed with computing the entries of the matrix $\mathcal{B}$, given by~\eqref{eq:matrices_reduces_evp}. Using integration by parts, we deduce
\begin{align*}
    \langle J\mathbf{p}_1,\mathbf{p}_0\rangle_{L^2} &=
    -\int_\R \frac{x}{2} \phi_{0,\sigma_0} \phi_{0,\sigma_0}' \de x = \frac14 \|\phi_{0,0}\|_{L^2}^2 = \sqrt{\zeta}.
\end{align*}
Moreover, we infer
\begin{align*}
    \langle J\mathbf{p}_1,\mathbf{q}_0\rangle_{L^2} &=
     -\int_\R \frac{x}{2} \phi_{0,\sigma_0}^2 
    \left\langle J 
    \begin{pmatrix}
        \cos(\theta_0) \\ \sin(\theta_0)
    \end{pmatrix},
    \begin{pmatrix}
        \cos(\theta_0) \\ \sin(\theta_0)
    \end{pmatrix}
    \right\rangle_{\C^2} \de x
    = 0,
\end{align*}
and, similarly, $\langle J\mathbf{q}_1,\mathbf{p}_0\rangle_{L^2} = 0$. Finally, we compute
\begin{align*}
    \langle J\mathbf{q}_1,\mathbf{q}_0\rangle_{L^2} &= \langle J \partial_\zeta \boldsymbol{\phi}_{\theta_0,\sigma_0},-J\boldsymbol{\phi}_{\theta_0,\sigma_0} \rangle_{L^2}
    =- \langle \partial_\zeta \phi_{0,\sigma_0},\phi_{0,\sigma_0} \rangle_{L^2}
    = -\frac{1}{2} \partial_\zeta \|\phi_{0,0}\|_{L^2}^2 = - \frac{1}{\sqrt{\zeta}},
\end{align*}
which completes the proof.
\end{proof}

\bibliographystyle{abbrv}
\bibliography{bibliography}

\end{document}